\begin{document}

\title[Functoriality in Morse-Conley-Floer homology]{}

\author[T.O. Rot and R.C.A.M. Vandervorst]{}

\maketitle
\noindent {\huge {\bf  Functoriality and duality in Morse-Conley-Floer homology}} 
\vskip.8cm

\noindent 
T.O. Rot\footnote{Thomas Rot supported by NWO grant 613.001.001.
Email: thomas.rot@uni-koeln.de. } and R.C.A.M. Vandervorst\footnote{Email: vdvorst@few.vu.nl.}%\footnote{\today} 
\vskip.3cm

%\noindent {\it  Department of Mathematics, VU University Amsterdam, The Netherlands.}
\noindent{\it \Small Department of Mathematics, VU University Amsterdam, De Boelelaan 1081a, 1081 HV Amsterdam, the Netherlands.}
\vskip.8cm

\noindent{\begin{center}{\it\large Dedicated to Andrzej Granas.}\end{center}}

\vskip1cm
\begin{sloppypar}

\noindent {\bf Abstract.}
In~\cite{rotvandervorst} a homology theory --Morse-Conley-Floer homology-- for isolated invariant sets of arbitrary flows on finite dimensional manifolds is developed. In this paper we investigate functoriality and duality of this homology theory. As a preliminary we investigate functoriality in Morse homology. Functoriality for Morse homology of closed manifolds is known~\cite{abbondandoloschwarz, aizenbudzapolski,audindamian, kronheimermrowka, schwarz}, but the proofs use isomorphisms to other homology theories. We give direct proofs by analyzing appropriate moduli spaces. The notions of isolated map and flow map allow the results to generalize to local Morse homology and Morse-Conley-Floer homology. We prove Poincar\'e type duality statements for local Morse homology and Morse-Conley-Floer homology. \\

\noindent {\em AMS Subject Class:} 37B30, 37C10, 58E05

\noindent {\em Keywords:} Morse homology, Morse-Conley-Floer homology, flow maps, functoriality.

\textwidth=12.5cm
\vskip1cm

\graphicspath{{pics/}}
\section{Introduction}

In this paper we address functoriality and duality properties of Morse homology, local Morse homology and Morse-Conley-Floer homology. The functoriality of Morse homology on closed manifolds is known~\cite{abbondandoloschwarz, aizenbudzapolski,audindamian, kronheimermrowka, schwarz}, however no proofs are given through the analysis of moduli spaces. This analysis is done in Sections~\ref{sec:chainmap} through~\ref{sec:functoriality}. These sections are of independent interest to the rest of the paper. In Section~\ref{sec:isolation} we discuss isolation properties of maps, which are important for the functoriality in local Morse homology and Morse-Conley-Floer homology. This functoriality is discussed in Sections~\ref{sec:functorialitylocalmorse} and~\ref{sec:functorialitymorseconleyfloer}. In Section~\ref{sec:duality} we discuss Poincar\'e duality in these homology theories. Finally in Appendix~\ref{sec:generic} we prove that the transverse maps that are crucial for defining the induced maps in Morse homology are generic. Below is a detailed description of the results in this paper.

\subsection{Morse homology and local Morse homology}
\label{sec:morselocalmorseintro}
A \emph{Morse datum} is a quadruple $\cQ^\alpha=(M^\alpha,f^\alpha,g^\alpha,\co^\alpha)$, where $M^\alpha$ is  a choice of closed manifold and $(f^\alpha,g^\alpha,\co^\alpha)$ is a Morse-Smale triple on $M^\alpha$. Thus $f^\alpha$ is a Morse function on $M^\alpha$, and $g^\alpha$ is a metric such that the stable and unstable manifolds of $f^\alpha$ intersect transversely. Finally $\co^\alpha$ denotes a choice of orientations of the unstable manifolds. The Morse homology $HM_*(\cQ^\alpha)$ is defined as the homology of the chain complex of $C_*(\cQ^\alpha)$ which is freely generated by the critical points of $f^\alpha$ and graded by their index, with boundary operator $\partial_*(\cQ^\alpha)$ counting connecting orbits of critical points with Morse index difference of $1$ appropriately with sign, cf.~\cite{banyaga,schwarz,weber}. If $\cQ^\beta$ is another choice of Morse datum with $M^\beta=M^\alpha$ there is a canonical isomorphism $\Phi_*^{\beta\alpha}:HM_*(\cQ^\alpha)\rightarrow HM_*(\cQ^\beta)$. The canonical isomorphism is induced by continuation, i.e. a homotopy between the Morse data $\cQ^\alpha$ and $\cQ^\beta$, see for example~\cite{floerintersections,weber}. The Morse homology of the manifold $M=M^\alpha$ is defined by
\begin{equation}
\label{eq:morsehomology}
HM_*(M)=\varprojlim HM_*(\cQ^\alpha),
\end{equation}
where the inverse limit is taken over all Morse data with the canonical isomorphisms\footnote{Here and below we use the inverse limits to capture the full isomorphism class of the isomorphic structures $HM_*(\cQ^\alpha)$.}. The Morse homology $HM_*(M)$ is isomorphic to the singular homology $H_*(M^\alpha)$. 

Important in what follows is that this construction can also be carried out locally\footnote{Here we do not need to assume $M^\alpha$ is closed anymore.}. Recall the following definitions from Conley theory. A subset $S^\alpha\subset M^\alpha$ is called \emph{invariant} for a flow $\phi^\alpha$ if $\phi^\alpha(t,S^\alpha) = S^\alpha$, for all $t\in \mR$. A compact neighborhood $N^\alpha\subset M^\alpha$ is an \emph{isolating neighborhood} for $\phi^\alpha$ if $\Inv(N^\alpha,\phi^\alpha) \subset \Int(N^\alpha)$, where $$
\Inv\bigr(N^\alpha,\phi^\alpha\bigl) = \{x\in N^\alpha~|~\phi^\alpha(t,x) \in N^\alpha,~\forall t\in \mR\},
$$
is called the \emph{maximal invariant} set in $N^\alpha$. An invariant set $S^\alpha$ for which there exists an isolating neighborhood $N^\alpha$  with $S^\alpha=\Inv(N^\alpha,\phi^\alpha)$, is called an \emph{isolated invariant set}. A homotopy of flows is \emph{isolated} if $N^\alpha$ is an isolating neighborhood for each flow in the homotopy. Now suppose that $(f^\alpha,g^\alpha)$ is a Morse-Smale pair on $N^\alpha$, cf.~\cite[Definition~3.5]{rotvandervorst}. Thus $N^\alpha$ is an isolating neighborhood of the gradient flow  $\psi^\alpha$ of $(f^\alpha,g^\alpha)$ such that all critical points of $f^\alpha$ are non-degenerate on $N^\alpha$ and the local stable and unstable manifolds intersect transversely. Then $\cP^\alpha=(M^\alpha,f^\alpha,g^\alpha,N^\alpha,\co^\alpha)$, with $\co^\alpha$ a choice of orientation of the local unstable manifolds is a \emph{local Morse datum}. The local Morse homology $HM_*(\cP^\alpha)$ is defined by a similar counting procedure as Morse homology. However, now only critical points and connecting orbits that are contained in $N^\alpha$ are counted. If the gradient flows associated to two local Morse data $\cP^\alpha$ and $\cP^\beta$ are isolated homotopic\footnote{We use here the fact that two gradient flows are isolated homotopic through gradient flows if and only if they are isolated homotopic through arbitrary flows, see \cite[Proof of Proposition 5.3]{rotvandervorst}.} then there are canonical isomorphisms $\Phi_*^{\beta\alpha}:HM_*(\cP^\alpha)\rightarrow HM_*(\cP^\beta)$ induced by the continuation map. 

Local Morse homology is \emph{not} a topological invariant for $N^\alpha$. It measures dynamical information of the gradient flow of $(f^\alpha,g^\alpha)$. There is still stability under continuation. Given any function $f$ and metric $g$ such that $N$ is an isolating neighborhood of the gradient flow, which we do not assume to be Morse-Smale, we define the local Morse homology of such a triple via
$$
HM_*(f,g,N)=\varprojlim HM_*(\cP^\alpha).
$$
The inverse limit is taken over all local Morse data $\cP^\alpha$ on $N^\alpha=N$ whose gradient flow is isolated homotopic to the gradient flow $\psi$ of $(f,g)$ on $N$. 

If $M$ is a compact manifold, and we take $N=M$, the local Morse homology does not depend on the function and the metric anymore, as all gradient flows are isolated homotopic to each other. In this case the local Morse homology recovers the Morse homology defined in Equation~\bref{eq:morsehomology}. 

Results on Morse homology on compact manifolds with boundary fall in this framework. Let $M$ be a compact manifold with boundary. Assuming that the gradient of a Morse function $f$ is not tangent to the boundary, we can endow the boundary components with collars to obtain a manifold $\widetilde M$ without boundary. The function and metric extend to $\widetilde M$. Then $M\subset \widetilde M$ is an isolating neighborhood of the gradient flow. One can compute this Morse homology in terms of the singular homology of $M$ as $HM_*(f,g,M)\cong H_*(M,\partial M_-)$, where $\partial M_-$ is the union of the boundary components where the gradient of $f$ points outwards. For more details we refer to \cite[Section 1.1]{rotvandervorst}.

\subsection{Morse-Conley-Floer homology}

We recall the definition of Morse-Conley-Floer homology, cf.~\cite{rotvandervorst}, from a slightly different viewpoint. Let $\phi$ be a flow on $M$ and $S$ an isolated invariant set of the flow. A Lyapunov function $f^\alpha_{\phi}$ for $(S,\phi)$ is a function that is constant on $S$ and satisfies $\frac{d}{dt}\bigr|_{t=0}f^\alpha_{\phi}(\phi(t,p))<0$ on $N^\alpha\setminus S$ where $N^\alpha$ an isolating neighborhood for $S^\alpha$. Lyapunov functions always exist for a given isolated invariant set, cf.\ \cite[Proposition 2.6]{rotvandervorst}. We can compute the local Morse homology of a Lyapunov function with respect to the choice of a metric $e^\alpha$ and $N^\alpha$. 

Given another Lyapunov function $f^\beta_\phi$ which satisfies the Lyapunov property on $N^\beta$ and metric $e^\beta$, there is a canonical isomorphism $$\Phi_*^{\beta\alpha}:HM_*(f^\alpha_{\phi},e^\alpha,N^\alpha)\rightarrow HM_*(f^\beta_{\phi},e^\beta,N^\beta),$$ 
induced by continuation, cf.\ \cite[Theorems 4.7 and 4.8]{rotvandervorst}. The Morse-Conley-Floer homology of $(S,\phi)$ is then defined as the inverse limit over all such local Morse homologies
$$
HI_*(S,\phi)=\varprojlim HM_*(f^\alpha_{\phi},e^\alpha,N^\alpha).
$$
Morse-Conley-Floer homology is the local Morse homology of the Lyapunov functions. This definition is equivalent to 
$$
HI_*(S,\phi)=\varprojlim HM_*(\cR^\alpha),
$$
which runs over all Morse-Conley-Floer data $\cR^\alpha=(M,f^\alpha,g^\alpha,N^\alpha,\co^\alpha)$, where $(f^\alpha,g^\alpha,\co^\alpha)$ is a Morse-Smale triple on $N^\alpha$ whose gradient flow is isolated homotopic to the gradient flow of a Lyapunov function $f_\phi^\alpha$ on $N^\alpha$ for some metric $e^\alpha$, with respect to canonical isomorphisms $\Phi^{\beta\alpha}:HM_*(\cR^\alpha)\rightarrow HM_*(\cR^\beta)$. This was the viewpoint of~\cite{rotvandervorst}.

\subsection{Functoriality in Morse homology}

\label{sec:introfunctorialitymorse}
In Sections~\ref{sec:chainmap} through~\ref{sec:functoriality} we study functoriality in Morse homology on closed manifolds. Induced maps between Morse homologies are defined by counting appropriate intersections for \emph{transverse maps}. 

\begin{definition} 
\label{defi:transverse}
Let $h^{\beta\alpha}:M^\alpha\rightarrow M^\beta$ be a smooth map. We say that $h^{\beta\alpha}$ is \emph{transverse (with respect to Morse data $\cQ^\alpha$ and $\cQ^\beta$)}, if for all $x\in \crit f^\alpha$ and $y\in \crit f^\beta$, we have that
$$
h^{\beta\alpha}\bigr|_{W^u(x)}\pitchfork W^s(y). 
$$
The set of transverse maps is denoted by $\scrT(\cQ^\alpha,\cQ^\beta)$. We write $W_{h^{\beta\alpha}}(x,y)=W^u(x)\cap (h^{\beta\alpha})^{-1}(W^s(y))$ for the moduli spaces.
\end{definition}

Given $\cQ^\alpha$ and $\cQ^\beta$, the set of transverse maps $\scrT(\cQ^\alpha,\cQ^\beta)$ is generic, cf.~Theorem~\ref{thm:genericity}. The index of a critical point $x \in \crit f^\alpha$ is denoted $|x|$. The transversality assumption ensures that $W_{h^{\beta\alpha}}(x,y)$ is an oriented manifold of dimension $|x|-|y|$, cf.~Proposition~\ref{prop:manifolds}. Hence, for $|x|=|y|$ we can compute the oriented intersection number $n_{h^{\beta\alpha}}(x,y)$ and define an induced map $h^{\beta\alpha}_*:C_*(\cQ^\alpha)\rightarrow C_*(\cQ^\beta)$ via 
$$h^{\beta\alpha}_*(x)=\sum_{|y|=|x|} n_{h^{\beta\alpha}}(x,y)y.$$ 
In Sections~\ref{sec:chainmap} to~\ref{sec:functoriality} we show that this defines a chain map, that homotopic maps induce chain homotopic maps, and the composition of the induced maps is chain homotopic to the induced map of the composition. This implies that the induced map $h_*^{\beta\alpha}$ descents to a map
$$
h_*^{\beta\alpha}:HM_*(M^\alpha)\rightarrow HM_*(M^\beta),
$$
 between the Morse homologies via counting, which is functorial % $h^{\gamma\beta}_*h^{\beta\alpha}_*=(h^{\gamma\beta}\circ h^{\beta\alpha})_*$ with $\id^{\alpha\alpha}_*=\id$ 
and which does not depend on the homotopy class of the map $h^{\beta\alpha}$. The homotopy invariance and density of transverse maps allows for an extension to all smooth maps. %Therefore
\begin{theorem}
Morse homology is a functor $\HM_*:\MAN\rightarrow \GA$ between the category of smooth manifolds  to the category of graded abelian groups. The functor sends homotopic maps to the same map between the homology groups. 
\end{theorem}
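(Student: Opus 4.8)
The plan is to assemble the functor from the chain-level constructions already announced in Sections~\ref{sec:chainmap}--\ref{sec:functoriality}, and then to check that the inverse-limit packaging in~\bref{eq:morsehomology} makes everything canonical. First I would fix, for each manifold $M^\alpha$, the Morse homology $\HM_*(M^\alpha)=\varprojlim HM_*(\cQ^\alpha)$ together with the canonical isomorphisms $\Phi_*^{\beta\alpha}$; the point is that an element of the inverse limit is a compatible family, so any statement about $\HM_*(M)$ can be proved by choosing a representative Morse datum and checking compatibility with continuation. For the object part there is nothing to do beyond recalling this. For morphisms, given a smooth $h\colon M\to M'$, I would first treat the case where $h$ is transverse with respect to a chosen pair $\cQ^\alpha$ on $M$ and $\cQ^\beta$ on $M'$: then $h_*^{\beta\alpha}\colon C_*(\cQ^\alpha)\to C_*(\cQ^\beta)$ is a chain map (announced above), so it descends to $HM_*(\cQ^\alpha)\to HM_*(\cQ^\beta)$.

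Next I would show this descended map is independent of all choices, so that it defines a map $\HM_*(M)\to\HM_*(M')$. There are three independences to check. (i) Independence of the Morse data on the source and target among transverse pairs: if $h$ is transverse for $(\cQ^\alpha,\cQ^\beta)$ and for $(\cQ^{\alpha'},\cQ^{\beta'})$, one uses the continuation isomorphisms and the (already-announced) fact that continuation maps commute up to chain homotopy with the induced maps — concretely, $\Phi_*^{\beta'\beta}\circ h_*^{\beta\alpha}$ is chain homotopic to $h_*^{\beta'\alpha'}\circ\Phi_*^{\alpha'\alpha}$ — so the two descended maps agree after identifying the homologies via the canonical isomorphisms. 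Hence the family of descended maps, indexed by transverse pairs, is compatible with the inverse-limit structure and defines a single map $\HM_*(h)$. (ii) Independence of the homotopy class of $h$: homotopic maps induce chain homotopic maps (announced), so they induce the same map on homology; this also supplies the last sentence of the theorem. (iii) Extension to arbitrary smooth $h$: by the genericity Theorem~\ref{thm:genericity} every smooth map is homotopic (indeed $C^0$-close, hence homotopic) to a transverse one, so define $\HM_*(h)$ by choosing any transverse representative; by (i) and (ii) the result is well defined.

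It then remains to verify the functor axioms. Identities: $\mathrm{id}_M$ is homotopic to a transverse map, and for a suitable choice the moduli spaces $W_{\mathrm{id}}(x,y)$ with $|x|=|y|$ are singletons exactly when $x=y$ (one can take $\cQ^\beta=\cQ^\alpha$, where $W^u(x)\cap W^s(y)$ is the single critical point when $x=y$ and empty otherwise in the index-difference-zero case, with the correct sign), so $\mathrm{id}_*$ is the identity on each $C_*(\cQ^\alpha)$ and hence on $\HM_*(M)$. Composition: given $M\xrightarrow{h}M'\xrightarrow{k}M''$, choose transverse representatives so that $(k\circ h)$ is also transverse (using genericity to perturb into the required position); then the composition-of-induced-maps result — $k_*\circ h_*$ is chain homotopic to $(k\circ h)_*$ — gives $\HM_*(k)\circ\HM_*(h)=\HM_*(k\circ h)$ on homology, compatibly with the inverse limits by (i). Finally, the target category $\GA$ is appropriate since each $C_*(\cQ^\alpha)$ is a graded abelian group, the boundary and the induced maps are graded (degree $0$), and $\HM_*(M)$ inherits the grading; functoriality in $\GA$ is exactly what (i)--(iii) and the two displayed chain-homotopy facts supply.

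The main obstacle is step (i): making the assignment honestly land in the inverse limit rather than merely on one $HM_*(\cQ^\alpha)$. This requires the commutation of induced maps with continuation isomorphisms, which is itself proved by a moduli-space/cobordism argument parallel to the chain-homotopy proofs of Sections~\ref{sec:chainmap}--\ref{sec:functoriality}; one must also be careful that the various perturbations needed to make $h$, $k$, and $k\circ h$ simultaneously transverse can be chosen within a single homotopy so that the homotopy-invariance statements apply. Once that bookkeeping is in place, the verification of the axioms is formal.
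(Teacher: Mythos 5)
Your proposal is correct and follows essentially the same route as the paper: the theorem is obtained by assembling the chain map property (Proposition~\ref{prop:chain}), the homotopy invariance statement $\Phi^{\delta\beta}_*h^{\beta\alpha}_*\simeq h^{\delta\gamma}_*\Phi^{\gamma\alpha}_*$ (Proposition~\ref{prop:homotopy}), which is exactly your step (i) on compatibility with continuation, the composition chain homotopy (Proposition~\ref{prop:functoriality}), the chain-level identity $\id^{\alpha\alpha}_*=\id$, and genericity of transverse maps (Theorem~\ref{thm:genericity}) to pass to the inverse limit and to arbitrary smooth maps. No gaps.
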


\subsection{Isolation properties of maps}

To study functoriality in local Morse homology and Morse-Conley-Floer homology isolation properties of \emph{maps} are crucial. We identify the notion of \emph{isolated map} and \emph{isolated homotopy} in Definition~\ref{defi:isolatedmap}. Isolated maps are open in the compact-open topology, cf.~Proposition~\ref{prop:isolatedmapsareopen}, but are not necessarily functorial: the composition of two isolated maps need not be isolated.  % The composition of two isolated maps need not be isolated, but the property is open in the compact-open topology, cf.~Proposition~\ref{prop:isolationisopen}. 
An important class of maps that is isolated and form a category are flow maps. Flow maps were introduced by McCord~\cite{mccord} to study functoriality in Conley index theory. 
\begin{definition}
A smooth map $h^{\beta\alpha}:M^\alpha\rightarrow M^\beta$ between manifolds equipped with flows $\phi^\alpha$ and $\phi^\beta$, is a \emph{flow map} if it is proper and equivariant. Thus preimages of compact sets are compact and
$$h^{\beta\alpha}(\phi^\alpha(t,p))=\phi^\beta(t,h^{\beta\alpha}(p)),\quad \text{ for all }t\in \mR \quad\text{and  }p\in M^\alpha.
$$
\label{defi:flowmap}
\end{definition}

The isolation properties of flow maps are given in Proposition~\ref{prop:flowmaps}. If $h^{\beta\alpha}$ is a flow map, and $N^\beta$ is an isolating neighborhood, then $N^\alpha=(h^{\beta\alpha})^{-1}(N^\beta)$ is an isolating neighborhood. Moreover $h^{\beta\alpha}$ is isolated with respect to these isolating neighborhoods. Similar statments hold for compositions of flow maps.

\subsection{Functoriality in Local Morse homology}

In Section~\ref{sec:functorialitylocalmorse} we define induced maps in local Morse homology. Due to the local nature of the homology, not all maps are admissible and the notion of isolated map becomes crucial. The maps are computed by the same counting procedure, but now done locally. We sum up the functorial properties of local Morse homology from Propositions~\ref{prop:localmorseinduced} and~\ref{prop:localmorsefunctorial}:
\begin{enumerate}
\item[(i)] An isolated transverse map induces a chain map, hence descents to a map between the local Morse homologies.
\item[(ii)] An isolated homotopy between transverse maps induces a chain homotopic map.
\item[(iii)] If $h^{\gamma\beta}$ and $h^{\beta\alpha}$ and $h^{\gamma\beta}\circ h^{\beta\alpha}$ are transverse maps such that $h^{\gamma\beta}\circ \psi^\beta_R \circ h^{\beta\alpha}$ is an isolated map for all $R>0$, then $(h^{\gamma\beta}\circ h^{\beta\alpha})_*$ and $h^{\gamma\beta}_* h^{\beta\alpha}_*$ are chain homotopic.
\end{enumerate}

Consider the following category of isolated invariant sets of gradient flows.

\begin{definition}
The category of \emph{isolated invariant sets of gradient flows}, denoted by $\IGF$, has as objects the quadruples $(M,f,g,N)$ consisting of smooth functions $f$ on $M$ and  metrics $g$ such that the sets $N$ are an isolating neighborhoods for associated the gradient flows. A morphism $h^{\beta\alpha}:(M^\alpha,f^\alpha,g^\alpha,N^\alpha)\rightarrow (M^\beta,f^\beta,g^\beta,N^\beta)$, is a map that is isolated homotopic to a flow map $\widetilde h^{\beta\alpha}$ with $N^\alpha=(\widetilde h^{\beta\alpha})^{-1}(N^\beta)$.
\end{definition}

These morphisms are then perturbed to transverse maps, from which the induced map can be computed, cf.~Propositions~\ref{prop:degenerateinduced} and~\ref{prop:degeneratefunctoriality}. 

\begin{theorem}
Local Morse homology is a covariant functor $HM_*:\IGF\rightarrow \GA$. The functor is constant on isolated homotopy classes of maps. 
\label{thm:localmorsefunctoriality}
\end{theorem}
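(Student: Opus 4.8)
The plan is to verify the three axioms of a covariant functor --- well-definedness on objects, compatibility with identities, and compatibility with composition --- using the machinery already assembled. On objects there is nothing to check beyond recalling that for $(M,f,g,N)\in\IGF$ the local Morse homology $HM_*(f,g,N)$ is defined (Section~\ref{sec:morselocalmorseintro}), independently of Morse--Smale-ness, via the inverse limit over isolated-homotopic local Morse data. For a morphism $h^{\beta\alpha}$, by definition it is isolated homotopic to a flow map $\widetilde h^{\beta\alpha}$ with $N^\alpha=(\widetilde h^{\beta\alpha})^{-1}(N^\beta)$; by Proposition~\ref{prop:flowmaps} the flow map $\widetilde h^{\beta\alpha}$ is then isolated with respect to $N^\alpha,N^\beta$, and by Proposition~\ref{prop:isolatedmapsareopen} isolated maps are open in the compact-open topology, so we may perturb $\widetilde h^{\beta\alpha}$ within its isolated homotopy class to a map that is transverse with respect to chosen local Morse data $\cP^\alpha,\cP^\beta$ (genericity of transverse maps relative to given data, by Theorem~\ref{thm:genericity}, combined with density and openness). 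By item~(i) of the Propositions summarized above such an isolated transverse map induces a chain map, hence a map $h^{\beta\alpha}_*\colon HM_*(\cP^\alpha)\to HM_*(\cP^\beta)$, and by item~(ii) the induced map is independent of the isolated homotopy used and of the chosen transverse perturbation; passing to the inverse limits over local Morse data (compatibly with the canonical continuation isomorphisms $\Phi_*^{\beta\alpha}$) yields a well-defined $HM_*(h^{\beta\alpha})\colon HM_*(f^\alpha,g^\alpha,N^\alpha)\to HM_*(f^\beta,g^\beta,N^\beta)$ depending only on the isolated homotopy class of the morphism. This also immediately gives that the functor is constant on isolated homotopy classes.

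Next I would check $HM_*(\id)=\id$. The identity map $M\to M$ is a flow map with $N=\id^{-1}(N)$, so it is an admissible morphism; a transverse perturbation of it, computed on a single local Morse datum, counts the intersections $W^u(x)\cap W^s(y)$ for $|x|=|y|$, and --- exactly as in the closed-manifold case treated in Sections~\ref{sec:chainmap}--\ref{sec:functoriality}, localized to $N$ --- the only contributions come from $x=y$ with intersection number $1$, so the induced chain map is the identity on $C_*(\cP^\alpha)$ and hence the identity on homology. Alternatively one can appeal directly to the corresponding statement in the closed case together with the fact that the identity is isolated-homotopic to itself, so its induced map is the canonical isomorphism, which is the identity after passing to the inverse limit.

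For composition, suppose $h^{\beta\alpha}\colon(M^\alpha,\dots)\to(M^\beta,\dots)$ and $h^{\gamma\beta}\colon(M^\beta,\dots)\to(M^\gamma,\dots)$ are morphisms, isolated-homotopic to flow maps $\widetilde h^{\beta\alpha},\widetilde h^{\gamma\beta}$ with the stated preimage conditions on isolating neighborhoods. By Proposition~\ref{prop:flowmaps} (the "compositions of flow maps" part) the composite $\widetilde h^{\gamma\beta}\circ\widetilde h^{\beta\alpha}$ is again a flow map with $N^\alpha=(\widetilde h^{\gamma\beta}\circ\widetilde h^{\beta\alpha})^{-1}(N^\gamma)$, so $h^{\gamma\beta}\circ h^{\beta\alpha}$ is a legitimate morphism in $\IGF$ and its induced map is defined. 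To identify it with $HM_*(h^{\gamma\beta})\circ HM_*(h^{\beta\alpha})$ I would invoke item~(iii): one must verify the hypothesis that $\widetilde h^{\gamma\beta}\circ\psi^\beta_R\circ\widetilde h^{\beta\alpha}$ is isolated for all $R>0$, where $\psi^\beta_R$ is the time-$R$ gradient flow on $M^\beta$. This is where flow maps are essential and this is the step I expect to be the main obstacle: one uses equivariance of $\widetilde h^{\gamma\beta}$ to rewrite $\widetilde h^{\gamma\beta}\circ\psi^\beta_R=\psi^\gamma_R\circ\widetilde h^{\gamma\beta}$ on the relevant neighborhoods (after arranging, via isolated homotopy through gradient flows as in~\cite[Proof of Proposition 5.3]{rotvandervorst}, that the flows in question are gradient flows), so that the composite becomes $\psi^\gamma_R\circ(\widetilde h^{\gamma\beta}\circ\widetilde h^{\beta\alpha})$, and then properness plus the preimage identity $N^\alpha=(\widetilde h^{\gamma\beta}\circ\widetilde h^{\beta\alpha})^{-1}(N^\gamma)$ forces any trajectory segment that stays in $N^\alpha$ to map into $N^\gamma$, giving isolation uniformly in $R$ --- this is precisely the content of Propositions~\ref{prop:degenerateinduced} and~\ref{prop:degeneratefunctoriality}. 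Granting that, item~(iii) yields chain homotopy of $(h^{\gamma\beta}\circ h^{\beta\alpha})_*$ with $h^{\gamma\beta}_*h^{\beta\alpha}_*$ at the chain level, hence equality on $HM_*(\cP^\bullet)$, and passing to the inverse limit over local Morse data gives $HM_*(h^{\gamma\beta}\circ h^{\beta\alpha})=HM_*(h^{\gamma\beta})\circ HM_*(h^{\beta\alpha})$. Combining the three verifications establishes that $HM_*\colon\IGF\to\GA$ is a covariant functor, constant on isolated homotopy classes.
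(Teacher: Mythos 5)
Your proposal is correct and follows essentially the same route as the paper: the theorem is obtained by assembling Propositions~\ref{prop:flowmaps}, \ref{prop:localmorseinduced}, \ref{prop:degenerateinduced}, \ref{prop:localmorsefunctorial} and~\ref{prop:degeneratefunctoriality}, with the key composition step being exactly the isolation of $\widetilde h^{\gamma\beta}\circ\psi^\beta_R\circ\widetilde h^{\beta\alpha}$ for all $R$ via equivariance of flow maps. Your write-up is in fact more explicit than the paper's one-line deduction, but the decomposition into well-definedness, identity, and composition, and the lemmas invoked at each step, coincide with the paper's.
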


The local Morse homology functor generalizes the Morse homology functor. Any map $h^{\beta\alpha}:M^\alpha\rightarrow M^\beta$ between closed manifolds equipped with flows is isolated homotopic to a flow map with $M^\alpha=N^\alpha=(h^{\beta\alpha})^{-1}(N^\beta)=(h^{\beta\alpha})^{-1}(M^\beta)$, since the flows on both manifolds are isolated homotopic to the constant flow, and any map is equivariant with respect to constant flows. 

\subsection{Functoriality in Morse-Conley-Floer homology}

Functoriality in Morse-Conley-Floer homology now follows from the results on local Morse homology, cf.~Theorem~\ref{thm:morseconleyfloerfunctoriality} by establishing appropriate isolated homotopies. 
%We have the following category of isolated invariant sets
\begin{definition}
The \emph{category of isolated invariant sets} $\IS$ has as objects triples $(M,\phi,S)$ of a manifold, flow and isolated invariant set. A morphism $h^{\beta\alpha}:(M^\alpha,\phi^\alpha,S^\alpha)\rightarrow (M^\beta,\phi^\beta,S^\beta)$ is a map that is isolated homotopic for some choice of isolating neighborhoods $N^\alpha,N^\beta$ to a flow map $\widetilde h^{\beta\alpha}$ such that $N^\alpha=(\widetilde h^{\beta\alpha})^{-1}(N^\beta)$. 
\end{definition}

%The main theorem of this chapter is.

\begin{theorem}
Morse-Conley-Floer homology is a covariant functor $HI_*:\IS\rightarrow \GA$. The functor is constant on the isolated homotopy classes of maps and flows. 
\end{theorem}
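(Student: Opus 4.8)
The plan is to reduce the statement about Morse-Conley-Floer homology to the already-established functoriality of local Morse homology (Theorem~\ref{thm:localmorsefunctoriality}), exactly as the definition $HI_*(S,\phi)=\varprojlim HM_*(f^\alpha_\phi,e^\alpha,N^\alpha)$ over Lyapunov data suggests. Given a morphism $h^{\beta\alpha}:(M^\alpha,\phi^\alpha,S^\alpha)\rightarrow(M^\beta,\phi^\beta,S^\beta)$ in $\IS$, by definition there are isolating neighborhoods $N^\alpha,N^\beta$ and a flow map $\widetilde h^{\beta\alpha}$, isolated homotopic to $h^{\beta\alpha}$, with $N^\alpha=(\widetilde h^{\beta\alpha})^{-1}(N^\beta)$. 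The key point is that I may then choose \emph{compatible} Lyapunov data: pick any Lyapunov function $f^\beta_{\phi^\beta}$ for $(S^\beta,\phi^\beta)$ on $N^\beta$ with metric $e^\beta$. Since $\widetilde h^{\beta\alpha}$ is equivariant, the pullback controls the gradient flow on $N^\alpha$, and using Proposition~\ref{prop:flowmaps} together with the existence of Lyapunov functions (\cite[Proposition 2.6]{rotvandervorst}) I can produce a Lyapunov function $f^\alpha_{\phi^\alpha}$ on $N^\alpha$ and metric $e^\alpha$ such that the pair $(M^\alpha,f^\alpha_{\phi^\alpha},e^\alpha,N^\alpha)\rightarrow(M^\beta,f^\beta_{\phi^\beta},e^\beta,N^\beta)$ is a morphism in $\IGF$, i.e. $\widetilde h^{\beta\alpha}$ (or a further isolated perturbation of it) is isolated homotopic to a flow map for the gradient flows with the preimage condition on the isolating neighborhoods.

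With such a compatible choice in hand, local Morse homology functoriality gives a map $(\widetilde h^{\beta\alpha})_*:HM_*(f^\alpha_{\phi^\alpha},e^\alpha,N^\alpha)\rightarrow HM_*(f^\beta_{\phi^\beta},e^\beta,N^\beta)$. I then need to check that this map is compatible with the canonical isomorphisms $\Phi_*^{\beta\alpha}$ relating different Lyapunov data (\cite[Theorems 4.7 and 4.8]{rotvandervorst}), so that it descends to a well-defined map on the inverse limits $HI_*(S^\alpha,\phi^\alpha)\rightarrow HI_*(S^\beta,\phi^\beta)$. This compatibility is exactly a naturality square, and it follows from statement (ii) of the local Morse homology functoriality (isolated homotopies induce chain homotopic maps): changing the Lyapunov function and metric is realized by an isolated homotopy, and the induced maps intertwine the continuation isomorphisms up to chain homotopy. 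Independence of the map $(\widetilde h^{\beta\alpha})_*$ from the choice of flow-map representative within its isolated homotopy class — hence well-definedness on morphisms of $\IS$ — is again statement (ii) applied to the isolated homotopy between $h^{\beta\alpha}$ and $\widetilde h^{\beta\alpha}$; this also yields that the functor is constant on isolated homotopy classes.

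Functoriality itself — $\mathrm{id}_* = \mathrm{id}$ and $(h^{\gamma\beta}\circ h^{\beta\alpha})_* = h^{\gamma\beta}_*\circ h^{\beta\alpha}_*$ — follows from the corresponding properties in $\IGF$, since the composition of flow maps is again a flow map with the correct preimage relation on isolating neighborhoods (Proposition~\ref{prop:flowmaps}), so the composite lands in $\IGF$ and statement (iii) applies; the identity is already a flow map. Finally, invariance under isolated homotopies of the \emph{flow} $\phi$ itself (so that $HI_*(S,\phi)$ only depends on the isolated homotopy class of $\phi$) is inherited from the fact that an isolated homotopy of flows yields, via Lyapunov functions, an isolated homotopy of the associated gradient flows, to which the continuation invariance of local Morse homology applies.

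I expect the main obstacle to be the \emph{compatible choice of Lyapunov data}: one must arrange simultaneously that $f^\alpha_{\phi^\alpha}$ is Lyapunov for $(S^\alpha,\phi^\alpha)$ on $N^\alpha$, that the relevant map is isolated homotopic to a flow map between the \emph{gradient} flows (not the original flows) with $N^\alpha=(\widetilde h^{\beta\alpha})^{-1}(N^\beta)$, and that the transversality perturbations needed to actually compute the induced map can be carried out without destroying isolation. Threading all three conditions through is where the genuine work lies; once a compatible pair is constructed the rest is a formal diagram chase through the inverse limits using the already-proven local results.
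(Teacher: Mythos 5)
Your overall architecture---pull the problem down to local Morse homology, build the induced map on a compatible choice of Lyapunov data, and check naturality against the continuation isomorphisms so that it descends to the inverse limits---is the same as the paper's. But the two points you flag as ``where the genuine work lies'' are exactly the points the paper has to argue nontrivially, and the route you sketch for each would not close them.

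First, the compatible Lyapunov data. The paper does not obtain $f^\alpha_{\phi^\alpha}$ from the abstract existence of Lyapunov functions; it takes the pullback $f^\alpha:=f^\beta\circ \widetilde h^{\beta\alpha}$ and proves (Proposition~\ref{prop:lyapunovpullback}) that this is a Lyapunov function on $N^\alpha=(\widetilde h^{\beta\alpha})^{-1}(N^\beta)$ and, crucially, that $\widetilde h^{\beta\alpha}$ is an \emph{isolated map} for the associated gradient flows $\psi^\alpha,\psi^\beta$ for any choice of metrics, via the monotone function $b_p$ built from the Lyapunov property. Note that $\widetilde h^{\beta\alpha}$ is equivariant for $\phi^\alpha,\phi^\beta$ but is in general \emph{not} a flow map for the gradient flows, and there is no reason it should be isolated homotopic to one; so your plan of exhibiting a morphism in $\IGF$ and invoking Theorem~\ref{thm:localmorsefunctoriality} does not go through as stated. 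The paper instead applies Proposition~\ref{prop:degenerateinduced}, which only requires isolatedness. Second, functoriality: Proposition~\ref{prop:localmorsefunctorial} requires that $h^{\gamma\beta}\circ\psi^\beta_R\circ h^{\beta\alpha}$ be isolated for \emph{all} $R\geq 0$, where $\psi^\beta$ is the \emph{gradient} flow of the pulled-back Lyapunov function, not the flow $\phi^\beta$ for which your maps are equivariant. Proposition~\ref{prop:flowmaps} gives this only with $\phi^\beta_R$ interposed, and Figure~\ref{fig:counterexample} shows the condition can genuinely fail for isolated maps, so it cannot be waved through; the paper closes it with the piecewise-monotone function $b_{p,R}$, showing $S_{h^{\gamma\beta}\circ\psi^\beta_R\circ h^{\beta\alpha}}\subset S^\alpha$. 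A smaller omission: when comparing two Lyapunov functions with \emph{different} isolating neighborhoods $N^\beta$ and $N^\delta$, there is no isolated homotopy connecting them directly; the paper first passes to $N^\beta\cap N^\delta$ (and $N^\alpha\cap N^\gamma$) and checks that $h^{\beta\alpha}$ remains isolated there. Your naturality square needs this intermediate step.
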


\subsection{Duality in Morse-Conley-Floer Homology}
\label{subsec:dual}
In Section~\ref{sec:duality} we prove a Poincar\'e type duality theorem for Morse-Conley-Floer (co)homology. The theorem expresses a duality between the Morse-Conley-Floer homology of an orientable isolated invariant set $S$ of a flow and the Morse-Conley-Floer cohomology of $S$ seen as an isolated invariant set of the reverse flow defined by $\phi^{-1}(t,x)=\phi(-t,x)$. The following duality statement resembles an analogous theorem for the Conley index, due to McCord~\cite{McCord:1992vy}.

\begin{theorem}
Let $S$ be an oriented isolated invariant set of a flow $\phi$. Then
there are Poincar\'e duality isomorphisms
$$
PD_k:HI_k(S,\phi)\rightarrow HI^{m-k}(S,\phi^{-1}).
$$
\end{theorem}

\subsection*{Acknowledgments}

We would like to express our gratitude to Alberto Abbondandolo, Peter Albers, Patrick Hafkenscheid and Federica Pasquotto for helpful discussions concerning this work.

\section{Chain maps in Morse homology on closed manifolds}

\label{sec:chainmap}

In Sections~\ref{sec:chainmap} through~\ref{sec:functoriality}, which discuss functoriality for Morse homology we assume that the base manifolds are closed. 

\subsection{The moduli space $W_{h^{\beta\alpha}}(x,y)$}
For a transverse map, see Definition~\ref{defi:transverse}, the moduli spaces $W_{h^{\beta\alpha}}(x,y)=W^u(x)\cap (h^{\beta\alpha})^{-1}(W^s(y))$ are smooth oriented manifolds. 

\begin{proposition}
Let $h^{\beta\alpha}\in \scrT(\cQ^\alpha,\cQ^\beta)$. For all $x\in\crit f^\alpha$ and $y\in \crit f^\beta$, the space $W_{h^{\beta\alpha}}(x,y)$ is an oriented submanifold of dimension $|x|-|y|$.
\label{prop:manifolds}
\end{proposition}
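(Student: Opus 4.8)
The plan is to realize $W_{h^{\beta\alpha}}(x,y)$ as a transverse preimage and read off its dimension and orientation from standard differential-topology constructions. The key structural fact is that, on a closed manifold with a Morse-Smale pair, each unstable manifold $W^u(x)$ is an embedded open disk of dimension $|x|$ and each stable manifold $W^s(y)$ is an embedded submanifold of codimension $|y|$. First I would restrict attention to the smooth map $h^{\beta\alpha}\big|_{W^u(x)}: W^u(x)\to M^\beta$. The transversality hypothesis in Definition~\ref{defi:transverse} says precisely that this restricted map is transverse to $W^s(y)\subset M^\beta$. By the standard preimage theorem, $\bigl(h^{\beta\alpha}\big|_{W^u(x)}\bigr)^{-1}(W^s(y)) = W^u(x)\cap (h^{\beta\alpha})^{-1}(W^s(y)) = W_{h^{\beta\alpha}}(x,y)$ is therefore a smooth submanifold of $W^u(x)$, with codimension in $W^u(x)$ equal to the codimension of $W^s(y)$ in $M^\beta$, namely $|y|$. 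Hence $\dim W_{h^{\beta\alpha}}(x,y) = |x| - |y|$, which gives the dimension count.

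**Orientations:**

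For the orientation I would proceed as follows. The choice of orientation datum $\co^\alpha$ orients every $W^u(x)$, so $W^u(x)$ is an oriented manifold. The Morse-Smale condition on $M^\beta$, together with $\co^\beta$, orients $W^u(y)$ for every critical point $y$; via the chosen Riemannian metric $g^\beta$ the normal bundle of $W^s(y)$ in $M^\beta$ is canonically identified with (a bundle isomorphic to) $TW^u(y)$ along $W^s(y)\cap W^u(y)$, and more to the point one equips $W^s(y)$ with a coorientation (orientation of its normal bundle $\nu(W^s(y))$) induced by $\co^\beta$. Then the transverse preimage construction pulls this coorientation back: for a transverse map $F=h^{\beta\alpha}\big|_{W^u(x)}$, the normal bundle of $F^{-1}(W^s(y))$ in $W^u(x)$ is isomorphic, via $dF$, to the pullback $F^*\nu(W^s(y))$, hence cooriented. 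Combining this induced coorientation of $W_{h^{\beta\alpha}}(x,y)$ inside the oriented manifold $W^u(x)$ yields an orientation of $W_{h^{\beta\alpha}}(x,y)$ itself (orientation of ambient space plus coorientation of submanifold gives orientation of submanifold). I would fix, once and for all, a sign convention (e.g. \emph{orientation of normal bundle followed by orientation of submanifold equals orientation of ambient space}) so that all later gluing and composition arguments are consistent; the actual content is just bookkeeping.

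**Main obstacle and remaining checks:**

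The dimension count is immediate from transversality; the genuine work — and the step I expect to be the main obstacle — is making the orientation conventions precise and self-consistent, because Morse homology orientation conventions are notoriously delicate and the later sections (chain map property, homotopy invariance, composition) will depend on exactly how signs are assigned here. In particular I would be careful about: (a) whether one coorients $W^s(y)$ via $\co^\beta$ on $W^u(y)$ and the metric $g^\beta$, and that this is independent of auxiliary choices up to the conventions already fixed in \cite{rotvandervorst}; (b) the compatibility of the pullback coorientation under $dF$ with the ambient orientation of $W^u(x)$; and (c) that in the special case $h^{\beta\alpha} = \mathrm{id}$ this reduces to the usual orientation on $W^u(x)\cap W^s(y)$ used to define $\partial_*(\cQ^\alpha)$, so that the induced map of the identity is the identity chain map. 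I do not expect to need compactness of $W_{h^{\beta\alpha}}(x,y)$ here — that, and the analysis of its boundary, belongs to the chain-map proof in a later section — so the proof of Proposition~\ref{prop:manifolds} itself is confined to the local statement that it is a smooth oriented manifold of the stated dimension.
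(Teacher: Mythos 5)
Your proposal is correct and follows essentially the same route as the paper: apply the preimage theorem to $h^{\beta\alpha}\bigr|_{W^u(x)}$ transverse to $W^s(y)$ to get a submanifold of $W^u(x)$ of codimension $|y|$, and orient it by pulling back the coorientation of $W^s(y)$ (the normal bundle being oriented via $N_yW^s(y)\cong T_yW^u(y)$, the choice $\co^\beta$, and contractibility of $W^s(y)$) into the oriented ambient $W^u(x)$. The paper phrases this via the exact sequence $0\rightarrow TW_{h^{\beta\alpha}}(x,y)\rightarrow TW^u(x)\rightarrow NW^s(y)\rightarrow 0$ with a fixed fiber-first convention, which is exactly the bookkeeping you flag as the remaining check.
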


\begin{proof}[Proof of Proposition~\ref{prop:manifolds}.]
Because $h^{\beta\alpha}$ restricted to $W^u(x)$ is transverse to $W^s(y)$, Theorem 3.3. of~\cite[Theorem 3.3, page 22]{Hirsch} implies that $W_{h^{\beta\alpha}}(x,y)=\left(h^{\beta\alpha}\bigr|_{W^u(x)}\right)^{-1}(W^s(y))$ is an oriented submanifold of $W^u(x)$. The orientation is induced by the exact sequence of vector bundles\footnote{We employ the fiber first convention in the orientation of exact sequences of vector bundles, cf.~\cite{Hirsch,kronheimermrowka}}
$$
\xymatrix{0\ar[r]&TW_{h^{\beta\alpha}}(x,y)\ar[r]&TW^u(x)\ar[r]^{dh^{\beta\alpha}}&NW^s(y)\ar[r]&0},
$$
where the latter is the normal bundle of $W^s(y)$. The normal bundle of $W^s(y)$ is oriented, because $W^s(y)$ is contractible, and $N_yW^s(y)\cong T_yW^u(y)$ is oriented by the choice $\co^\beta$. The codimension of $W_{h^{\beta\alpha}}(x,y)$ in $W^u(x)$ equals the codimension of $W^s(y)$ in $M^{\beta}$. Thus
$$
|x|-\dim W_{h^{\beta\alpha}}(x,y)=\codim W^s(y)=|y|,
$$
from which the proposition follows.
\end{proof}

If $|x|=|y|$ the space $W_{h^{\beta\alpha}}(x,y)$ is zero dimensional and consist of a finite number of points carrying orientation signs $\pm 1$. Set $n_{h^{\beta\alpha}}(x,y)$ as the sum of these. We define the degree zero map $h^{\beta\alpha}_*:C_*(\cQ^\alpha)\rightarrow C_*(\cQ^\beta)$ by the formula
$$
h^{\beta\alpha}_*(x)=\sum_{|y|=|x|}n_{h^{\beta\alpha}}(x,y)y.
$$
Through compactness and gluing analysis of the moduli space $W_{h^{\beta\alpha}}(x,y)$ and related moduli spaces we prove the following properties of the induced map
\begin{enumerate}
\item[(i)] The induced identity $\id_*^{\alpha\alpha}$ is the identity on chain level. 
\item[(ii)] We show that $h_*^{\beta\alpha}$ is a chain map, i.e.
$$
h_{k-1}^{\beta\alpha}\partial^\alpha_k=\partial^\beta_kh^{\beta\alpha}_{k},\quad\text{for all}\quad k.
$$
\item[(iii)] In Section~\ref{sec:homotopy} we study homotopy invariance. Suppose that $\cQ^\gamma,\cQ^\delta$ are other Morse data on $M^\alpha=M^\gamma$ and $M^\beta=M^\delta$. Suppose $h^{\delta\gamma}\in \scrT(\cQ^\gamma,\cQ^\delta)$ is homotopic to $h^{\beta\alpha}\in \scrT(\cQ^\alpha,\cQ^\beta)$, then $h_*^{\delta\gamma}\Phi^{\gamma\alpha}_*$ and $\Phi^{\delta\beta}_*h_*^{\beta\alpha}$ are chain homotopic. That is, there exists a degree $+1$ map $P_*^{\delta\alpha}:C_*(\cQ^\alpha)\rightarrow C_*(\cQ^\delta)$, such that
$$
\Phi_k^{\delta\beta}h^{\beta\alpha}_k-h^{\delta\gamma}_k\Phi_k^{\gamma\alpha}=-\partial^\delta_{k+1}P_k^{\delta\alpha}-P_{k-1}^{\delta\alpha}\partial_k^{\alpha},\quad\text{for all}\quad k.
$$
Here $\Phi^{\beta\alpha}_*$ denotes the isomorphism induced by continuation.
\item[(iv)] In Section~\ref{sec:functoriality} we study compositions. If $h^{\gamma\beta}\in \scrT(\cQ^\beta,\cQ^\gamma)$ is such that $h^{\gamma\beta}\circ h^{\beta\alpha}\in \scrT(\cQ^\alpha,\cQ^\gamma)$ then $h_*^{\gamma\beta}h_*^{\beta\alpha}$ and $\left(h^{\gamma\beta}\circ h^{\beta\alpha}\right)_*$ are chain homotopic, i.e.~there is a degree $+1$ map $P_*^{\gamma\alpha}:C_*(\cQ^\alpha)\rightarrow C_*(\cQ^\gamma)$, such that
$$
h_k^{\gamma\beta} h_k^{\beta\alpha}-\left(h^{\gamma\beta}\circ h^{\beta\alpha}\right)_k=P_{k-1}^{\gamma\alpha}\partial^\alpha_k+\partial_{k+1}^\gamma P_k^{\gamma\alpha},\quad\text{for all}\quad k.
$$
\end{enumerate}

\subsection{Compactness of $W_{h^{\beta\alpha}}(x,y')$ with $|x|=|y'|+1$}

The chain map property (ii) holds by the compactness properties of the moduli space $W_{h^{\beta\alpha}}(x,y')$ with $|x|=|y'|+1$. This space is a one dimensional manifold, but it is not necessarily compact. The non-compactness is due to breaking of orbits in the domain and in the codomain, which is the content of Proposition~\ref{prop:compactness}. In Figure~\ref{fig:compactness1} we depicted this breaking process in the domain. Recall that we denote by $W(x,y)=W^u(x)\cap W^s(y)$ the space of parameterized orbits, and by $M(x,y)=W(x,y)/\mR$ the space of unparameterized orbits. The points where breaking can occur in the domain are counted by $M(x,y)\times W_{h^{\beta\alpha}}(y,y')$, with $|y|=|y'|$ and the points where breaking occurs in the codomain are counted by $W_{h^{\beta\alpha}}(x,x')\times M(x',y')$, with $|x|=|x'|$. The space $W_{h^{\beta\alpha}}(x,y')$ can be compactified by gluing in these broken orbits, cf.~Proposition~\ref{prop:gluing2}. Proposition~\ref{prop:compactification} then states that the resulting object is a one dimensional compact manifold with boundary. By counting the boundary components appropriately with sign the chain map property is obtained, cf.~Proposition~\ref{prop:chain}.

\begin{figure}
\def\svgwidth{.9\textwidth}\begin{center}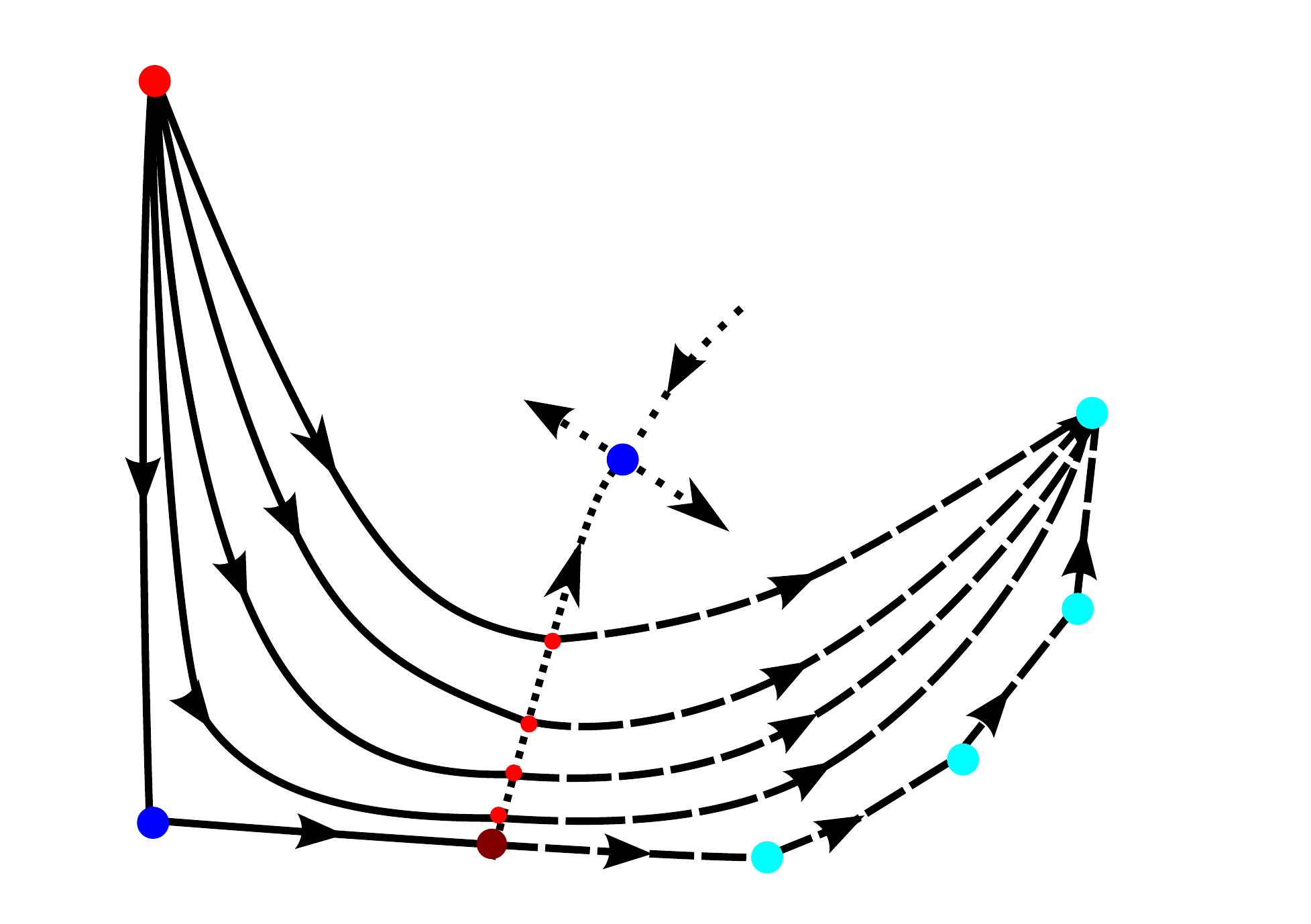\end{center}
\caption{One of the possible compactness failures of $W_{h^{\beta\alpha}}(x,y')$ with $|x|=|y'|+1$. The sequence $p_k\in W_{h^{\beta\alpha}}(x,y')$ converges to a point $p\in W_{h^{\beta\alpha}}(y,y')$ with $|y|=|y'|$. The orbits through $p_k$ break to $(u_1,\ldots,u_l)$ with $u_1\in M(x,y)$.}
\label{fig:compactness1}
\end{figure}
Since $M^{\alpha}$ is assumed to be compact, and $W_{h^{\beta\alpha}}(x,y')\subset M^\alpha$, any sequence $p_k\in W_{h^{\beta\alpha}}(x,y')$ has a subsequence converging to some point $p\in M^{\alpha}$. In the next proposition, see also Figure~\ref{fig:compactness1}, it is stated to which points such sequences converge. 

\begin{proposition}[Compactness]
Let $h^{\beta\alpha}\in \scrT(\cQ^\alpha,\cQ^\beta)$, $x\in\crit f^\alpha$ and $y'\in \crit f^\beta$ with $|x|=|y'|+1$. Let $p_k\in W_{h^{\beta\alpha}}(x,y')$ be a sequence such that $p_k\rightarrow p$ in $M^{\alpha}$. Then either one of the following is true
\begin{itemize}
\item[(i)] $p\in W_{h^{\beta\alpha}}(x,y')$.
\item[(ii)] There exists $y\in\crit f^\alpha$, with $|y|=|y'|$ such that $p\in W_{h^{\beta\alpha}}(y,y')$. The orbits through $p_k$ break to orbits $(u_1,\ldots,u_l)$ as $k\rightarrow \infty$ with $u_1\in M(x,y)$.
\item[(iii)]There exists $x'\in\crit f^\beta$, with $|x'|=|x|$ such that $p\in W_{h^{\beta\alpha}}(x,x')$. The orbits through $h^{\beta\alpha}(p_k)$ break to the orbits $(u_1',\ldots,u_{l}')$ as $k\rightarrow \infty$ with $u_{l}'\in M(x',y')$.
\end{itemize} 
\label{prop:compactness}
\end{proposition}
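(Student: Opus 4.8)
The plan is to reduce the claim to two separate applications of the classical broken trajectory compactness theorem for Morse--Smale gradient flows (cf.~\cite{banyaga,schwarz,weber}) --- one for the flow $\psi^\alpha$ on $M^\alpha$, one for $\psi^\beta$ on $M^\beta$ --- and then to combine the two outcomes with the dimension formula of Proposition~\ref{prop:manifolds}. Both ambient manifolds are compact by the standing assumption of this section, so no extra compactness input is needed.

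I would first analyze the domain. Since $p_k\in W_{h^{\beta\alpha}}(x,y')\subset W^u(x)$, the limit satisfies $p\in\overline{W^u(x)}$. As $(f^\alpha,g^\alpha)$ is Morse--Smale, $\overline{W^u(x)}$ is a union of unstable manifolds $W^u(y)$ of critical points $y$ that are joined to $x$ by a (possibly broken) gradient orbit; since these unstable manifolds partition $M^\alpha$, $p$ lies in a \emph{unique} such $W^u(y)$, and $|y|\le|x|$ with equality precisely when $y=x$. Parameterizing the $\psi^\alpha$-orbits through the $p_k$ so that $p_k$ sits at time zero and passing to a subsequence, the compactness theorem furthermore produces a limiting broken configuration: a broken connecting orbit from $x$ to $y$, with pieces $u_1\in M(x,z_1),\dots,u_l\in M(z_{l-1},y)$, followed by the $\psi^\alpha$-orbit through $p$ inside $W^u(y)$. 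When $y=x$ there is no breaking and $p\in W^u(x)$.

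The same argument in the codomain uses that $h^{\beta\alpha}(p_k)\to h^{\beta\alpha}(p)$ by continuity, together with $h^{\beta\alpha}(p_k)\in W^s(y')$: it gives a unique $x'\in\crit f^\beta$ with $h^{\beta\alpha}(p)\in W^s(x')$ and $|x'|\ge|y'|$ (equality iff $x'=y'$), and the $\psi^\beta$-orbits through the $h^{\beta\alpha}(p_k)$ converge to the $\psi^\beta$-orbit through $h^{\beta\alpha}(p)$ followed by a broken connecting orbit from $x'$ to $y'$, whose last piece lies in $M(\,\cdot\,,y')$. Now $p\in W^u(y)\cap(h^{\beta\alpha})^{-1}(W^s(x'))=W_{h^{\beta\alpha}}(y,x')$, and since $h^{\beta\alpha}\in\scrT(\cQ^\alpha,\cQ^\beta)$, Proposition~\ref{prop:manifolds} forces this space to be a manifold of dimension $|y|-|x'|\ge0$. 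With the two inequalities above and $|x|=|y'|+1$ this yields $|y'|\le|x'|\le|y|\le|x|=|y'|+1$, which leaves exactly three cases. If $|x'|=|y'|$ and $|y|=|x|$, then $x'=y'$, $y=x$, no breaking occurs and $p\in W_{h^{\beta\alpha}}(x,y')$ --- case~(i). If $|x'|=|y'|$ and $|y|=|y'|$, then $x'=y'$ and only the domain breaks; the chain from $x$ to $y$ has index drop $|x|-|y|=1$, forcing a single piece $u_1\in M(x,y)$, and $p\in W_{h^{\beta\alpha}}(y,y')$ --- case~(ii). If $|x'|=|x|=|y'|+1$, then $|y|\ge|x'|=|x|$ forces $y=x$ and only the codomain breaks; the chain from $x'$ to $y'$ has index drop $1$, forcing a single piece $u_l'\in M(x',y')$, and $p\in W_{h^{\beta\alpha}}(x,x')$ --- case~(iii).

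I expect the genuine work to be concentrated in the first two steps: passing rigorously from convergence of the \emph{points} $p_k$ to convergence of the associated gradient half-orbits to broken orbits, and checking that the limiting broken orbit closes up through $p$ (respectively $h^{\beta\alpha}(p)$) in the stated way. This is standard Morse theory once the orbits are normalized, but it is where the analytic content sits; the index bookkeeping that separates the three cases is then essentially immediate.
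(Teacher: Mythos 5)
Your proposal is correct and follows essentially the same route as the paper's proof: apply the Broken Orbit Lemma separately in the domain (to $p_k\in W^u(x)$) and in the codomain (to $h^{\beta\alpha}(p_k)\in W^s(y')$) after passing to subsequences, then use the transversality dimension formula $\dim W_{h^{\beta\alpha}}(y,x')=|y|-|x'|\ge 0$ together with the index inequalities to exclude simultaneous breaking and isolate the three cases. Your chain $|y'|\le|x'|\le|y|\le|x|=|y'|+1$ is just a cleaner packaging of the same index bookkeeping the paper performs.
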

\begin{proof}
The spaces $W_{h^{\beta\alpha}}(x,y')$, $W_{h^{\beta\alpha}}(y,y')$ and $W_{h^{\beta\alpha}}(x,x')$ are disjoint hence the three possibilities cannot occur at the same time. We choose a subsequence such that $p_k\in W(x,b)$ for some fixed $b\in \crit f^\alpha$, which is possible by the fact that there are only a finite number of such spaces by compactness. The Broken Orbit Lemma, see for example~\cite[Lemma 2.5]{braam}, states that $p\in W(y,a)$ for some $y,a\in \crit f^\alpha$ with $|y|\leq |x|$, and equality if and only if $x=y$. Similarly, choosing a subsubsequence if necessary, we also assume that $h^{\beta\alpha}(p_k)\in W(a',y')$, for some fixed $a'\in\crit f^\beta$. Then $h^{\beta\alpha}(p)\in W(b',x')$, for some $b',x'\in \crit f^\beta$ with $|x'|\geq |y'|$, with equality if and only if $x'=y'$. 

Now if $p\not \in W^u(x)$ and $h^{\beta\alpha}(p)\not \in W^s(y')$, then we have the impossibility that $p\in W_{h^{\beta\alpha}}(y,x')$, with $|y|<|x|$ and $|x'|>|y'|$. Since by transversality
$$
\dim W_{h^{\beta\alpha}}(y,x')=|y|-|x'|\leq (|x|-1)-(|y'|-1)\leq-1.
$$
Assuming that $p\not\in W_{h^{\beta\alpha}}(x,y')$, only two possibilities remain. If $p\in W_{h^{\beta\alpha}}(x,x')$, with $|x'|>|y|=|x|-1$, then from $\dim W_{h^{\beta\alpha}}(x,x')\geq 0$ it follows that $|x'|=|x|$. If $h^{\beta\alpha}(p)\in W_{h^{\beta\alpha}}(y,y')$, with $|y|<|x|$, then $\dim W_{h^{\beta\alpha}}(y,y')\geq 0$ gives that $|y'|=|y|$, see also~Figure~\ref{fig:compactness1}. The claim about the breaking orbits is the content of the Broken Orbit Lemma.
\end{proof}

The proposition generalizes to higher index difference moduli spaces. We do not need this, and this would clutter the notation without a significant gain. 

\subsection{Gluing the ends of $W_{h^{\beta\alpha}}(x,y')$, with $|x|=|y'|+1$}

We compactify $W_{h^{\beta\alpha}}(x,y')$ by gluing in the broken orbits described in  Proposition~\ref{prop:compactness}. The following lemma is the technical heart of the standard gluing construction in Morse homology, see also Figure~\ref{fig:gluing}. We single this out because we have to construct several gluing maps, which all use this lemma. For a pair $(f,g)$ of a function and a metric we denote by $\psi$ its negative gradient flow. 
\begin{figure}
\def\svgwidth{.4\textwidth}\begin{center}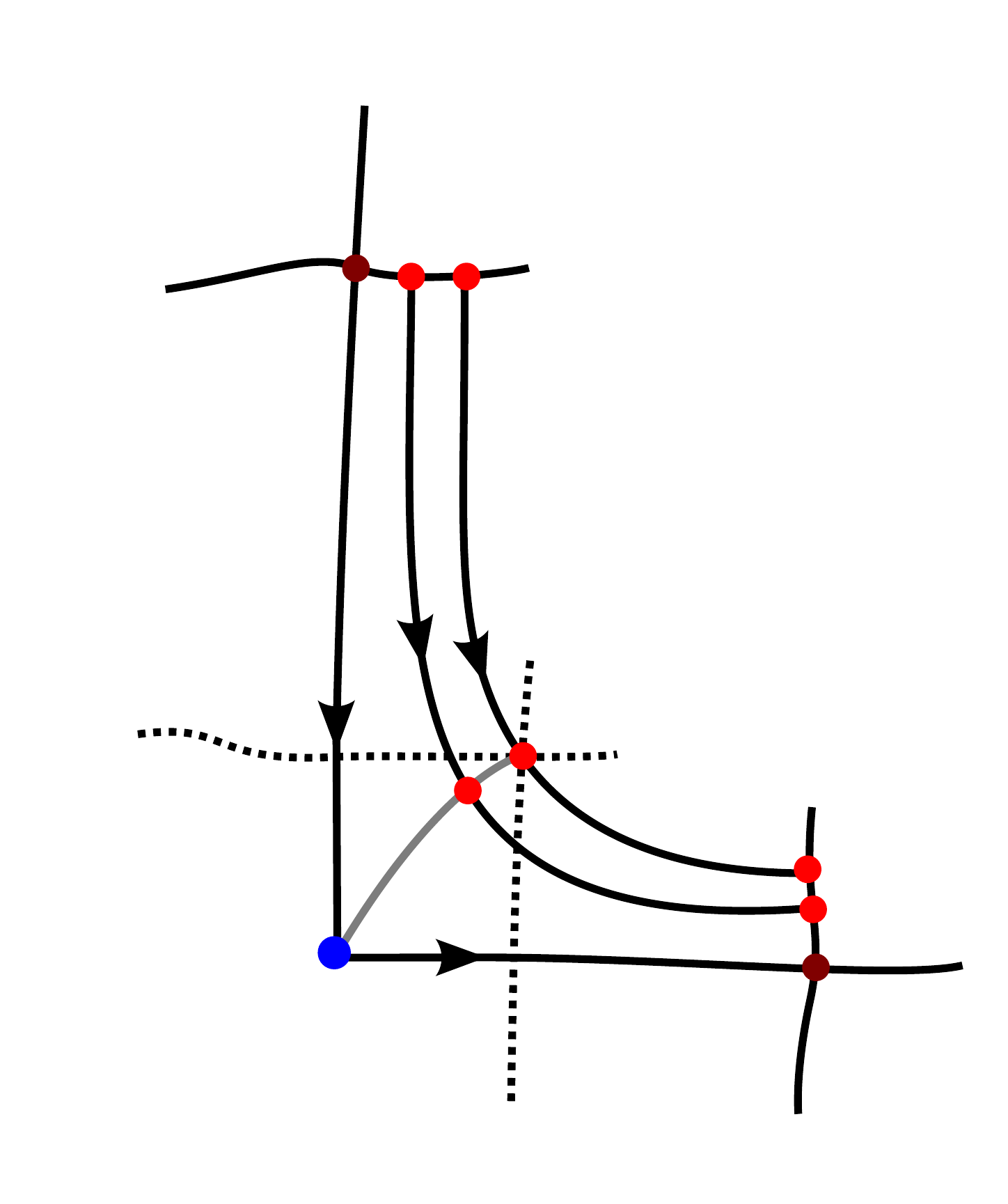\end{center}
\caption{The content of Lemma~\ref{lem:gluing} is depicted. Discs transverse to the stable and unstable manifolds must intersect if they are flowed in forwards and backwards time. This intersection point is used to define the gluing map. }
\label{fig:gluing}
\end{figure}
\begin{lemma}[Gluing] Let $f:M\rightarrow \mR$ be a Morse function, $g$ a metric, and $y\in\crit f$. Write $m=\dim M$. Suppose $D^{|y|},E^{m-|y|}$ are embedded discs of dimension $|y|$ and $m-|y|$ with $D^{|y|}\pitchfork W^s(y)$ and $E^{m-|y|}\pitchfork W^u(y)$. Assume that each intersection only consists of a single point and write $u\in D^{|y|}\cap W^s(y)$ and $v\in E^{m-|y|}\cap W^u(y)$. Then there exists an $R_0>0$ and an injective map $\rho:[R_0,\infty)\rightarrow M$, such that $g(\frac {d}{dR} \rho(R),-\nabla_g f)\not=0$, and $\psi(-R,\rho(R))\in D^{|y|}$, and $\psi(R,\rho(R))\in E^{m-|y|}$. We have the limits
$$
\lim_{R\rightarrow \infty} \rho(R)=y,\qquad \lim_{R\rightarrow \infty} \psi(-R,\rho(R))=u,\qquad \lim_{R\rightarrow \infty} \psi(R,\rho(R))=v.
$$
Finally there exists smaller discs $D'\subset D^{|y|}$ and $E'\subset E^{m-|y|}$ such that no orbit through $D'\setminus \bigcup_{R\in [R_0,\infty)}\psi(-R,\rho(R))$ intersects $E'$.
\label{lem:gluing}
\end{lemma}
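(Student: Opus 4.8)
The plan is to reduce the global statement to a local model near the critical point $y$ and then to run the standard ``flow until you hit the disc'' argument, upgraded to produce a smooth one-parameter family with the stated transversality and separation properties. First I would choose Morse coordinates in a neighborhood $U$ of $y$ in which $f(w)=f(y)-|w^-|^2+|w^+|^2$ and the metric $g$ is the Euclidean one, so that the negative gradient flow is $\psi(t,(w^-,w^+))=(e^{t}w^-,e^{-t}w^+)$, with $W^u(y)\cap U=\{w^+=0\}$ and $W^s(y)\cap U=\{w^-=0\}$. The transversality hypotheses $D^{|y|}\pitchfork W^s(y)$ and $E^{m-|y|}\pitchfork W^u(y)$, together with the dimension count $|y|+(m-|y|)=m$ and the single-point intersection assumption, mean that after flowing $D^{|y|}$ forward for a large time $T_1$ and $E^{m-|y|}$ backward for a large time $T_2$, the flowed discs $\psi(T_1,D^{|y|})$ and $\psi(-T_2,E^{m-|y|})$ both meet $U$ and, inside $U$, are graphs over the coordinate directions: $\psi(T_1,D^{|y|})\cap U$ is a graph $w^+=\varphi(w^-)$ with $\varphi(0)=0$, $d\varphi(0)=0$, and $\psi(-T_2,E^{m-|y|})\cap U$ is a graph $w^-=\eta(w^+)$ with $\eta(0)=0$, $d\eta(0)=0$. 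This uses the $\lambda$-lemma (inclination lemma): the forward flow of a disc transverse to $W^s(y)$ converges in $C^1$ on compact sets to $W^u(y)$, and symmetrically for the backward flow of a disc transverse to $W^u(y)$.

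Next, for each large $R$ I want a single point $q(R)\in U$ lying on the time-$2R$ flow segment connecting a point of $D^{|y|}$ to a point of $E^{m-|y|}$; concretely I seek $(w^-,w^+)$ with $(e^{-R}w^-, e^{R}w^+)\in \psi(T_1,D^{|y|})$ and $(e^{R}w^-,e^{-R}w^+)\in\psi(-T_2,E^{m-|y|})$ — i.e. the point $(w^-,w^+)$, flowed backward by $R+T_1$, lands on $D^{|y|}$, and flowed forward by $R+T_2$, lands on $E^{m-|y|}$. Writing this out, the conditions become $e^{R}w^+=\varphi(e^{-R}w^-)$ and $e^{R}w^-=\eta(e^{-R}w^+)$; substituting one into the other gives a contraction fixed-point equation for $w^-$ (the composition $\eta\circ(\text{scaling})\circ\varphi$ has derivative going to $0$ like $e^{-2R}$ near the origin), so for $R\geq R_0$ there is a unique small solution $(w^-(R),w^+(R))$, depending smoothly on $R$ by the implicit function theorem. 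I then define $\rho(R)=\psi(-R, q(R))$ pushed back to live on the original geometry, or more precisely set $\rho(R)$ to be the point of this orbit at the ``midpoint'' time; the limits $\rho(R)\to y$, $\psi(-R,\rho(R))\to u$, $\psi(R,\rho(R))\to v$ follow because $w^{\mp}(R)\to 0$ and the endpoints of the flow segment converge to the (unique) intersection points $u\in D^{|y|}\cap W^s(y)$ and $v\in E^{m-|y|}\cap W^u(y)$ by continuity of the $\lambda$-lemma convergence. Injectivity of $\rho$ and the non-tangency condition $g(\tfrac{d}{dR}\rho(R),-\nabla_g f)\neq 0$ come from differentiating the explicit expressions: in the linear model $\tfrac{d}{dR}$ of the midpoint is dominated by the rescaling terms, which have a definite nonzero component along the gradient, and this survives the $C^1$-small perturbation coming from $\varphi,\eta$.

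Finally, for the last clause I would shrink $D^{|y|}$ to $D'$ and $E^{m-|y|}$ to $E'$ so small that the only orbits from $D'$ reaching $E'$ are exactly those in the constructed family. This again is a $\lambda$-lemma / uniqueness statement: the map sending a point of $D^{|y|}$ near $u$ to the time it takes its forward orbit to first return to $U$, and then to where that orbit exits $U$, is controlled, and an orbit starting on $D'$ can only hit $E'$ if it passes through $U$ close to $W^u(y)\cap W^s(y)=\{y\}$; the fixed-point equation above then shows the starting point and the exit point are each determined by $R$, so the orbit is one of the $\rho(R)$-orbits. The one point demanding real care — and the step I expect to be the main obstacle — is making the $\lambda$-lemma graph estimates \emph{uniform} and $C^1$ enough to simultaneously (a) run the contraction argument for $q(R)$ with smooth dependence on $R$, (b) extract the nonvanishing of the gradient component of $\tfrac{d}{dR}\rho(R)$, and (c) get the separation statement; all three rest on the same quantitative inclination-lemma estimate, so I would isolate that estimate first and phrase everything downstream in terms of it. I may also need to note that, away from the Morse chart, flowing $D^{|y|}$ forward and $E^{m-|y|}$ backward for the fixed times $T_1,T_2$ is a diffeomorphism onto its image, so that shrinking in $U$ pulls back to shrinking of the original discs, and the non-tangency at $\rho(R)$ is preserved under the finite-time flow since $g(\tfrac{d}{dR}(\cdot),-\nabla_g f)\neq 0$ is an open condition transported by $d\psi$.
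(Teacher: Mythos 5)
Your proposal follows essentially the same route as the paper's (sketched) proof: flow the two discs toward the critical point, invoke the $\lambda$-lemma to make them $C^1$-close to the local unstable and stable manifolds, and then apply a contraction/Banach fixed point argument to produce the unique intersection point $\rho(R)$, with the limits, injectivity, non-tangency and separation statements read off from the construction. The extra detail you supply (Morse coordinates, the explicit fixed-point equation, smooth dependence on $R$) is a legitimate elaboration of the same argument, which the paper delegates to Weber's Theorem 3.9.
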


\begin{proof}
We only sketch the proof, see also Figure~\ref{fig:gluing}. More details can be found in the proof of Theorem 3.9 in~\cite{weber}. Let $B^u\subset W^u(y)$, $B^s\subset W^s(y)$ be closed balls containing $y$. Write $D_R^{|y|}=\psi(R,D^{|y|})$ and $E_{-R}^{m-|y|}=\psi(-R,E^{m-|y|})$. Since the discs are transverse to the stable and unstable manifolds of $y$, the $\lambda$-Lemma, cf.~\cite[Chapter 2, Lemma 7.2]{palismelo}, gives that for all $t$ large, smaller discs  $D_R'\subset D_R^{|y|}$ and $E'_{-R}\subset E_{-R}^{m-|y|}$ are $\epsilon-C^1$ close to $B^u$ and $B^s$ respectively. It follows, through an application of the Banach Fixed Point Theorem, that there exist a $R_0>0$ such that $D'_R$ and $E_{-R}'$ intersect in a single point for each $R>R_0$ sufficiently large.  Set $\rho(R)=D'_R\cap E_{-R}'$. The properties of $\rho$ follow from the construction. 
\end{proof}

To prove that $\partial^2=0$ in Morse homology gluing maps are needed to compactify appropriate moduli spaces and these are constructed using the Morse-Smale condition and previous lemma as follows. Let $x,y$ and $z$ be critical points with $|x|=|y|-1=|z|-2$ and assume $M(x,y)$ and $M(y,z)$ are non-empty. Then the Morse-Smale condition gives that there exists a disc $D^{|y|}$ in $W^u(x)$ transverse to an orbit in $M(x,y)$. Because of transversality, one can also choose a disc $E^{m-|y|}$ in $W^s(z)$ transverse to an orbit in $M(y,z)$. The gluing map $\#:M(x,y)\times [R_0,\infty)\times M(y,z)\rightarrow M(x,z)$, is given by mapping $R$ to the orbit through $\rho(R)$ as in~Figure~\ref{fig:gluing}. We now use similar ideas to compactify $W_{h^{\beta\alpha}}(x,y')$ with $|x|=|y'|+1$. 

\begin{proposition}
Assume $h^{\beta\alpha}\in \scrT(\cQ^\alpha,\cQ^\beta)$. Then for critical points $x,y\in \crit f^\alpha$ and $x',y'\in \crit f^\beta$, with $|x|=|x'|=|y|+1=|y'|+1$, there exists $R_0>0$ and gluing embeddings
\begin{align*}
\#^1:&M(x,y)\times [R_0,\infty)\times W_{h^{\beta\alpha}}(y,y')\rightarrow W_{h^{\beta\alpha}}(x,y')\\
\#^2:&W_{h^{\beta\alpha}}(x,x')\times [R_0,\infty)\times M(x',y')\rightarrow W_{h^{\beta\alpha}}(x,y')
\end{align*}
Moreover, if $p_k\in W_{h^{\beta\alpha}}(x,y')$ converges to $p\in W_{h^{\beta\alpha}}(y,y')$, and the orbits through $p_k$ break as $(u_1,\ldots,u_l)$ with $u_1\in M(x,y)$ then $p_k$ is in the image of $\#^1$ for $k$ sufficiently large. Analogously, if $p_k\in W_{h^{\beta\alpha}}(x,y')$ converges to $p\in W_{h^{\beta\alpha}}(x,x')$, and the orbits through $h^{\beta\alpha}(p_k)$ break to $(u_1,\ldots,u_l)$ with $u_l'\in M(x',y)$, then $p_k$ is in the image of $\#^2$ for $k$ sufficiently large.
\label{prop:gluing2}
\end{proposition}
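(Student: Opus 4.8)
\textbf{Proof plan for Proposition~\ref{prop:gluing2}.}

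The plan is to construct the two gluing maps $\#^1$ and $\#^2$ by a direct application of the Gluing Lemma (Lemma~\ref{lem:gluing}), treating the domain factor $M(x,y)$ and the codomain factor $M(x',y')$ separately, and then to verify the surjectivity-near-the-boundary statements using the compactness analysis of Proposition~\ref{prop:compactness}. I will describe $\#^1$ in detail; the construction of $\#^2$ is entirely analogous, carried out in the codomain $M^\beta$ with the map $h^{\beta\alpha}$ pulling back the picture.

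For $\#^1$: fix $[u_1]\in M(x,y)$ and $q\in W_{h^{\beta\alpha}}(y,y')$. Since $(f^\alpha,g^\alpha)$ is Morse--Smale on $M^\alpha$, we may choose an embedded disc $D^{|y|}\subset W^u(x)$ of dimension $|y|$ that is transverse to $W^s(y)$ and meets it in the single point $u$ lying on the orbit $u_1$. Because $q\in W^u(y)$ and $W_{h^{\beta\alpha}}(y,y')$ is a submanifold of $W^u(y)$ of dimension $|y|-|y'|$, I choose $E^{m-|y|}\subset M^\alpha$, an embedded disc transverse to $W^u(y)$ at $q$ whose intersection with $W^u(y)$ is a single point, and moreover chosen so that the disc $E'\subset E^{m-|y|}$ produced by Lemma~\ref{lem:gluing} is swept out by the local stable/unstable coordinates in a way that keeps track of the $W_{h^{\beta\alpha}}$-directions; more precisely, after shrinking I arrange that $E^{m-|y|}\cap (h^{\beta\alpha})^{-1}(W^s(y'))$ is a submanifold of the expected dimension $m-|y|-(|y|-|y'|) = m-2|y|+|y'|$ passing through $q$. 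Lemma~\ref{lem:gluing} then produces $R_0>0$ and an injective path $\rho(R)$ with $\psi^\alpha(-R,\rho(R))\in D^{|y|}\subset W^u(x)$, so $\rho(R)\in W^u(x)$ for all $R\geq R_0$, and with $\psi^\alpha(R,\rho(R))\to q$; since $W_{h^{\beta\alpha}}(y,y')$ is flow-invariant inside the relevant local picture (orbits in $W^u(y)$ mapped by $h^{\beta\alpha}$ into $W^s(y')$ flow to orbits with the same property), the point $\rho(R)$ satisfies $h^{\beta\alpha}(\rho(R))\in W^s(y')$ as well, hence $\rho(R)\in W_{h^{\beta\alpha}}(x,y')$. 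Letting $[u_1]$ and $q$ vary, and using smooth dependence of the fixed-point construction in Lemma~\ref{lem:gluing} on the data, assembles $\#^1$ into a smooth embedding on $M(x,y)\times[R_0,\infty)\times W_{h^{\beta\alpha}}(y,y')$; injectivity follows from injectivity of $\rho$ in $R$ together with the fact that the limits as $R\to\infty$ recover $([u_1],q)$, so distinct boundary data give distinct ends.

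For the surjectivity-near-the-boundary claim, suppose $p_k\in W_{h^{\beta\alpha}}(x,y')$ converges to $p\in W_{h^{\beta\alpha}}(y,y')$ with the orbits through $p_k$ breaking as $(u_1,\dots,u_l)$, $u_1\in M(x,y)$. By the Broken Orbit Lemma the configuration $(u_1,\dots,u_l)$ is in fact a single broken orbit from $x$ to $y$, and since $\dim M(x,y)=0$ this forces $l=1$ modulo reparametrization of intermediate pieces; the relevant boundary datum is $([u_1],p)\in M(x,y)\times W_{h^{\beta\alpha}}(y,y')$. Now $p_k\to p$ together with the breaking means that for $k$ large the orbit through $p_k$ passes $\epsilon$-close to the disc $D^{|y|}$ near $u$ and that $\psi^\alpha(T_k,p_k)\to p$ for suitable $T_k\to\infty$; comparing with the local model, $p_k$ lies on the orbit through some $\rho(R_k)$ with $R_k\to\infty$, i.e. $p_k=\#^1([u_1],R_k,p)$ for $k$ large. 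This is where I invoke the "no orbit through $D'\setminus\bigcup_R \psi^\alpha(-R,\rho(R))$ meets $E'$" clause of Lemma~\ref{lem:gluing}: it guarantees that the end of $W_{h^{\beta\alpha}}(x,y')$ accumulating on $([u_1],p)$ is \emph{exactly} the image of $\#^1$, with no spurious nearby trajectories, so the tail of $(p_k)$ is captured. The argument for $\#^2$ is the mirror image in $M^\beta$: one glues $M(x',y')$ onto the end of $W_{h^{\beta\alpha}}(x,y')$ where $h^{\beta\alpha}(p_k)$ breaks, using a disc in $W^s(y')$ transverse to the breaking orbit and pulling the $\lambda$-lemma picture back through $h^{\beta\alpha}$, which is legitimate because $h^{\beta\alpha}$ restricted to $W^u(x)$ is transverse to $W^s(y')$.

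The main obstacle is bookkeeping the $W_{h^{\beta\alpha}}$-directions through the gluing construction: Lemma~\ref{lem:gluing} as stated only asserts that a single point $\rho(R)$ exists and that its forward and backward flow lands in the chosen discs, whereas here I need the glued point to lie in the codimension-$|y'|$ (resp. appropriate codimension) subvariety cut out by $(h^{\beta\alpha})^{-1}(W^s(y'))$, and I need the whole family to vary smoothly over $W_{h^{\beta\alpha}}(y,y')$ (resp. $M(x',y')$). This is handled by choosing the auxiliary discs \emph{adapted} to these subvarieties --- i.e. so that $D^{|y|}$, $E^{m-|y|}$ restrict to discs inside, or transverse to, the relevant preimages --- and by noting that the relation $h^{\beta\alpha}(\psi^\alpha(t,\cdot))$ lands in $W^s(y')$ is preserved under the flow, so the fixed point produced by the Banach contraction automatically inherits the incidence condition; smoothness in the boundary parameters is then the standard smooth-dependence statement for the contraction, exactly as in the proof of Theorem~3.9 in~\cite{weber}.
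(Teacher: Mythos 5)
Your overall strategy (apply Lemma~\ref{lem:gluing} twice with suitably chosen discs, once in $M^\alpha$ and once in $M^\beta$) is the same as the paper's, but the construction of $\#^1$ has a genuine gap in how the disc $E^{m^\alpha-|y|}$ is chosen and in the subsequent ``flow-invariance'' step. You only arrange that $E^{m^\alpha-|y|}\cap (h^{\beta\alpha})^{-1}(W^s(y'))$ is a submanifold through $q$ (and your expected-dimension count $m-|y|-(|y|-|y'|)$ is off: the codimension of $(h^{\beta\alpha})^{-1}(W^s(y'))$ is $|y'|$, not $|y|-|y'|$). With such an $E$, Lemma~\ref{lem:gluing} only guarantees that $\psi^\alpha(R,\rho(R))$ lies somewhere on $E'$; there is no reason it lies on the codimension-$|y'|$ slice $E\cap(h^{\beta\alpha})^{-1}(W^s(y'))$, so the incidence condition $h^{\beta\alpha}(\cdot)\in W^s(y')$ is simply not produced by the fixed-point construction. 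The flow-invariance argument you then invoke cannot repair this: the set $\{p\,|\,h^{\beta\alpha}(p)\in W^s(y')\}$ is \emph{not} invariant under $\psi^\alpha$, because $h^{\beta\alpha}$ does not intertwine $\psi^\alpha$ with $\psi^\beta$ (the point $\rho(R)$ is also not in $W^u(y)$, so invariance of $W_{h^{\beta\alpha}}(y,y')$ inside $W^u(y)$ is irrelevant). The same false invariance claim reappears in your closing paragraph (``the relation $h^{\beta\alpha}(\psi^\alpha(t,\cdot))$ lands in $W^s(y')$ is preserved under the flow'').

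The repair is to choose $E^{m^\alpha-|y|}$ entirely \emph{inside} $(h^{\beta\alpha})^{-1}(W^s(y'))$. This is possible: since $|y|=|y'|$ and $h^{\beta\alpha}\bigr|_{W^u(y)}\pitchfork W^s(y')$ at $v$, the composition of $dh^{\beta\alpha}$ with the projection onto $N_{h^{\beta\alpha}(v)}W^s(y')$ is already surjective on $T_vW^u(y)$, so $(h^{\beta\alpha})^{-1}(W^s(y'))$ is locally a submanifold of dimension $m^\alpha-|y|$ meeting $W^u(y)$ transversely at $v$; take $E$ to be a small disc in it. Then define the glued point as $\psi^\alpha(R,\rho(R))\in E$ (not as $\rho(R)$): its image under $h^{\beta\alpha}$ lies in $W^s(y')$ by the choice of $E$, and it lies in $W^u(x)$ because $\psi^\alpha(-2R,\,\cdot\,)$ sends it into $D^{|y|}\subset W^u(x)$ and $W^u(x)$ \emph{is} $\psi^\alpha$-invariant. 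A parallel slip occurs in your sketch of $\#^2$: to pull the glued point in $M^\beta$ back to $W^u(x)$ you need $h^{\beta\alpha}$ to restrict to a diffeomorphism from a disc $D^{|x'|}\subset W^u(x)$ onto a disc transverse to $W^s(x')$; this follows from transversality to $W^s(x')$ together with $|x|=|x'|$ (forcing $dh^{\beta\alpha}$ to be injective on $T_uW^u(x)$), not from transversality to $W^s(y')$ as you state.
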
 
\begin{proof}
Let $u \in W(x,y)$, and $v\in W_{h^{\beta\alpha}}(y,y')$. Since $(f^\alpha,g^\alpha)$ is a Morse-Smale pair, we can choose a disc $D^{|y|}$ in $W^u(x)$ through $u$ and transverse to $W^s(y)$. Analogously, because $h^{\beta\alpha}$ is transverse, we can find a disc $E^{m^\alpha-|y|}$ in $M^\alpha$ through $v$, and transverse to $W^u(y)$, with $h^{\beta\alpha}(E^{m^\alpha-|y|})\subset W^s(y')$. Lemma~\ref{lem:gluing} provides us with a map $\rho:[R_0,\infty)\rightarrow M^\alpha$. Denote by $\gamma_u\in M(x,y)$ the orbit through $u$, and set $\gamma_u\#^1_Rv=\psi^\alpha(R,\rho(R))$. Then $\gamma_u\#^1_Rv\in E^{m^\alpha-|y|}$ and $\psi^\alpha(-2R,\gamma_u\#^1_Rv)\in D^{|y|}\subset W^u(x)$, hence $\gamma_u\#^1_Rv\in W_{h^{\beta\alpha}}(x,y')$. The properties of the map $\rho$ in Lemma~\ref{lem:gluing} directly give the properties of $\#^1$. 

The construction of $\#^2$ is similar. Let $u\in W_{h^{\beta\alpha}}(x,x')$ and $v\in W(x',y')$. By transversality of $h^{\alpha\beta}$ we can choose a disc $D^{|x'|}\subset W^{u}(x)$ which $h^{\beta\alpha}$ maps bijectively to $h^{\beta\alpha}(D^{|x'|})$, and whose image $h^{\beta\alpha}(D)$ is transverse to $W^s(x')$. Choose a disc $E^{m^\beta-|x'|}\subset W^s(y')$ with $v\in E^{m^\beta-|x'|}$ which is transverse to $W^u(x')$. Now Lemma~\ref{lem:gluing} provides us with a map $\rho:[R_0,\infty)\rightarrow M^\beta$. Set $u\#^2_R\gamma_v=(h^{\beta\alpha})^{-1}(\psi^\beta(-R,\rho(R)))$, which is well defined since $h^{\beta\alpha}|_{D^{|x'|}}$ is a diffeomorphism onto $h^{\beta\alpha}(D^{|x'|})$. The properties of $\#^2$ follow.
\end{proof}

\subsection{Orientations.} After a choice of orientations of the unstable manifolds, the moduli spaces $W_{h^{\beta\alpha}}(x,y')$ and $W(x,y)$ and $M(x,y)$ carry induced orientations cf.~Proposition~\ref{prop:manifolds} and~\cite[Proposition 3.10]{weber}. We show that the gluing map $\#^1$ is compatible with the induced orientations, while $\#^2$ reverses the orientations. 

Consider the notation of the proof of Proposition~\ref{prop:gluing2}. Denote by $W(x,y)\Bigr|_u$ the connected component of $W(x,y)$ containing $u\in W(x,y)$, with similar notation for other moduli spaces. If $|x|=|y|+1$ the moduli space $W(x,y)$ is one dimensional and can be oriented by the negative gradient vector field. For $u\in W(x,y)$ write $[\dot u]$ for this induced orientation of $W(x,y)\Bigr|_u$. The orientation of $W(x,y)\Bigr|_u$ induced by the choice of $\co^\alpha$ is denoted $a[\dot u]$, with $a=\pm1$. Then we denote for the orientation of $W_{h^{\beta\alpha}}(y,y')$, induced by $\co^\alpha$ and $\co^\beta$, by $b$ for $b=\pm 1$. The gluing map $\#^1$ induces a map of orientations 
$$\sigma^1:\Or(W(x,y)\Bigr|_u)\times\Or(W_{h^{\beta\alpha}}(y,y') \Bigr|_v)\rightarrow \Or(W_{h^{\beta\alpha}}(x,y') \Bigr|_{\gamma_u\#^1_{R_0}v}),$$ 
via
$$
\sigma^1(a[\dot u],b)=ab\left[\frac{d}{dR}\Bigr|_{R=R_0} \gamma_u\#^1_Rv\right].
$$
Similarly the gluing map $\#^2$ induces a map of orientations $$\sigma^1:\Or(W_{h^{\beta\alpha}}(x,x')\Bigr|_u)\times \Or(W(x',y') \Bigr|_v)\rightarrow\Or(W_{h^{\beta\alpha}}(x,y')\Bigr|_{u\#^2_{R_0}\gamma_v})$$ 
given by
$$
\sigma^2(c,d[\dot v])=cd \left[ \frac{d}{dR}\Bigr|_{R=R_0}u\#^2_R\gamma_v\right],\qquad\text{with}\qquad c,d=\pm1.
$$

\begin{figure}
\def\svgwidth{.6\textwidth}\begin{center}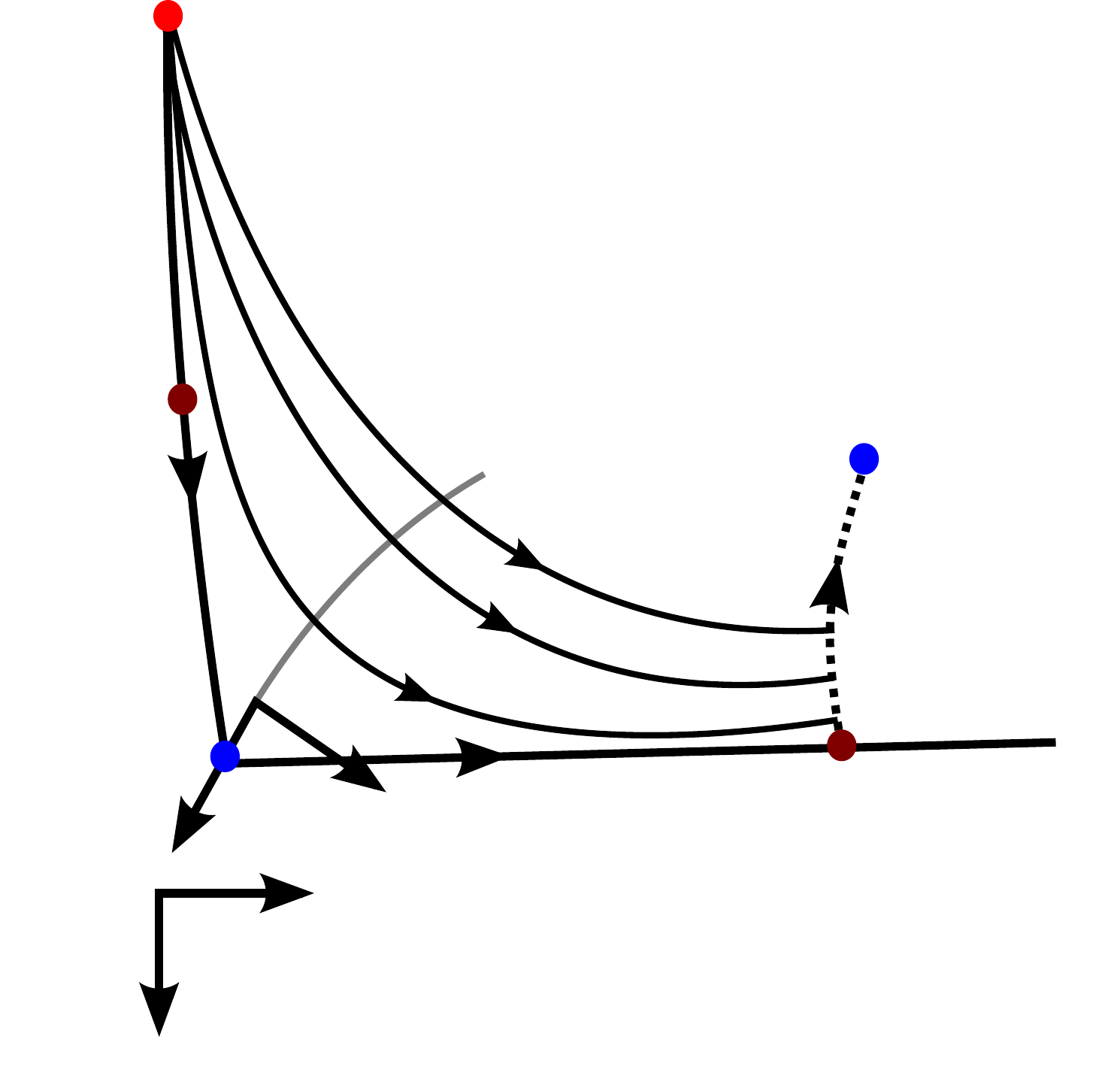\end{center}
\caption{Orientation issues due to breaking in the domain. The orientations $[\dot u(\infty), \dot v(-\infty)]$ and $\left[ \frac{d}{dR} \rho(R),-\nabla f^\alpha\right]$ agree, from which it follows that the map $\sigma^1$ is orientation preserving.}
\label{fig:orientation}
\end{figure}

\begin{proposition}Let the notation be as above. Then $\sigma^1$ preserves the orientation, and $\sigma^2$ reverses the orientation induced by $\co^\alpha$ and $\co^\beta$. 
\end{proposition}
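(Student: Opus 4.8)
The plan is to compute both gluing maps' effect on orientations by working in the local model furnished by Lemma~\ref{lem:gluing} and tracking the fiber-first convention through the defining exact sequences. For $\#^1$, I would first identify the three relevant oriented spaces near the glued point: $W(x,y)\bigr|_u$ carries the orientation $a[\dot u]$, $W_{h^{\beta\alpha}}(y,y')\bigr|_v$ carries orientation $b$, and $W_{h^{\beta\alpha}}(x,y')\bigr|_{\gamma_u\#^1_{R_0}v}$ carries the orientation determined by the exact sequence in Proposition~\ref{prop:manifolds}. The key geometric input is that the glued point $\rho(R)$ produced by Lemma~\ref{lem:gluing} lies on a path whose tangent $\frac{d}{dR}\rho(R)$ has nonzero component along $-\nabla_{g^\alpha} f^\alpha$; combined with the limits $\psi^\alpha(-R,\rho(R))\to u$ and $\rho(R)\to y$, this says that as $R\to\infty$ the tangent direction $\frac{d}{dR}\rho(R)$ limits onto the concatenation of the outgoing direction $\dot u(\infty)$ along the broken orbit in $M(x,y)$ and the tangent to $W_{h^{\beta\alpha}}(y,y')\bigr|_v$ at $v$. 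So the heart of the argument is the claim illustrated in Figure~\ref{fig:orientation}: the orientation $[\dot u(\infty),\dot v(-\infty)]$ on the broken configuration and the orientation $\left[\frac{d}{dR}\rho(R),-\nabla f^\alpha\right]$ on the nearby glued configuration agree.

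To make that precise I would set up the exact sequence $0\to TW_{h^{\beta\alpha}}(x,y')\to TW^u(x)\to NW^s(y')\to 0$ at the glued point and compare it, via the $\lambda$-Lemma estimates from the proof of Lemma~\ref{lem:gluing}, with the corresponding sequences for $W(x,y)$ (sitting inside $W^u(x)$, oriented by the negative gradient) and for $W_{h^{\beta\alpha}}(y,y')$ (using that $W^u(x)$ splits near the broken orbit as $TW^u(y)\oplus (\text{flow direction})$, since $W(x,y)$ is one-dimensional and $|x|=|y|+1$). The orientation $\co^\alpha$ enters through the convention relating the orientation of $W^u(x)$ to that of $W^u(y)$ along a flow line — this is exactly the sign $a$ in $a[\dot u]$ — and $\co^\beta$ enters through the orientation of $NW^s(y')\cong T W^u(y')$. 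Chasing the fiber-first convention through both sequences, the contributions of $\co^\alpha$ and $\co^\beta$ match up on the two sides, leaving the sign of $\sigma^1$ equal to $+1$. This is the standard gluing-compatibility computation (cf.~\cite[Proposition 3.10 and Theorem 3.9]{weber}), now carried out with the map $h^{\beta\alpha}$ present in the codomain of the exact sequence.

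For $\#^2$ the same scheme applies but the construction is asymmetric: the gluing parameter $R$ now acts in the codomain $M^\beta$, with $u\#^2_R\gamma_v=(h^{\beta\alpha})^{-1}(\psi^\beta(-R,\rho(R)))$, so the flow direction being inserted is $+\nabla f^\beta$ rather than $-\nabla f^\beta$ — equivalently, the broken orbit $v\in W(x',y')$ is attached on the \emph{normal-bundle} side $NW^s(y')$ of the exact sequence rather than on the domain side $TW^u(x)$. Because the fiber-first convention places $TW_{h^{\beta\alpha}}(x,y')$ before $NW^s(y')$, inserting the one-dimensional factor $M(x',y')$ (oriented by $-\nabla f^\beta$, hence by $[\dot v]$) into the normal piece, as opposed to prepending it as in $\#^1$, picks up a sign $(-1)^{\dim W_{h^{\beta\alpha}}(x,x')}\cdot(\text{convention sign})$; for the index combination at hand ($|x|=|x'|$, so $W_{h^{\beta\alpha}}(x,x')$ is $0$-dimensional) this works out to an overall reversal, giving $\sigma^2=-1$. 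I expect the main obstacle to be bookkeeping: correctly pinning down the sign convention relating the orientation of $W^u(x)$ along a flow line to that of $W^u(y)$ (the definition of the sign $a$), and being careful that the "$-2R$" shift appearing in the definition $\gamma_u\#^1_Rv=\psi^\alpha(R,\rho(R))$ with $\psi^\alpha(-2R,\cdot)\in D^{|y|}$ does not introduce a spurious orientation factor — it does not, since reparametrizing $R\mapsto 2R$ is orientation preserving on $[R_0,\infty)$, but this needs to be checked rather than assumed. Once the local model and conventions are fixed, both sign computations are forced.
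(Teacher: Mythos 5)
Your treatment of $\sigma^1$ is essentially the paper's argument: the oriented exact sequences for $W(x,y)$, $W_{h^{\beta\alpha}}(y,y')$ and $W_{h^{\beta\alpha}}(x,y')$, together with parallel-transport identifications of $TW^u(x)$ and of the normal bundles of the stable manifolds, give $T_{\gamma_u\#^1_{R_0}v}W_{h^{\beta\alpha}}(x,y')\cong T_uW(x,y)\oplus T_vW_{h^{\beta\alpha}}(y,y')$ as \emph{oriented} spaces, and the only remaining input is exactly the geometric claim you isolate, that $[\dot u(\infty),\dot v(-\infty)]$ and $\left[\frac{d}{dR}\rho(R),-\nabla f^\alpha\right]$ agree on the flowed-out two-dimensional model, whence $[\dot u]=\left[\frac{d}{dR}\gamma_u\#^1_Rv\right]$ after quotienting the $\mR$-action. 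That half is fine.

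The $\sigma^2$ half has a genuine gap. The exact-sequence bookkeeping for $\#^2$ yields $T_{u\#^2\gamma_v}W_{h^{\beta\alpha}}(x,y')\cong T_uW_{h^{\beta\alpha}}(x,x')\oplus T_vW(x',y')$ with orientations matching on the nose, exactly as for $\#^1$: no sign is produced by the position of the broken orbit in the sequence. Your proposed factor $(-1)^{\dim W_{h^{\beta\alpha}}(x,x')}$ equals $+1$ here, so the entire reversal is carried by an unspecified ``convention sign,'' which amounts to asserting the conclusion rather than deriving it. The actual source of the $-1$ is geometric and parallels the $\sigma^1$ step: one flows out the image $h^{\beta\alpha}\bigl(W_{h^{\beta\alpha}}(x,y')\bigr)$ near $u$ by $\psi^\beta$ and compares orientations at the broken corner $x'$. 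The broken configuration is oriented by $[\dot{h^{\beta\alpha}(u)}(\infty),\dot v(-\infty)]$ --- incoming flow direction in the first slot, moduli direction in the second --- while the glued parametrization is oriented by $\left[\frac{d}{dR}\rho(R),-\nabla f^\beta\right]$ --- gluing parameter first, flow direction second. Matching the flow-direction slots forces a transposition of the two one-dimensional factors, $\left[\frac{d}{dR}\rho(R),-\nabla f^\beta\right]=-\left[-\nabla f^\beta,\frac{d}{dR}\rho(R)\right]$, and hence $[\dot v]=-\left[\frac{d}{dR}u\#^2_R\gamma_v\right]$; in $\#^1$ no transposition occurs because the incoming broken orbit $u\in M(x,y)$ itself supplies the gluing-parameter direction. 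Without making this comparison explicit, the sign of $\sigma^2$ is not established.
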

\begin{proof}
We first treat the gluing map $\sigma^1$. By the transversality assumptions we have the following exact sequences of oriented vector spaces
\begin{gather}
\xymatrix{0\ar[r]&T_uW(x,y)\ar[r]&T_uW^u(x)\ar[r]&N_u W^s(y)\ar[r]&0},
\label{eq:or1}\\
\xymatrix{0\ar[r]&T_vW_{h^{\beta\alpha}}(y,y')\ar[r]&T_vW^u(y)\ar[r]^-{dh^{\beta\alpha}}&N_{h^{\beta\alpha}(v)} W^s(y')\ar[r]&0},
\label{eq:or2}\\
\xymatrix@C=.23cm{0\ar[r]&T_{\gamma_u\#^1_{R_0}v}W_{h^{\beta\alpha}}(x,y')\ar[r]&T_{\gamma_u\#^1_{R_0}v}W^u(x)\ar[rr]^-{dh^{\beta\alpha}}&&N_{h^{\beta\alpha}({\gamma_u\#^1_{R_0}v)}} W^s(y')\ar[r]&0}.
\label{eq:or3}
\end{gather}
The following isomorphisms of oriented vector spaces are induced by parallel transport and the fact that the stable and unstable manifolds are contractible
\begin{gather}
\label{eq:iso1}
T_uW^u(x)\cong T_{\gamma_u\#^1_{R_0}v}W^u(x),\\
\label{eq:iso2}
 N_{h^{\beta\alpha}(v)}W^s(y')\cong N_{h^{\beta\alpha}({\gamma_u\#^1_{R_0}v)}} W^s(y')\cong N_{y'}W^s(y')\cong T_{y'}W^u(y'),\\
N_uW^s(y)\cong N_yW^s(y)\cong T_yW^u(y)\cong T_vW^u(y).
\label{eq:iso3}
\end{gather}
Along with the identification of the normal bundle of the stable manifold with the tangent bundle of the unstable manifold. From the exact sequences \bref{eq:or1} and \bref{eq:or2} and isomorphisms \bref{eq:iso2} and~\bref{eq:iso3} it now follows that, as oriented vector spaces,
\begin{align*}
T_uW^u(x)&\cong T_uW(x,y)\oplus N_uW^s(y)\\
&\cong T_uW(x,y)\oplus T_vW_{h^{\beta\alpha}}(y,y')\oplus N_{h^{\beta\alpha}(v)}W^s(y')\\
&\cong T_uW(x,y)\oplus T_vW_{h^{\beta\alpha}}(y,y')\oplus T_{y'}W^u(y').
\end{align*}
Analogously, from~\bref{eq:or3},~\bref{eq:iso1} and~\bref{eq:iso2} we get
\begin{align*}
T_uW^u(x)\cong T_{\gamma_u\#^1_{R_0}v}W^u(x)&\cong T_{\gamma_u\#^1_{R_0}v}W_{h^{\beta\alpha}}(x,y')\oplus N_{h^{\beta\alpha}({\gamma_u\#^1_{R_0}v)}} W^s(y')\\
&\cong T_{\gamma_u\#^1_{R_0}v}W_{h^{\beta\alpha}}(x,y')\oplus T_{y'}W^u(y').
\end{align*}
Combining the last two formulas we see that
$$
T_{\gamma_u\#^1_{R_0}v}W_{h^{\beta\alpha}}(x,y')\cong T_uW(x,y)\oplus T_vW_{h^{\beta\alpha}}(y,y').
$$
The orientation on the right hand side is $ab[\dot u]$. The tangent vector to the orbit $\gamma_u$ through $u$ has a well defined limit as $t\rightarrow \infty$  which we denote by $[\dot u(\infty)]$, and similarly the tangent vector to $\gamma_v$ has a well defined limit as $t\rightarrow -\infty$, which we denote by $[\dot v(-\infty)]$, cf.~\cite[Theorem 3.11]{weber} and~\cite[Lemma B.5]{schwarz}. We want to compare the orientation $[\dot u]$ to $\left[\frac{d}{dR}\Bigr|_{R=R_0} \gamma_u\#^1_Rv\right]$, and are able to do this as follows. We can flow $W_{h^{\beta\alpha}}(x,y')\Bigr|_{\gamma_u\#^1_{R_0}v}$ with the gradient flow, cf.~Figure~\ref{fig:orientation}, which we denote by $W _{h^{\beta\alpha}} (x,y')\Bigr|_{\gamma_u\#^1_{R_0}v}\times \mR)$. Note that $y$ is in the closure of this space. Then the orientations $[\dot u(\infty), \dot v(-\infty)]$ and $\left[ \frac{d}{dR}\Bigr|_{R=R_0} \gamma_u\#^1_Rv,-\nabla f^\alpha\right]$ agree of this space (here we extend the manifold to its closure). Quotienting out the $\mR$ action, we see that the orientation $[\dot u]=[\dot u(\infty)]$ agrees with $\left[\frac{d}{dR}\rho(R)\right]$ which agrees with $\left[ \frac{d}{dR}\Bigr|_{R=R_0} \gamma_u\#^1_Rv\right]$. The orientation map $\sigma^1$ preserves the orientation. 
\begin{figure}
\def\svgwidth{.6\textwidth}\begin{center}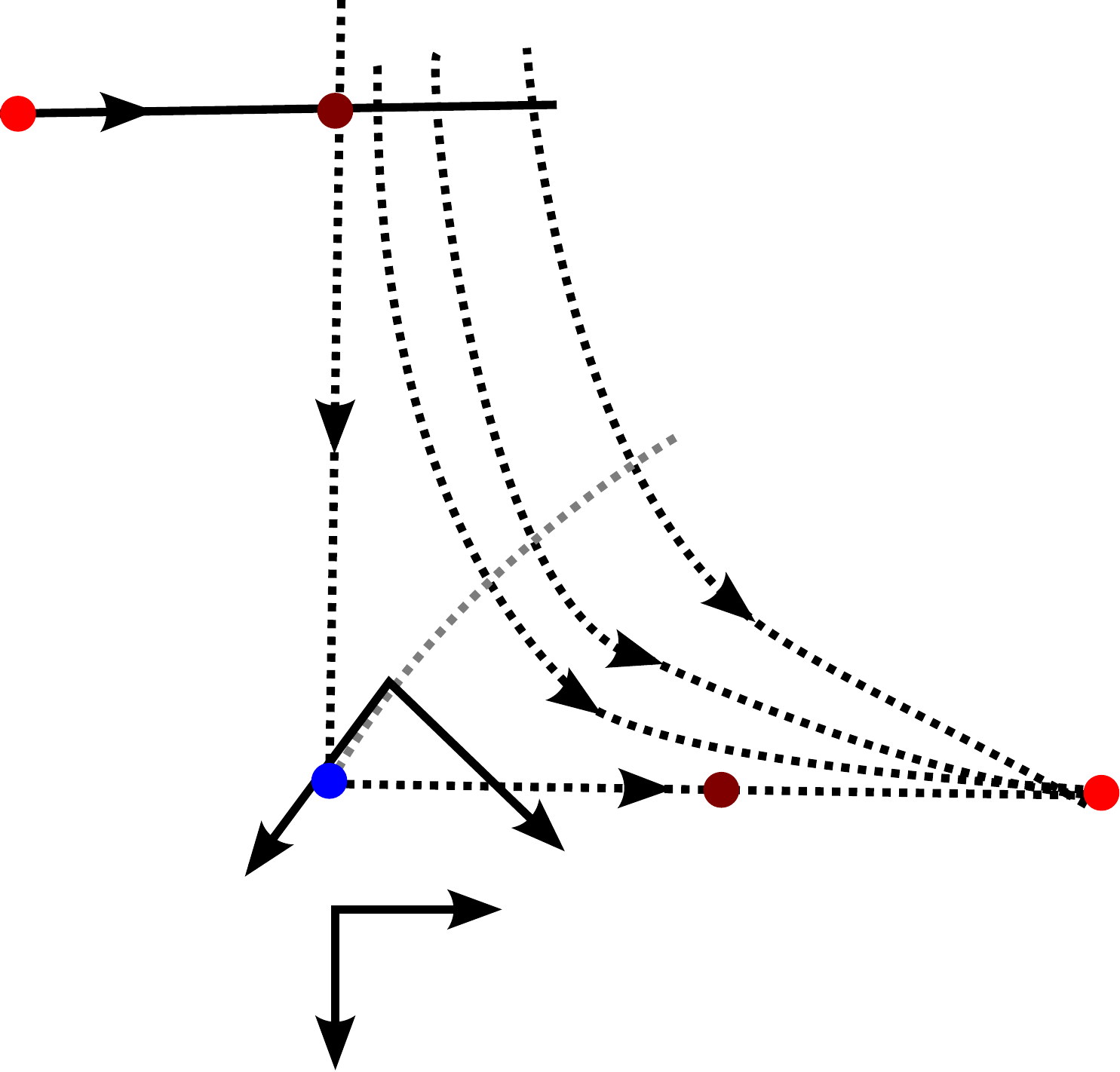\end{center}
\caption{Orientation issues due to breaking in the codomain. The orientations $[\dot{h^{\beta\alpha}}(u)(\infty),\dot v(-\infty)]$ and $\left[ \frac{d}{dR} \rho(R),-\nabla f^\beta\right]$ agree, from which it follows that the map $\sigma^2$ is orientation reversing.}
\label{fig:orientation2}
\end{figure}
The proof that $\sigma^2$ is orientation reversing is analogous. Again we use the notation of Proposition~\ref{prop:gluing2}, with $u\in W_{h^{\beta\alpha}}(x,x')$ and $v\in W(x',y')$. We have the exact sequences of oriented vector spaces
% \begin{equation}
\begin{gather}
\xymatrix@C=.25cm{0\ar[r]&T_vW(x',y')\ar[r]&T_vW^u(x')\ar[r]&N_v W^s(y')\ar[r]&0},
\label{eq:2or1}\\
%\end{equation}
%and
%\begin{equation}
\xymatrix@C=.5cm{0\ar[r]&T_uW_{h^{\beta\alpha}}(x,x')\ar[r]&T_uW^u(x)\ar[r]^-{dh^{\beta\alpha}}&N_{h^{\beta\alpha}(u)} W^s(x')\ar[r]&0},
\label{eq:2or2}\\
%\end{equation}
% and
% \begin{equation}
\xymatrix@C=.23cm{0\ar[r]&T_{u\#^2_{R_0}\gamma_v}W_{h^{\beta\alpha}}(x,y')\ar[r]&T_{u\#^2_{R_0}\gamma_v}W^u(x)\ar[rr]^-{dh^{\beta\alpha}}&&N_{h^{\beta\alpha}({u\#^2_{R_0}\gamma_v)}} W^s(y')\ar[r]&0}.
\label{eq:2or3}
%\end{equation}
\end{gather}
By isomorphisms induced by parallel transport, analogous to the isomorphisms \bref{eq:iso1},\bref{eq:iso2}, and \bref{eq:iso3}, and from \bref{eq:2or2} and \bref{eq:2or1} we get that
\begin{align*}
T_uW^u(x)\cong T_uW_{h^{\beta\alpha}}(x,x')&\oplus N_{h^{\beta\alpha}(u)}(W^s(x')\\&\cong T_uW_{h^{\beta\alpha}}(x,x')\oplus T_vW(x',y')\oplus T_{y'}W^u(y').
\end{align*}
Similarly, from \bref{eq:2or3} and the isomorphisms induced by parallel transport we get
$$
T_uW^u(x)\cong T_{u\#^2\gamma_v}W_{h^{\beta\alpha}}(x,y')\oplus T_{y'}W^u(y'),
$$
which gives that
$$
T_{u\#^2\gamma_v}W_{h^{\beta\alpha}}(x,y') \cong T_uW_{h^{\beta\alpha}}(x,x')\oplus T_{h^{\beta\alpha}(u)}W(x',y').
$$
The orientation on the right hand side is $ab[\dot v]$. Locally around $u$, $h^{\beta\alpha}$ is injective when restricted to $W_{h^{\beta\alpha}}(x,y')\Bigr|_u$. We can therefore take this image and flow with $\psi^\beta$, which we denote by $h^{\beta\alpha}\left(W_{h^{\beta\alpha}}(x,y')\Bigr|_u\right)\times \mR$. The point $x'$ lies in the closure, again the limits of tangent vectors of the orbits through $h^{\beta\alpha}(u)$ and $v$ are well defined for $t\rightarrow \pm \infty$, and the orientations $[h^{\beta\alpha}(u)(\infty),v(-\infty)]$ and $$\left[\frac{d}{dR}\rho(R),-\nabla f^\beta\right]=-\left[-\nabla f^\beta,\frac{d}{dR}\rho(R)\right]$$ 
agree. By quotienting out the flow it follows that $[\dot v]=[\dot v(-\infty)]=-\left[\frac{d}{dR}\rho(R)\right]=-\left[\frac{d}{dR}\Bigr|_{R=R_0}u\#^2_R\gamma_v\right]$. The map $\sigma^2$ is orientation reversing, cf.~Figure~\ref{fig:orientation2}. 
\end{proof}
\subsection{The induced map $h_*^{\beta\alpha}$ is a chain map.}
Propositions~\ref{prop:gluing2} and~\ref{prop:compactness}, along with the considerations of the previous section directly give the following theorem.
\begin{theorem}
Let $h^{\beta\alpha}$ be transverse with respect to $\cQ^\alpha$ and $\cQ^\beta$. For each $x\in \crit f^\alpha$ and $y'\in \crit f^\beta$ with $|x|=|y'|+1$, the space
\begin{eqnarray*}
\widehat{ W_{h^{\beta\alpha}}}(x,y')=W_{h^{\beta\alpha}}(x,y')\bigcup_{|y|=|y'|}&M(x,y)\times W_{h^{\beta\alpha}}(y,y')\\
\bigcup_{|x'|=|x|}& -\left(W_{h^{\beta\alpha}}(x,x')\times M(x',y')\right),
\end{eqnarray*}
has a natural structure as a compact oriented manifold with boundary given by the gluing maps.
\label{prop:compactification}
\end{theorem}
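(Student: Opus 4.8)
The plan is to assemble the three pieces already in hand---the compactness statement (Proposition~\ref{prop:compactness}), the gluing embeddings (Proposition~\ref{prop:gluing2}), and the orientation behavior of $\sigma^1$ and $\sigma^2$---into a single manifold-with-boundary structure on $\widehat{W_{h^{\beta\alpha}}}(x,y')$. First I would observe that $W_{h^{\beta\alpha}}(x,y')$ is, by Proposition~\ref{prop:manifolds}, a smooth oriented $1$-manifold without boundary, and that by compactness of $M^\alpha$ every sequence in it has a convergent subsequence. Proposition~\ref{prop:compactness} classifies the possible limits: either the limit lies back in $W_{h^{\beta\alpha}}(x,y')$ (an interior accumulation, impossible for a sequence escaping to infinity in the $1$-manifold unless it converges in the manifold topology too), or it lies in one of the finitely many strata $W_{h^{\beta\alpha}}(y,y')$ with $|y|=|y'|$ (breaking in the domain, with a factor in $M(x,y)$), or in one of the strata $W_{h^{\beta\alpha}}(x,x')$ with $|x'|=|x|$ (breaking in the codomain, with a factor in $M(x',y')$). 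Since $|x|=|y'|+1$, the spaces $M(x,y)$ and $M(x',y')$ appearing here are $0$-dimensional, hence finite sets of points, and likewise $W_{h^{\beta\alpha}}(y,y')$ and $W_{h^{\beta\alpha}}(x,x')$ are $0$-dimensional finite sets; thus each product appearing in the definition of $\widehat{W_{h^{\beta\alpha}}}(x,y')$ is a finite set of points, and the union is glued onto $W_{h^{\beta\alpha}}(x,y')$ as its ``ends''.

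Next I would use Proposition~\ref{prop:gluing2} to produce the manifold-with-boundary charts near these added points. For each triple $(\gamma_u, v)$ with $\gamma_u\in M(x,y)$, $v\in W_{h^{\beta\alpha}}(y,y')$, $|y|=|y'|$, the embedding $\#^1$ restricted to $\{\gamma_u\}\times[R_0,\infty)\times\{v\}$ gives a half-open arc $[R_0,\infty)\hookrightarrow W_{h^{\beta\alpha}}(x,y')$ whose parameter $R\to\infty$ limit, by the limit statements in Lemma~\ref{lem:gluing} propagated through the construction in Proposition~\ref{prop:gluing2}, is exactly the broken configuration $(\gamma_u,v)$; this provides a boundary chart $[R_0,\infty]\to\widehat{W_{h^{\beta\alpha}}}(x,y')$ identifying $(\gamma_u,v)$ as a boundary point. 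Symmetrically, $\#^2$ provides boundary charts at the points $(u',\gamma_{v'})\in W_{h^{\beta\alpha}}(x,x')\times M(x',y')$. The ``finally'' clause of Lemma~\ref{lem:gluing} (no orbit through $D'\setminus\bigcup\psi(-R,\rho(R))$ meets $E'$) together with the surjectivity-onto-ends half of Proposition~\ref{prop:compactness}--\ref{prop:gluing2} guarantees that these gluing arcs are disjoint for distinct broken configurations and that every end of $W_{h^{\beta\alpha}}(x,y')$ is accounted for exactly once; hence the charts patch to a single topology on $\widehat{W_{h^{\beta\alpha}}}(x,y')$ making it a compact $1$-manifold with boundary, the boundary being precisely the disjoint union of the two families of product sets.

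Finally I would address the orientations and the sign $-$ in front of the second family. Each gluing arc inherits an orientation from the parameter $R$, namely $[\tfrac{d}{dR}\gamma_u\#^1_R v]$ in the $\#^1$ case; by the Proposition preceding this theorem, $\sigma^1$ carries the product orientation of $M(x,y)\times W_{h^{\beta\alpha}}(y,y')$ to this arc orientation, so the orientation $W_{h^{\beta\alpha}}(x,y')$ inherits near that boundary point agrees with the product orientation---this is why the domain-breaking family enters with a $+$ sign. For $\#^2$, the same Proposition says $\sigma^2$ reverses orientation, so the inherited orientation disagrees with the product orientation of $W_{h^{\beta\alpha}}(x,x')\times M(x',y')$, and we must insert the formal $-$, i.e.\ use $-\bigl(W_{h^{\beta\alpha}}(x,x')\times M(x',y')\bigr)$, to make the boundary-orientation convention consistent across all of $\partial\widehat{W_{h^{\beta\alpha}}}(x,y')$. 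I expect the main obstacle to be purely bookkeeping rather than conceptual: checking that the gluing images for different broken configurations do not overlap and that together with the interior of $W_{h^{\beta\alpha}}(x,y')$ they cover a neighborhood of the boundary, i.e.\ that the ``ends'' enumerated by Proposition~\ref{prop:compactness} are in bijection with the added points and the gluing maps are local homeomorphisms onto those ends. This uses the injectivity of $\rho$ in Lemma~\ref{lem:gluing}, the local uniqueness coming from the Banach fixed point argument, and the disjointness of the strata $W_{h^{\beta\alpha}}(x,y')$, $W_{h^{\beta\alpha}}(y,y')$, $W_{h^{\beta\alpha}}(x,x')$ noted at the start of the proof of Proposition~\ref{prop:compactness}; assembling these into a clean chart-compatibility statement is the only place real care is needed.
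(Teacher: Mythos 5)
Your proposal is correct and follows exactly the route the paper takes: the paper's proof of this theorem is the single sentence that Propositions~\ref{prop:gluing2} and~\ref{prop:compactness}, together with the orientation analysis of $\sigma^1$ and $\sigma^2$, directly give the result, and you have simply written out the bookkeeping (zero-dimensionality of the broken strata, boundary charts from the gluing embeddings, surjectivity onto the ends, and the sign conventions) that the paper leaves implicit.
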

Note that this proposition does not state that exactly one half of the boundary components correspond to $M(x,y)\times W_{h^{\beta\alpha}}(y,y')$ and the other half of the boundary components to $W_{h^{\beta\alpha}}(x,x')\times M(x',y')$. Still it does follow that $h_*^{\beta\alpha}$ is a chain map. 
\begin{proposition}
Let $h^{\beta\alpha}$ be a transverse map with respect to $\cQ^\alpha$ and $\cQ^\beta$. Then the induced map $h_*^{\beta\alpha}:C_*(\cQ^\alpha)\rightarrow C_*(\cQ^\beta)$ is a chain map.
\label{prop:chain}
\end{proposition}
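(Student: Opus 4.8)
The plan is to deduce the chain map property directly from the compactness and gluing analysis summarized in Theorem~\ref{prop:compactification}. Fix critical points $x\in \crit f^\alpha$ and $y'\in \crit f^\beta$ with $|x|=|y'|+1$. The statement $h^{\beta\alpha}_{|y'|}\partial^\alpha_{|x|}=\partial^\beta_{|x|}h^{\beta\alpha}_{|x|}$ reduces, after expanding both sides in the basis of critical points, to the identity
$$
\sum_{|y|=|y'|} n_{\psi^\alpha}(x,y)\, n_{h^{\beta\alpha}}(y,y') \;=\; \sum_{|x'|=|x|} n_{h^{\beta\alpha}}(x,x')\, n_{\psi^\beta}(x',y'),
$$
where $n_{\psi^\alpha}(x,y)$ and $n_{\psi^\beta}(x',y')$ are the signed counts of unparametrized gradient orbits, i.e.\ the coefficients of $\partial^\alpha$ and $\partial^\beta$. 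So the whole proof amounts to matching the left side to one collection of boundary components of the compactified moduli space and the right side to the other.

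The key steps, in order, are as follows. First I would invoke Theorem~\ref{prop:compactification}: for $|x|=|y'|+1$ the space $\widehat{W_{h^{\beta\alpha}}}(x,y')$ is a compact oriented $1$-manifold with boundary, and its boundary is the disjoint union of the pieces $M(x,y)\times W_{h^{\beta\alpha}}(y,y')$ over $|y|=|y'|$ (glued in via $\#^1$, which by the previous proposition is orientation preserving) and the pieces $W_{h^{\beta\alpha}}(x,x')\times M(x',y')$ over $|x'|=|x|$ (glued in via $\#^2$, which reverses orientation, hence the minus sign in the statement of the theorem). Second, I would use the elementary fact that the signed count of boundary points of any compact oriented $1$-manifold with boundary is zero. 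Third, I would identify the contributions: the boundary component indexed by $y$ is a finite set of points whose signed count is $n_{\psi^\alpha}(x,y)\cdot n_{h^{\beta\alpha}}(y,y')$ because $M(x,y)$ (zero-dimensional, $|x|=|y|+1$) contributes $n_{\psi^\alpha}(x,y)$ and $W_{h^{\beta\alpha}}(y,y')$ (zero-dimensional, $|y|=|y'|$) contributes $n_{h^{\beta\alpha}}(y,y')$, and orientations multiply under the product since $\#^1$ is orientation preserving; similarly the boundary component indexed by $x'$ contributes $-n_{h^{\beta\alpha}}(x,x')\cdot n_{\psi^\beta}(x',y')$, the sign coming from the orientation reversal of $\#^2$. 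Setting the total signed boundary count to zero yields exactly the displayed identity, hence the chain map property for all $k$.

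The main obstacle is bookkeeping rather than new analysis: one must be careful that the orientation conventions used to define $n_{h^{\beta\alpha}}(x,y)$, $n_{\psi^\alpha}(x,y)$ and the boundary orientation of a $1$-manifold are mutually consistent, so that the product orientations on the boundary strata are precisely $ab[\dot u]$ on the $\#^1$-side and $-cd[\dot v]$ on the $\#^2$-side as recorded in the preceding proposition. Once these conventions are pinned down (the fiber-first convention for exact sequences, and the outward-normal-last convention for boundary orientation), the argument is the standard "the boundary of a $1$-manifold has zero signed count" trick, and no further estimates are needed. I would also remark that the statement of Theorem~\ref{prop:compactification} does \emph{not} assert that the two families of boundary components are separately balanced, only that together they bound; this is exactly why one gets a chain \emph{map} identity rather than two separate vanishing statements, and this point is worth flagging explicitly in the proof.
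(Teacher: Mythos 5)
Your proposal is correct and follows the same route as the paper: expand $(h_{k-1}^{\beta\alpha}\partial^\alpha_k-\partial^\beta_k h_k^{\beta\alpha})(x)$ coefficient by coefficient, identify the result as the signed boundary count of the compact oriented $1$-manifold $\widehat{W_{h^{\beta\alpha}}}(x,y')$ from Theorem~\ref{prop:compactification} (with the sign on the $\#^2$-strata coming from the orientation reversal), and conclude it vanishes. Your closing remark that the two families of boundary strata need not separately balance is exactly the caveat the paper records immediately before the proposition.
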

\begin{proof}
Let $x\in \crit f^\alpha$. We compute using Proposition~\ref{prop:compactification}
\begin{align*}
(h_{k-1}^{\beta\alpha}\partial^\alpha_k&-\partial^\beta_k h_k^{\beta\alpha})(x)\\&=\sum_{|y'|=|x|-1}\left(\sum_{|y|=|x|-1}n_{h^{\beta\alpha}}(y,y')n(x,y)-\sum_{|x'|=|x|}n(x',y')n_{h^{\beta\alpha}}(x,x')\right)y'\\
&=\sum_{|y'|=|x|-1}\partial \widehat W_{h^{\beta\alpha}}(x,y')y'=0.
\end{align*}
Because the oriented count of the boundary components of a compact oriented one dimensional manifold is zero. Hence $h_*^{\beta\alpha}$ is a chain map.
\end{proof}

\section{Homotopy induced chain homotopies}
\label{sec:homotopy}
The main technical work, showing that $h_*^{\beta\alpha}$ is a chain map, is done. To show that homotopic maps induce the same maps in Morse homology, we could again define an appropriate moduli space and analyze its compactness failures. However, a simpler method is to construct a dynamical model of the homological cone. This trick is used in Morse homology to show that Morse homology does not depend on the choice of function, metric and orientation. Using the homotopy we build a higher dimensional system -- the dynamical cone -- where we use the fact that an induced map is a chain map to prove homotopy invariance. %, that is Property (iii) from Section~\ref{sec:introfunctorialitymorse}.

\begin{proposition}[Homotopy invariance]
Let $\cQ^\alpha, \cQ^\beta, \cQ^\gamma, \cQ^\delta$, be Morse data with $M^\alpha=M^\gamma$, and $M^\beta=M^\delta$. Let $h^{\beta\alpha}\in \scrT(\cQ^\alpha,\cQ^\beta)$ and $h^{\delta\gamma}\in \scrT(\cQ^\gamma,\cQ^\delta)$, and assume the maps are homotopic. Then $h_*^{\delta\gamma}\Phi^{\gamma\alpha}_*$ and $\Phi^{\delta\beta}_*h_*^{\beta\alpha}$ are chain homotopic. That is, there exists a degree $+1$ map $P_*^{\delta\alpha}:C_*(\cQ^\alpha)\rightarrow C_*(\cQ^\delta)$, such that
\[
\Phi_k^{\delta\beta}h^{\beta\alpha}_k-h^{\delta\gamma}_k\Phi_k^{\gamma\alpha}=-\partial^\delta_{k+1}P_k^{\delta\alpha}-P_{k-1}^{\delta\alpha}\partial_k^{\alpha},\quad\text{for all}\quad k.
\]
\label{prop:homotopy}
\end{proposition}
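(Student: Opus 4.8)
The plan is to deduce the chain homotopy from the chain-map property (Proposition~\ref{prop:chain}) applied to a single transverse map between \emph{mapping cones}: this is the standard "dynamical cone" device used to prove that Morse homology is independent of the auxiliary data, and the point is that Proposition~\ref{prop:chain} is already strong enough to produce the homotopy for free.

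First I fix a homotopy $H\colon[0,1]\times M^\alpha\to M^\beta$ from $h^{\beta\alpha}$ to $h^{\delta\gamma}$. On $M^\alpha\times\mR$ I construct cone Morse data $\widehat{\cQ}^\alpha$ which, near $M^\alpha\times\{0\}$, is $\cQ^\gamma$ together with a nondegenerate minimum in the $\mR$-direction, near $M^\alpha\times\{1\}$ is $\cQ^\alpha$ together with a nondegenerate maximum in the $\mR$-direction, is Morse--Smale in between, and has $\partial_s\widehat f^\alpha$ of constant sign for $s\notin[0,1]$. This last condition yields a maximum principle in the $s$-variable confining every moduli space occurring below to the compact region $M^\alpha\times[0,1]$, so that the compactness and gluing analysis of Sections~\ref{sec:chainmap} applies unchanged. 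The resulting complex is the algebraic mapping cone of the continuation $\Phi^{\gamma\alpha}$: one has $\widehat C_k(\widehat{\cQ}^\alpha)=C_k(\cQ^\gamma)\oplus C_{k-1}(\cQ^\alpha)$ with
$$
\widehat\partial^\alpha(c^\gamma\oplus c^\alpha)=\bigl(\partial^\gamma c^\gamma+\Phi^{\gamma\alpha}c^\alpha\bigr)\oplus\bigl(-\partial^\alpha c^\alpha\bigr),
$$
where the sign on $\partial^\alpha$ and the identification of the off-diagonal term are forced by $(\widehat\partial^\alpha)^2=0$, the chain-map property of $\Phi^{\gamma\alpha}$, and a choice of orientation convention on the unstable sets of $\widehat{\cQ}^\alpha$; see \cite{weber,schwarz,floerintersections}. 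Symmetrically I build $\widehat{\cQ}^\beta$ on $M^\beta\times\mR$ with $\widehat C_k(\widehat{\cQ}^\beta)=C_k(\cQ^\delta)\oplus C_{k-1}(\cQ^\beta)$ and off-diagonal term $\Phi^{\delta\beta}$.

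Next I lift $H$. Choose smooth $\sigma\colon\mR\to[0,1]$ with $\sigma\equiv0$ for $s\le\epsilon$ and $\sigma\equiv1$ for $s\ge1-\epsilon$, and set $\widehat h(q,s)=(H_{\sigma(s)}(q),s)$. Then $\widehat h=h^{\delta\gamma}\times\id$ near $s=0$ and $\widehat h=h^{\beta\alpha}\times\id$ near $s=1$, so $\widehat h$ is automatically transverse with respect to $\widehat{\cQ}^\alpha,\widehat{\cQ}^\beta$ near both ends; a perturbation supported in $M^\alpha\times(0,1)$, which exists by genericity (Theorem~\ref{thm:genericity}), makes $\widehat h$ transverse everywhere without altering it near the ends. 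By Proposition~\ref{prop:chain}, $\widehat h_*\colon\widehat C_*(\widehat{\cQ}^\alpha)\to\widehat C_*(\widehat{\cQ}^\beta)$ is a chain map. Now I read off its block form. For a critical point over $s=1$ mapping to one over $s=1$ the product structure there identifies the moduli space with $W_{h^{\beta\alpha}}(x,y)\times\{1\}$, so this block is $h^{\beta\alpha}_*$; for critical points over $s=0$ it is $W_{h^{\delta\gamma}}(x',y')\times\{0\}$, giving the block $h^{\delta\gamma}_*$. The block from the $\cQ^\gamma$-summand to the $\cQ^\beta$-summand vanishes, since $\widehat h$ preserves $s$ while the relevant unstable set sits over $s=0$ and the relevant stable set over $s=1$. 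The remaining block is a degree $+1$ map $P_*^{\delta\alpha}\colon C_*(\cQ^\alpha)\to C_*(\cQ^\delta)$. With the orientation conventions fixed above,
$$
\widehat h_*(c^\gamma\oplus c^\alpha)=\bigl(h^{\delta\gamma}c^\gamma+P^{\delta\alpha}c^\alpha\bigr)\oplus\bigl(h^{\beta\alpha}c^\alpha\bigr).
$$
Expanding $\widehat h_*\widehat\partial^\alpha=\widehat\partial^\beta\widehat h_*$: the $C_*(\cQ^\beta)$-components reproduce that $h^{\beta\alpha}_*$ is a chain map, the part linear in $c^\gamma$ in the $C_*(\cQ^\delta)$-components reproduces that $h^{\delta\gamma}_*$ is a chain map, and the part linear in $c^\alpha$ in the $C_*(\cQ^\delta)$-components gives exactly
$$
\Phi_k^{\delta\beta}h^{\beta\alpha}_k-h^{\delta\gamma}_k\Phi_k^{\gamma\alpha}=-\partial^\delta_{k+1}P_k^{\delta\alpha}-P_{k-1}^{\delta\alpha}\partial_k^{\alpha},
$$
which is the assertion.

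The main obstacle is orientation bookkeeping. The construction is otherwise routine, but one must fix compatible orientation conventions on the unstable sets of $\widehat{\cQ}^\alpha$ and of $\widehat{\cQ}^\beta$ (the interval-first versus interval-last choice), verify that these conventions genuinely yield the cone differentials $\widehat\partial^\alpha,\widehat\partial^\beta$ and the diagonal block $h^{\beta\alpha}_*$ with the signs written above and not differing ones, and check that $\widehat h$ intertwines the two choices; only then does the off-diagonal block come out with precisely the signs in the statement, and otherwise one absorbs a global sign into $P^{\delta\alpha}$. A secondary point is making precise that the closed-base analysis of Sections~\ref{sec:chainmap}--\ref{sec:functoriality} carries over to $M^\alpha\times\mR$, which is exactly what the $s$-direction maximum principle is for.
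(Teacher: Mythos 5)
Your proposal is correct and is essentially the paper's own argument: the paper likewise builds a dynamical cone (on $M^{\alpha}\times\mS^1$ with a bump function, restricting all counts to $M^{\alpha}\times[0,1]$ rather than using your $\mR$-direction maximum principle), lifts the homotopy to a transverse map $H(x,\mu)=(h_{\omega(\mu)}(x),\mu)$, and extracts the chain homotopy from the off-diagonal block of the chain-map identity of Proposition~\ref{prop:chain}. The only slip is a labeling inconsistency in your lift: with $H_0=h^{\beta\alpha}$ and $\sigma\equiv 0$ near $s=0$ you get $h^{\beta\alpha}\times\id$ there, not $h^{\delta\gamma}\times\id$, so $\sigma$ (or the parametrization of the homotopy) must be reversed to match your placement of $\cQ^{\gamma}$ at $s=0$.
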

\begin{proof}
Let $h_\lambda:M^{\alpha}\rightarrow M^{\beta}$ be a smooth homotopy between $h^{\beta\alpha}$ and $h^{\delta\gamma}$. Let $g^{\gamma\alpha}_\lambda$ be a smooth homotopy between $g^\alpha$ and $g^\gamma$, and $g^{\delta\beta}_\lambda$ a smooth homotopy between $g^\beta$ and $g^\delta$. Similarly let $f^{\gamma\alpha}_\lambda$ be a smooth homotopy between $f^{\alpha}$ and $f^\gamma$, and let $f^{\delta\beta}_\lambda$ be a smooth homotopy between $f^{\beta}$ and $f^{\delta}$. Choose $0<\epsilon<\frac{1}{4}$ and let $\omega:\mR\rightarrow [0,1]$ be a smooth, even and $2$-periodic function with the following properties:
$$
\omega(\mu)=\begin{cases}0\qquad&-\epsilon<\lambda<\epsilon\\1&-1\leq\lambda<-1+\epsilon\quad \text{and} \quad 1-\epsilon<\lambda \leq 1.\end{cases}
$$
and $\omega'(\mu)<0$  for $\mu\in (-1+\epsilon,-\epsilon)$, and $\omega'(\mu)>0$ for $\mu\in(\epsilon,1-\epsilon)$. We identify $\mS^1$ with $\mR/2\mZ$. Under the identification of $\mS^1$ with $\mR/2\mZ$, the function $\omega$ descends to a smooth function $\mS^1\rightarrow[0,1]$, which is also denoted $\omega$. Let $r>0$. We define the functions $F^\alpha$ on $M^{\alpha}\times \mS^1$ and $F^\beta$ on $M^{\beta}\times \mS^1$ via
\begin{align*}
F^\alpha(x,\mu)&=f^{\gamma\alpha}_{\omega(\mu)}(x)+r(1+\cos(\pi\mu)),\\
F^\beta(x,\mu)&=f^{\delta\beta}_{\omega(\mu)}(x)+r(1+\cos(\pi\mu)).
\end{align*}
For $r$ sufficiently large the functions $F^\alpha$ and $F^\beta$ are Morse, cf.\ \cite[Lemma 4.5]{rotvandervorst}, and the critical points can be identified by
\begin{align}
\nonumber
C_k(F^\alpha)&\cong C_{k-1}(f^\alpha)\oplus C_{k}(f^\gamma),\quad \text{and}\\
\quad C_k(F^\beta)&\cong C_{k-1}(f^\beta)\oplus C_{k}(f^\delta).
\label{eq:splitting}
\end{align}
We define $H:M^{\alpha}\times\mS^1\rightarrow M^{\beta}\times\mS^1$ with $H(x,\mu)=(h_{\omega(\mu)}(x),\mu)$. The connections of the gradient flow at $\mu=0$ and $\mu=1$ are transverse, and the map $H$ restricted to neighborhood at $\mu=0$ and $\mu=1$ also satisfies the required transversality properties. Hence we can perturb $H$ while keeping it fixed in the neighborhoods of $\mu=0$ and $\mu=1$, as well as the metrics $G^\alpha=g^{\gamma\alpha}_{\omega(\mu)}(x)\oplus d\mu^2$, and $G^\beta=g^{\delta\beta}_{\omega(\mu)}(x)\oplus d\mu^2$ outside $-\epsilon<\mu<\epsilon$ and $1-\epsilon<\mu<1+\epsilon$ to obtain Morse-Smale flows on $M^{\alpha}\times \mS^1$ and $M^{\beta}\times \mS^1$, such that the map $H$ is transverse everywhere. We orient the unstable manifolds in $M^\alpha\times \mS^1$ by $\cO^\alpha=(\partial \mu\oplus \co^\alpha)\cup \co^\gamma$, and the unstable manifolds in $M^\beta\times \mS^1$ by $\cO^\beta=(\partial \mu\oplus \co^\beta)\cup \co^\delta$.

% We study the unstable manifolds of $F^\alpha$. 
Let $(x,0)\in C_k(F^\alpha)$, thus $x\in C_{k-1}(f^\alpha)$. Then $W^u((x,0))\subset M^{\alpha}\times \mS^1\setminus\{1\}$ and $W^u((x,0))\cap M^{\alpha}\times \{0\}=W^u(x)\times\{0\}$. For $(x,1)\in C_k(F^\alpha)$, i.e. $x\in C_k(f^\gamma)$ we have $W^u((x,1))=W^u(x)\times \{1\}$. Similarly for $(y,0)\in C_k(F^\beta)$, i.e. $y\in C_{k-1}(f^\beta)$ we have $W^s((y,0))=W^s(y)\times \{0\}$. Finally for $(y,1)\in C_k(F^\beta)$, i.e. $y\in C_k(f^\delta)$, we have that $W^s((y,1))\subset M^{\beta}\times(\mS^1\setminus\{0\})$, and $W^s((y,1))\cap (M^{\beta}\times\{1\})=W^s(y)\times\{1\}$. 

By Propositions~\ref{prop:compactness} and~\ref{prop:gluing2} and the construction of $H$ it is clear\footnote{The map is isolated~cf.~Definition~\ref{defi:isolatedmap}} that we can restrict the count of the induced map $H_*$ to $W_H(x,y)\cap M^{\alpha}\times [0,1]$, see also Proposition \ref{prop:localmorseinduced}. Note that $H^{-1}(M^{\beta}\times(\mS^1\setminus\{0\}))\subset M^{\alpha}\times (\mS^1\setminus\{0\})$. Let $|(x,0)|=|(y,0)|$, then
$$
W_H((x,0),(y,0))=(W^{u}(x)\cap (h^{\beta\alpha})^{-1}(W^s(y)))\times \{0\},
$$
so $n_H((x,0),(y,0))=n_{h^{\beta\alpha}}(x,y)$ as oriented intersection numbers. % \todo{Note that here $W^u(x)$ and $W^s(y)$ have the same dimension, so there is no minus sign, which does pop up for the boundary map} 
Similarly for $|(x,1)|=|(y,1)|$, we find that
$$
W_H((x,1),(y,1))=(W^u(x)\cap (h^{\delta\gamma})^{-1}(W^s(y)))\times\{1\},
$$
which gives $n_H((x,1),(y,1))=n_{h^{\delta\gamma}}(x,y)$. For $|(x,1)|=|(y,0)|$ we compute that $n_H((x,1),(y,0))=0$. Finally we define a map $P^{\delta\alpha}_k:C_k(f^\alpha)\rightarrow C_{k+1}(f^\delta)$ by counting the intersections of $W^{u}((x,0))$ and $H^{-1}(W^s((y,1))$ with sign. Thus
$$
P^{\delta\alpha}(x)=\sum_{|(x,0)|=|(y,1)|}n_H((x,0),(y,1))y.
$$
With respect to the splittings in Equation~\bref{eq:splitting} the induced map $H_*$ equals
$$
H_k=\left(\begin{matrix}h^{\beta\alpha}_{k-1}&0\\P^{\delta\alpha}_{k-1}&h^{\delta\gamma}_k\\ \end{matrix}\right).
$$
By \cite[Equation (2.11)]{rotvandervorst} the boundary maps $\Delta_k^\alpha$ on $M^{\alpha}\times [0,1]$ and $\Delta^\beta_k\times [0,1]$ on $M^{\beta}$ have the following form
$$
\Delta^\alpha_k=\left(\begin{matrix}-\partial^{\alpha}_{k-1}&0\\\Phi^{\gamma\alpha}_{k-1}&\partial^{\gamma}_k\\ \end{matrix}\right),\qquad \Delta^\beta_k=\left(\begin{matrix}-\partial^{\beta}_{k-1}&0\\\Phi^{\delta\beta}_{k-1}&\partial^{\delta}_k\\ \end{matrix}\right).
$$
Where the $\Phi$'s are the maps that induce isomorphisms in Morse homology. We know that $H_k$ is a chain map, i.e. $H_{k-1}\Delta_{k}^\alpha=\Delta^\delta_{k}H_k$. This implies
$$
\left(\begin{matrix}-h_{k-2}^{\beta\alpha}\partial_{k-1}^\alpha&0\\ -P_{k-2}^{\delta\alpha}\partial^\alpha_{k-1}+h_{k-1}^{\delta\gamma}\Phi^{\gamma\alpha}_{k-1}&h^{\delta\gamma}_{k-1}\partial^\gamma_k\\ \end{matrix}\right)=\left(\begin{matrix}-\partial_{k-1}^\beta h_{k-1}^{\beta\alpha}&0\\ \Phi_{k-1}^{\delta\beta}h^{\beta\alpha}_{k-1}+\partial^{\delta}_{k}P^{\delta\alpha}_{k-1}&\partial^\delta_kh^{\delta\gamma}_{k}\\ \end{matrix}\right).
$$
The lower left corner of this matrix equation gives the desired identity. 
\end{proof}

\section{Composition induced chain homotopies}
\label{sec:functoriality}
To show that compositions of map induce the same map as the compositions of the induced maps in Morse homology, we take in spirit also a homotopy between $h^{\gamma\beta}$ and $h^{\beta\alpha}$ and $h^{\gamma\beta}\circ h^{\beta\alpha}$. This is not directly possible as the maps have different domains and codomains, but we have an approximating homotopy, which is sufficient.   
% \begin{lemma}
% Let $\cQ^\alpha,\cQ^\beta$ and $\cQ^\gamma$ be Morse-Smale triples. Denote by $\psi_R^{\beta}\circ h^{\beta\alpha}(x)=\psi^{\beta}(R,h^{\beta\alpha}(x))$. Assume that $h^{\gamma\beta}\circ\psi_R^{\beta}\circ h^{\beta\alpha}$ is transverse for $R>0$ sufficiently large. Then there is an $R_0>0$ such that for all $R\geq R_0$ we have that 
% $$
% \left(h^{\gamma\beta}\circ\psi_R^{\beta}\circ h^{\beta\alpha}\right)_*=\left(h^{\gamma\beta}\circ\psi_{R_0}^{\beta}\circ h^{\beta\alpha}\right)_*.
% $$
% \label{lem:nobreaking}
% \end{lemma}
%  \begin{proof}
% \todo{How to prove this? The idea is that the homotopy is constant for $R$ sufficiently large....}\todo{I also have another proof, but I like the homotopy here...}
% \end{proof}

% Assume the situation of the lemma.

For a flow $\phi:\mR\times M\rightarrow M$ on a manifold $M$ we write $\phi_R:M\rightarrow M$ for the time $R$ map $\phi_R(x)=\phi(R,x)$. Let $h^{\beta\alpha}\in \scrT(\cQ^\alpha,\cQ^\beta)$ and $h^{\gamma\beta}\in \scrT(\cQ^\beta,\cQ^\gamma)$. We assume, up to possibly a perturbation of $h^{\gamma\beta}$, that the map $H:M^\alpha\times (0,\infty)\rightarrow M^\beta$ defined by
$$
H(p,R)=h^{\gamma\beta}\circ\psi_R^\beta \circ h^{\beta\alpha}(p)
$$
is transverse in the sense that
$$
H\bigr|_{W^u(x)\times (0,\infty)}\pitchfork W^s(z),
$$
for all $x\in \crit f^\alpha$ and $z\in \crit f^\gamma$. We then have moduli spaces
% For $x\in \crit f^\alpha$ and $y\in \crit f^\gamma$, with $|x|=|z|$ consider the moduli space
\begin{align*}
W_{h^{\gamma\beta},h^{\beta\alpha}}(x,z)=\{(p,R)\in &M^\alpha\times (0,\infty)\,|\,p\in W^u(x),\quad\text{and}\\ & h^{\gamma\beta}\circ\psi_{R}\circ h^{\beta\alpha}(p)\in W^s(z)\},
\end{align*}
of dimension $|x|-|z|+1$. The compactness issues if $R\rightarrow \infty$ are due to the breaking of orbits in $M^\beta$, which is described in the following proposition.
\begin{proposition}[Compactness]
Assume the situation as above, with $|x|=|z|$. Let $(p_k,R_k)\in W_{h^{\gamma\beta},h^{\beta\alpha}}(x,z)$ be a sequence with $R_k\rightarrow \infty$ as $k\rightarrow \infty$. Then there exists $y\in \crit f^\beta$ with $|x|=|y|=|z|$, and a subsequence $(p_k,R_k)$ such that $p_k\rightarrow p\in W_{h^{\beta\alpha}}(x,y)$, and $\psi_{R_k}^\beta\circ h^{\beta\alpha}(p_k)\rightarrow q\in W_{h^{\gamma\beta}}(y,z)$. 
\label{prop:functcompactness}
\end{proposition}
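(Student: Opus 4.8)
### Proof proposal

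The plan is to run the standard broken-orbit compactness argument, but now in the codomain $M^\beta$, using the fact that $h^{\beta\alpha}$ is proper on the relevant pieces (the domain $M^\alpha$ is closed here) and that $\psi^\beta$ is the negative gradient flow of the Morse--Smale pair $(f^\beta,g^\beta)$. First I would extract convergent subsequences: since $M^\alpha$ is compact, $p_k \to p$ after passing to a subsequence; choosing a further subsequence we may assume $p_k \in W(x,b)$ for a fixed $b\in\crit f^\alpha$ (finitely many options), and by the Broken Orbit Lemma $p\in W(a,b')$ with $|a|\le |x|$. Likewise $h^{\beta\alpha}(p_k) \to h^{\beta\alpha}(p)$ by continuity, and $h^{\beta\alpha}(p_k)$ lies on $W^u$ of some critical point; after a final subsubsequence assume $h^{\beta\alpha}(p_k)$ lies in a fixed $W(c,d)$ of $f^\beta$.

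Next I would analyze the forward-time limit. Set $q_k = \psi^\beta_{R_k}\circ h^{\beta\alpha}(p_k)$. Since $R_k\to\infty$ and $q_k$ lies in the compact set $M^\beta$ (we may again pass to a subsequence so that $q_k\to q$), the point $q$ must lie on $W^s$ of some critical point $z'\in\crit f^\beta$, and in fact $h^{\gamma\beta}(q)\in W^s(z)$ by continuity, while $q_k \to q$ forces, via the Broken Orbit Lemma applied to the forward flow, that $h^{\beta\alpha}(p_k)$ and hence the orbit segment joining $h^{\beta\alpha}(p_k)$ to $q_k$ breaks through a chain of critical points of $f^\beta$. The key dynamical point is that both the backward limit $h^{\beta\alpha}(p)$ (as a point of $W^s$ of some critical point) and the forward limit $q$ are tied to a single intermediate critical point $y\in\crit f^\beta$: because $R_k\to\infty$, the trajectory $t\mapsto \psi^\beta(t, h^{\beta\alpha}(p_k))$ spends more and more time near the broken orbit through $y$, so $h^{\beta\alpha}(p)\in W^s(y)$ and $q\in W^u(y)$. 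This gives $p\in W^u(x)\cap (h^{\beta\alpha})^{-1}(W^s(y)) = W_{h^{\beta\alpha}}(x,y)$ and $q\in W^u(y)\cap (h^{\gamma\beta})^{-1}(W^s(z)) = W_{h^{\gamma\beta}}(y,z)$.

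Finally I would pin down the indices by the usual dimension count. Transversality of $h^{\beta\alpha}$ gives $\dim W_{h^{\beta\alpha}}(x,y) = |x|-|y| \ge 0$, so $|y|\le |x|$; transversality of $h^{\gamma\beta}$ gives $\dim W_{h^{\gamma\beta}}(y,z) = |y|-|z|\ge 0$, so $|z|\le |y|$; combined with the hypothesis $|x|=|z|$ this forces $|x|=|y|=|z|$, and in that range both $W_{h^{\beta\alpha}}(x,y)$ and $W_{h^{\gamma\beta}}(y,z)$ are zero-dimensional, consistent with $p,q$ being honest points of these moduli spaces. I expect the main obstacle to be the middle step: rigorously justifying that the backward limit of $h^{\beta\alpha}(p_k)$ and the forward limit $q$ attach to the \emph{same} intermediate critical point $y$, rather than to two different ones separated by further breaking — i.e. ruling out that an entire subinterval of the orbit segments $[0,R_k]$ escapes to a lower critical level in between. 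This is handled by the same argument as in Proposition~\ref{prop:compactness}: any such intermediate breaking would place $q$ in $W^u(y)$ with $|y|$ strictly smaller, contradicting $\dim W_{h^{\gamma\beta}}(y,z)\ge 0$ once one also accounts for the constraint coming from the domain side, so the only consistent possibility is a single intermediate critical point $y$ with $|x|=|y|=|z|$. The breaking statement for the orbit segment itself is then exactly the content of the Broken Orbit Lemma applied on $M^\beta$.
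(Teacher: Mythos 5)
Your proposal is correct and follows essentially the same route as the paper: extract convergent subsequences of $p_k$ and $q_k=\psi^\beta_{R_k}\circ h^{\beta\alpha}(p_k)$, invoke the Broken Orbit Lemma together with the transversality dimension counts on both ends, use that $h^{\beta\alpha}(p_k)$ and $q_k$ lie on the same $\psi^\beta$-orbit to compare the forward limit of $h^{\beta\alpha}(p)$ with the backward limit of $q$, and squeeze all indices using $|x|=|z|$. The one imprecision is that $h^{\gamma\beta}(q)\in W^s(z)$ does not follow ``by continuity'' (stable manifolds are not closed, so a priori $h^{\gamma\beta}(q)\in W^s(z')$ with $|z'|\geq |z|$), but the index squeeze you perform at the end is precisely what rules this out, exactly as in the paper's proof.
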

\begin{proof}
By compactness of $M^\alpha$ we can choose a subsequence of $(p_k,R_k)$ such that $p_k\rightarrow p$, and choose a subsubsequence, such that also $q_k=\psi_{R_k}^\beta\circ h^{\beta\alpha}(p_k)\rightarrow q$. By similar arguments as in Proposition~\ref{prop:compactness}, using the transversality, $p\in W_{h^{\beta\alpha}}(x',y)$, with $|x|\geq |x'|\geq |y|$ and $q\in W_{h^{\gamma\beta}}(y',z')$ with $|y'|\geq |z'|\geq |z|$ where the equality holds if and only if $x=x'$, $z'=z$. Moreover since $q_k$ and $h^{\beta\alpha}(p_k)$ are on the same orbit, we must have that $|y'|\geq |y|$ with equality if and only if $y'=y$. Since $|x|=|z|$, it follows that $x'=x$ and $z'=z$, and therefore also $y'=y$.
\end{proof}
\begin{figure}
\def\svgwidth{.6\textwidth}\begin{center}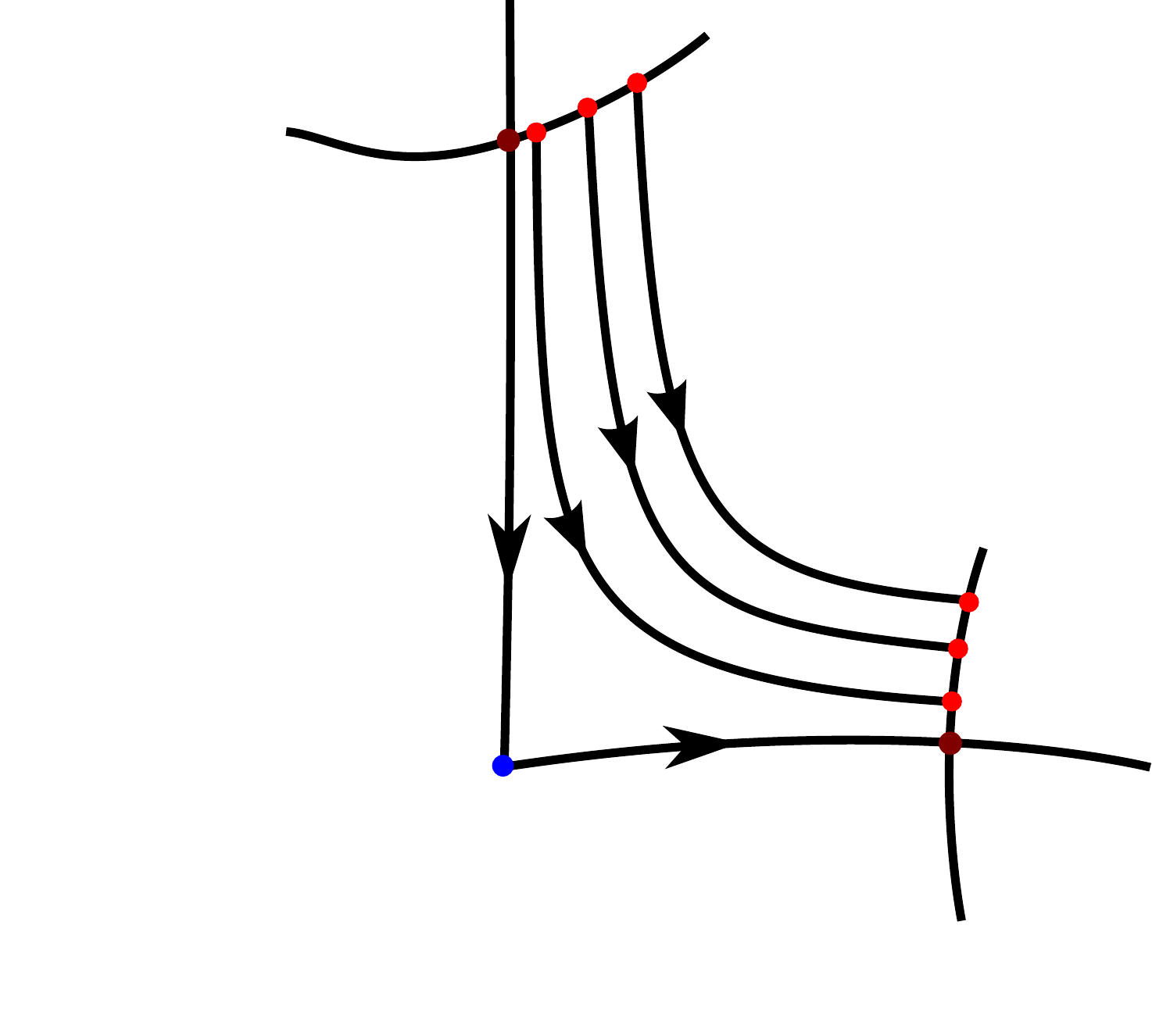\end{center}
\caption{The moduli space $W_{h^{\gamma\beta},h^{\beta\alpha}}(x,z)$ has non-compact ends if $R\rightarrow \infty$. These ends can be compactified by gluing in $W_{h^{\beta\alpha}}(x,y)\times W_{h^{\gamma\beta}}(y,z)$, for all $y\in \crit f^\beta$, with $|x|=|y|=|z|$ as in Proposition~\ref{prop:functgluing}.}
\label{fig:functorialgluing}
\end{figure}

\begin{proposition}[Gluing]
Assume the situation above. Let $x\in \crit f^\alpha$, and $y\in \crit f^\beta$ and $z\in \crit f^\gamma$, with $|x|=|y|=|z|$. Then there exists an $R_0>0$, and a gluing embedding
$$
\#^3: W_{h^{\beta\alpha}}(x,y)\times [R_0,\infty)\times W_{h^{\gamma\beta}}(y,z)\rightarrow W_{h^{\gamma\beta},h^{\beta\alpha}}(x,z).
$$
Moreover, if $(p_k,R_k)\in W_{h^{\gamma\beta},h^{\beta\alpha}}(x,z)$ with $p_k\rightarrow p\in W_{h^{\beta\alpha}}(x,y)$ and $\psi^\beta(R_k,h^{\beta\alpha}(p_k))\rightarrow q\in W_{h^{\gamma\beta}}(y,z)$, then the sequence lies in the image of the embedding for $k$ sufficiently large. 
\label{prop:functgluing}
\end{proposition}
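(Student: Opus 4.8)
The plan is to mimic the standard Morse-theoretic gluing construction, but now applied to the composite flow picture, using the Gluing Lemma~\ref{lem:gluing} as the technical engine exactly as was done in Proposition~\ref{prop:gluing2}. Fix $p\in W_{h^{\beta\alpha}}(x,y)$ and $q\in W_{h^{\gamma\beta}}(y,z)$. The key geometric objects live in $M^\beta$: the point $h^{\beta\alpha}(p)\in W^s(y)\subset M^\beta$ and the point $q\in W^u(y)\subset M^\beta$. First I would produce a disc $D^{|y|}\subset M^\beta$ containing $h^{\beta\alpha}(p)$ that is the $h^{\beta\alpha}$-image of a disc through $p$ inside $W^u(x)$; this is possible by the transversality of $h^{\beta\alpha}$ (so that $h^{\beta\alpha}|_{W^u(x)}$ is a submersion onto a piece of $M^\beta$ transverse to $W^s(y)$, hence $D^{|y|}\pitchfork W^s(y)$ in $M^\beta$ in a single point $h^{\beta\alpha}(p)$), using that $\dim W^u(x)-\dim W_{h^{\beta\alpha}}(x,y)=|y|$ from Proposition~\ref{prop:manifolds}. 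Symmetrically, by transversality of $h^{\gamma\beta}$ I would produce a disc $E^{m^\beta-|y|}\subset M^\beta$ through $q$ with $E^{m^\beta-|y|}\pitchfork W^u(y)$ at the single point $q$ and with $h^{\gamma\beta}(E^{m^\beta-|y|})\subset W^s(z)$.

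Next I would apply Lemma~\ref{lem:gluing} to the Morse function $f^\beta$, metric $g^\beta$, critical point $y$, and the discs $D^{|y|}$, $E^{m^\beta-|y|}$. This yields $R_0>0$ and an injective curve $\rho:[R_0,\infty)\to M^\beta$ with $\psi^\beta(-R,\rho(R))\in D^{|y|}$ and $\psi^\beta(R,\rho(R))\in E^{m^\beta-|y|}$, together with the stated limits $\rho(R)\to y$, $\psi^\beta(-R,\rho(R))\to h^{\beta\alpha}(p)$, $\psi^\beta(R,\rho(R))\to q$. Then I define the gluing map by declaring $p\,\#^3_R\,q$ to be the preimage under $h^{\beta\alpha}$ of the point $\psi^\beta(-R,\rho(R))\in D^{|y|}$, pulled back along the piece of disc through $p$ in $W^u(x)$ on which $h^{\beta\alpha}$ is a diffeomorphism; concretely, if $\widetilde D\subset W^u(x)$ is that disc with $h^{\beta\alpha}(\widetilde D)=D^{|y|}$, set $p\,\#^3_R\,q=(h^{\beta\alpha}|_{\widetilde D})^{-1}(\psi^\beta(-R,\rho(R)))$. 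One then checks $p\,\#^3_R\,q\in W^u(x)$ by construction, and that $h^{\gamma\beta}\circ\psi^\beta_{2R}\circ h^{\beta\alpha}(p\,\#^3_R\,q)=h^{\gamma\beta}(\psi^\beta(R,\rho(R)))\in h^{\gamma\beta}(E^{m^\beta-|y|})\subset W^s(z)$, so that $(p\,\#^3_R\,q,2R)\in W_{h^{\gamma\beta},h^{\beta\alpha}}(x,z)$ (reparametrizing $R\mapsto 2R$ if one wants the time coordinate to match $\rho$). Injectivity and the embedding property follow from injectivity of $\rho$, of $h^{\beta\alpha}|_{\widetilde D}$, and smooth dependence on $(p,q)$; varying $p$ over $W_{h^{\beta\alpha}}(x,y)$ and $q$ over $W_{h^{\gamma\beta}}(y,z)$ (both $0$-dimensional here since $|x|=|y|=|z|$, so in fact finite sets) gives the full map on $W_{h^{\beta\alpha}}(x,y)\times[R_0,\infty)\times W_{h^{\gamma\beta}}(y,z)$.

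Finally, for the surjectivity-onto-the-end statement, I would take a sequence $(p_k,R_k)$ as in Proposition~\ref{prop:functcompactness} with $p_k\to p\in W_{h^{\beta\alpha}}(x,y)$ and $\psi^\beta(R_k,h^{\beta\alpha}(p_k))\to q\in W_{h^{\gamma\beta}}(y,z)$. For $k$ large, $h^{\beta\alpha}(p_k)$ lies in the disc $D^{|y|}$ (since $D^{|y|}$ is a neighborhood of $h^{\beta\alpha}(p)$ inside the image and $h^{\beta\alpha}(p_k)\to h^{\beta\alpha}(p)$) and $\psi^\beta(R_k,h^{\beta\alpha}(p_k))$ lies in the disc $E^{m^\beta-|y|}$; then the point $\psi^\beta(R_k,h^{\beta\alpha}(p_k))$ is exactly the output of the gluing construction for the parameter $R_k$, i.e.\ it must equal $\rho(R_k)$ by the uniqueness of the intersection $D^{|y|}_{R}\cap E^{m^\beta-|y|}_{-R}$ built into Lemma~\ref{lem:gluing} (the ``no orbit through $D'\setminus\bigcup\psi(-R,\rho(R))$ meets $E'$'' clause), whence $(p_k,R_k)$ is in the image of $\#^3$. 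The main obstacle I anticipate is bookkeeping the two maps $h^{\beta\alpha}$ and $h^{\gamma\beta}$ simultaneously — in particular making precise that transversality of $h^{\beta\alpha}$ really gives a disc in $M^\beta$ through $h^{\beta\alpha}(p)$ transverse to $W^s(y)$ with a single intersection point (one must restrict to a small enough neighborhood where $h^{\beta\alpha}|_{W^u(x)}$ has constant rank and the transverse intersection is isolated), and symmetrically for $h^{\gamma\beta}$ — together with the reparametrization of the flow-time coordinate so that the output genuinely lands in $W_{h^{\gamma\beta},h^{\beta\alpha}}(x,z)$ rather than in a shifted copy; the actual analytic content is entirely contained in Lemma~\ref{lem:gluing} and needs no repetition.
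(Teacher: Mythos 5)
Your construction is essentially identical to the paper's proof: both choose a disc in $W^u(x)$ on which $h^{\beta\alpha}$ is injective with image a $|y|$-disc in $M^\beta$ transverse to $W^s(y)$, a disc through $q$ transverse to $W^u(y)$ whose $h^{\gamma\beta}$-image lies in $W^s(z)$, apply Lemma~\ref{lem:gluing} to $(f^\beta,g^\beta)$ at $y$, pull the point $\psi^\beta(-R,\rho(R))$ back through $h^{\beta\alpha}$, and record the time coordinate as $2R$ with $R_0=R_0'/2$. The paper likewise leaves the end-identification to the final clause of Lemma~\ref{lem:gluing}, so your proposal is correct and follows the same route.
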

\begin{proof}
See Figure~\ref{fig:functorialgluing}. Let $u\in W_{h^{\beta\alpha}}(x,y)$, and $v\in W_{h^{\gamma\beta}}(y,z)$. By transversality of $h^{\beta\alpha}$, we choose a disc $D^{|y|}\subset W^u(x)$ such that $h^{\beta\alpha}\Bigr|_{D^{|y|}}$ is injective and the image $h^{\beta\alpha}(D^{|y|})$ intersects $W^s(y)$ transversely in $h^{\beta\alpha}(u)$. We also choose a disc $E^{m^\beta-|y|}\subset M^\beta$, intersecting $W^u(y)$ transversely in $v$, whose image is contained in $W^s(z)$, cf.~Figure~\ref{fig:functorialgluing}. By Lemma~\ref{lem:gluing}, we get an $R'_0>0$ and a map $\rho:[R'_0,\infty)\rightarrow M^\beta$. Set $R_0=R'_0/2$ and define $\#^3(u,R,v)=(\left(h^{\beta\alpha}\right)^{-1}(\psi(-R,\rho(R))),2R)$. Here we use the fact that $h^{\beta\alpha}\Bigr|_{D^{|y|}}$ is bijective to $h^{\beta\alpha}(D^{|y|})$. The properties stated follow from Lemma \ref{lem:gluing}.
\end{proof}

\begin{proposition}
Consider the situation above. Then there exist an $R>0$ such that $h^{\gamma\beta}\circ \psi_R^\beta\circ h^{\beta\alpha}\in \scrT(\cQ^\alpha,\cQ^\gamma)$ and the moduli space
$$
W_{h^{\gamma\beta},h^{\beta\alpha}}(x,z,R):=W_{h^{\gamma\beta},h^{\beta\alpha}}(x,z)\bigcap M^\alpha\times (R,\infty),
$$
has a compactification as a smooth oriented manifold with boundary
\begin{align*}
\widehat{W}_{h^{\gamma\beta},h^{\beta\alpha}}(x,z,R)&=W_{h^{\gamma\beta},h^{\beta\alpha}}(x,z,R)\bigcup -W_{h^{\gamma\beta}\circ\psi_R^\beta \circ h^{\beta\alpha}}(x,z)\\&\bigcup_{|y|=|x|} W_{h^{\beta\alpha}}(x,y)\times W_{h^{\gamma\beta}}(y,z).
\end{align*}
\label{prop:functcompactification}
\end{proposition}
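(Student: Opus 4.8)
The plan is to assemble the compactification from the three ingredients already in place: the compactness statement (Proposition~\ref{prop:functcompactness}), the gluing embedding $\#^3$ (Proposition~\ref{prop:functgluing}), and the interior structure of $W_{h^{\gamma\beta},h^{\beta\alpha}}(x,z)$ as a smooth oriented manifold of dimension $|x|-|z|+1 = 1$. First I would fix, using genericity of transverse maps together with the transversality hypothesis on $H$, a value of $R$ so large that $h^{\gamma\beta}\circ\psi_R^\beta\circ h^{\beta\alpha}\in\scrT(\cQ^\alpha,\cQ^\gamma)$; this makes the slice $\{R'=R\}$ a transverse, hence smooth, boundary piece, contributing $-W_{h^{\gamma\beta}\circ\psi_R^\beta\circ h^{\beta\alpha}}(x,z)$ (the sign is the outward-normal convention at the $R'=R$ end, where $(R,\infty)$ lies to the right). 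The interior $W_{h^{\gamma\beta},h^{\beta\alpha}}(x,z,R)$ is then a one-dimensional manifold with one genuine boundary at $R'=R$ and a possibly non-compact end as $R'\to\infty$.

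Next I would analyze the $R'\to\infty$ end. Proposition~\ref{prop:functcompactness} says that along any sequence $(p_k,R_k)$ with $R_k\to\infty$ there is, after passing to a subsequence, a critical point $y\in\crit f^\beta$ with $|x|=|y|=|z|$ such that $p_k\to p\in W_{h^{\beta\alpha}}(x,y)$ and $\psi_{R_k}^\beta h^{\beta\alpha}(p_k)\to q\in W_{h^{\gamma\beta}}(y,z)$. Since $|x|=|y|=|z|$, both $W_{h^{\beta\alpha}}(x,y)$ and $W_{h^{\gamma\beta}}(y,z)$ are zero-dimensional, so these are finitely many isolated points, and the broken limits form the finite set $\bigsqcup_{|y|=|x|}W_{h^{\beta\alpha}}(x,y)\times W_{h^{\gamma\beta}}(y,z)$. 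Conversely, Proposition~\ref{prop:functgluing} produces, for each such pair $(u,v)$, a half-open ray $\#^3(u,[R_0,\infty),v)\subset W_{h^{\gamma\beta},h^{\beta\alpha}}(x,z)$ limiting onto $(u,v)$; the moreover-clause of that proposition guarantees that every sequence running to infinity eventually enters the image of one of these rays, so the $\#^3$-images exhaust a neighborhood of the end. Thus the end of $W_{h^{\gamma\beta},h^{\beta\alpha}}(x,z,R)$ is a disjoint union of half-open intervals indexed by $\bigsqcup_{|y|=|x|}W_{h^{\beta\alpha}}(x,y)\times W_{h^{\gamma\beta}}(y,z)$, and gluing in these points completes the manifold. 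Combined with the $R'=R$ boundary this yields $\widehat W_{h^{\gamma\beta},h^{\beta\alpha}}(x,z,R)$ as a compact one-dimensional manifold with boundary; compactness of the interior away from $R'\to\infty$ is immediate since $M^\alpha$ is closed and $R'$ is bounded there.

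Finally I would address orientations, which is the step I expect to be the main obstacle: one must check that $\#^3$ is orientation-compatible (up to a determined sign) with the induced orientations on $W_{h^{\beta\alpha}}(x,y)$, $W_{h^{\gamma\beta}}(y,z)$ and $W_{h^{\gamma\beta},h^{\beta\alpha}}(x,z)$ from Proposition~\ref{prop:manifolds}, just as was done for $\sigma^1,\sigma^2$ in the preceding section. The argument is the same in spirit: write down the exact sequences of oriented vector bundles defining the three orientations at a glued point, use parallel transport along the relevant contractible stable/unstable manifolds to identify normal bundles with $T_zW^u(z)$, and track the direction of the gluing parameter $R$ relative to the gradient flow in $M^\beta$ (the point $y$ sits in the closure of the flowed-out moduli space, and the orientation $[\dot{h^{\beta\alpha}(u)}(\infty),\dot v(-\infty)]$ is compared with $[\tfrac{d}{dR}\rho(R),-\nabla f^\beta]$). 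Because here $|x|-|z|+1=1$ and the glued factors are points, no reparametrization-direction subtlety beyond a single overall sign arises; I would simply record that sign (it is the reason no explicit $\pm$ decorates the $W_{h^{\beta\alpha}}(x,y)\times W_{h^{\gamma\beta}}(y,z)$ term, the chosen orientation conventions being set up so that it is $+$). The smoothness of the glued manifold near the added points follows, as usual, from the injectivity and the transversality-to-the-gradient ($g(\tfrac{d}{dR}\rho(R),-\nabla_g f)\neq0$) properties in Lemma~\ref{lem:gluing}, which make each $\#^3$-ray a $C^1$ collar.
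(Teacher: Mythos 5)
Your overall architecture matches the paper's proof: you identify the two ends correctly (the time-$R$ slice, and the $R'\to\infty$ end handled by Propositions~\ref{prop:functcompactness} and~\ref{prop:functgluing}), and the orientation discussion is in the right spirit. But there is a genuine gap: the claim that ``compactness of the interior away from $R'\to\infty$ is immediate since $M^\alpha$ is closed and $R'$ is bounded there'' is false, and it conceals the actual reason the proposition restricts to $M^\alpha\times(R,\infty)$ for a \emph{large} $R$. Closedness of $M^\alpha$ only gives convergent subsequences $(p_k,R_k)\to(p,R')$ with $R'$ finite; it does not put the limit back in the moduli space. Exactly as in Proposition~\ref{prop:compactness}, the orbit through $p_k$ can break at an intermediate critical point $y\in\crit f^\alpha$ with $|y|=|x|-1$, and the orbit through $h^{\gamma\beta}\circ\psi^\beta_{R_k}\circ h^{\beta\alpha}(p_k)$ can break at some $z'\in\crit f^\gamma$ with $|z'|=|z|+1$. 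The corresponding limiting strata $M(x,y)\times W_{h^{\gamma\beta},h^{\beta\alpha}}(y,z)$ and $W_{h^{\gamma\beta},h^{\beta\alpha}}(x,z')\times M(z',z)$ are zero-dimensional, so they are not excluded by dimension counting; they are codimension-one degenerations of the one-dimensional space $W_{h^{\gamma\beta},h^{\beta\alpha}}(x,z)$, and they do not appear in the stated boundary of $\widehat W_{h^{\gamma\beta},h^{\beta\alpha}}(x,z,R)$.

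The missing step is therefore that $R$ must be chosen large enough to exclude these finite-$R'$ breakings, not merely large enough that the slice map is transverse. The paper extracts this from Proposition~\ref{prop:functcompactness} by a compactness argument: if for every $n$ there were a limit point $(p_n,R'_n)$ with $R'_n>n$ and $p_n\notin W^u(x)$ (or with image outside $W^s(z)$), one could produce a sequence in the moduli space with $R_k\to\infty$ whose limit lies in the closed set $\overline{W^u(x)}\setminus W^u(x)$, contradicting the conclusion of Proposition~\ref{prop:functcompactness} that such limits lie in $W_{h^{\beta\alpha}}(x,y)\subset W^u(x)$. Once $R$ is fixed this way (in addition to the transversality of $h^{\gamma\beta}\circ\psi^\beta_R\circ h^{\beta\alpha}$, which you did address via parametric transversality), the rest of your argument goes through. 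The remaining difference from the paper is cosmetic: the paper keeps the interval open and attaches $-W_{h^{\gamma\beta}\circ\psi^\beta_R\circ h^{\beta\alpha}}(x,z)$ by the gluing map $(p,R')\mapsto(p,R+1/R')$, rather than treating the slice as an honest boundary of $M^\alpha\times[R,\infty)$.
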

\begin{proof}
Proposition~\ref{prop:functcompactness} shows that all sequences $(p_k,R_k)$ in the moduli space with $R_k\rightarrow \infty$ have that the limits $p_k\rightarrow p$ and $h^{\gamma\beta}\circ\psi_{R_k}^\beta\circ h^{\beta\alpha}(p_k)\rightarrow q$ converge in the interior of $W^u(x)$ and $W^s(z)$. By compactness of the domain this implies that there is an $R>0$ such that $W_{h^{\gamma\beta},h^{\beta\alpha}}(x,z,R)$ has no limit points outside $W^u(x)$, and similarly that $h^{\gamma\beta}\circ\psi_R^\beta\circ h^{\beta\alpha}(p_k)$ cannot converge to a point outside $W^s(z)$. By parametric transversality~\cite[Theorem 2.7 page 79]{Hirsch} we can assume that $h^{\gamma\beta}\circ \psi_R^\beta\circ h^{\beta\alpha}\in \scrT(\cQ^\alpha,\cQ^\gamma)$. The space only has two non-compact ends. One is counted by $\bigcup_{|y|=|x|} W_{h^{\beta\alpha}}(x,y)\times W_{h^{\gamma\beta}}(y,z)$ for which we have constructed a gluing map. The other non-compact end is counted $W_{h^{\gamma\beta}\circ\psi_R^\beta \circ h^{\beta\alpha}}(x,z)$ where the gluing map is given by sending $p\in W_{h^{\gamma\beta}\circ \psi_R^\beta\circ h^{\alpha\beta}}(x,z)$ and $R'\in (1,\infty)$ to $(p,R+\frac{1}{R'})\in W_{h^{\gamma\beta},h^{\beta\alpha}}(x,z,R)$.
\end{proof}

\begin{proposition}
Suppose $h^{\beta\alpha}\in \scrT(\cQ^\alpha,\cQ^\beta)$, $h^{\gamma\beta}\in \scrT(\cQ^\beta,\cQ^\gamma)$ and $h^{\gamma\beta}\circ h^{\beta\alpha}\in \scrT(\cQ^\alpha,\cQ^\gamma)$. Then $h_*^{\gamma\beta}\circ h_*^{\beta\alpha}$ and $(h^{\gamma\beta}\circ h^{\beta\alpha})_*$ are chain homotopic, i.e.~there is a degree $+1$ map $P_*^{\gamma\alpha}:C_*(\cQ^\alpha)\rightarrow C_*(\cQ^\gamma)$, such that
$$
h_k^{\gamma\beta} h_k^{\beta\alpha}-\left(h^{\gamma\beta}\circ h^{\beta\alpha}\right)_k=P_{k-1}^{\gamma\alpha}\partial^\alpha_k+\partial_{k+1}^\gamma P_k^{\gamma\alpha},\quad\text{for all}\quad k.
$$
\label{prop:functoriality}
\end{proposition}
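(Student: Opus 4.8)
The plan is to extract the chain homotopy $P^{\gamma\alpha}_*$ from the oriented boundary count of the one-dimensional moduli space $W_{h^{\gamma\beta},h^{\beta\alpha}}(x,z,R)$ constructed in Proposition~\ref{prop:functcompactification}, exactly in the way the chain map property was extracted from $\widehat{W_{h^{\beta\alpha}}}(x,y')$ in Proposition~\ref{prop:chain}. Concretely, for $x\in\crit f^\alpha$ and $z\in\crit f^\gamma$ with $|x|=|z|+1$, Proposition~\ref{prop:functcompactification} (applied with this index difference, so that $W_{h^{\gamma\beta},h^{\beta\alpha}}(x,z,R)$ has dimension $|x|-|z|+1 = 2$ --- wait, one checks the relevant counting space is the one with $|x|=|z|+1$ where the glued-in pieces are zero-dimensional) gives a compact oriented one-dimensional manifold with boundary, whose boundary is the disjoint union of $W_{h^{\gamma\beta}\circ\psi_R^\beta\circ h^{\beta\alpha}}(x,z)$ with a sign, the fibered products $\bigcup_{|y|=|x|}W_{h^{\beta\alpha}}(x,y)\times W_{h^{\gamma\beta}}(y,z)$, and the ``ordinary'' breaking boundary in the domain $M^\alpha$ and codomain $M^\gamma$ of the type already analyzed in Propositions~\ref{prop:compactness} and~\ref{prop:gluing2}. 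Since the signed count of boundary points of a compact oriented $1$-manifold is zero, summing these contributions yields the desired identity once $P^{\gamma\alpha}_*$ is defined as the count over $W_{h^{\gamma\beta},h^{\beta\alpha}}(x,z,R)$ with $|x|=|z|+1$.

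First I would fix a sufficiently large $R$ as in Proposition~\ref{prop:functcompactification} so that $h^{\gamma\beta}\circ\psi_R^\beta\circ h^{\beta\alpha}\in\scrT(\cQ^\alpha,\cQ^\gamma)$ and define
\[
P^{\gamma\alpha}_k(x)=\sum_{|z|=|x|+1} n_{h^{\gamma\beta},h^{\beta\alpha}}(x,z)\,z,
\]
where $n_{h^{\gamma\beta},h^{\beta\alpha}}(x,z)$ is the signed count of the zero-dimensional space $W_{h^{\gamma\beta},h^{\beta\alpha}}(x,z,R)$, which occurs when $|x|-|z|+1=0$, i.e.\ $|z|=|x|+1$; this is a finite set by the compactness in Proposition~\ref{prop:functcompactness}. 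Then for $|x|=|z|+1$ the space $W_{h^{\gamma\beta},h^{\beta\alpha}}(x,z,R)$ is $1$-dimensional, and I would invoke Proposition~\ref{prop:functcompactification} together with the breaking/gluing analysis of Sections~\ref{sec:chainmap} (adapted to the parametrized family $H(p,R)=h^{\gamma\beta}\circ\psi_R^\beta\circ h^{\beta\alpha}(p)$, whose domain breaking is counted by $M(x,x'')\times W_{h^{\gamma\beta},h^{\beta\alpha}}(x'',z,R)$ and whose codomain breaking is counted by $W_{h^{\gamma\beta},h^{\beta\alpha}}(x,z'',R)\times M(z'',z)$) to identify the full boundary. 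Reading off the signed count $0 = \partial\widehat{W}_{h^{\gamma\beta},h^{\beta\alpha}}(x,z,R)$ term by term gives
\[
\bigl(h^{\gamma\beta}\circ h^{\beta\alpha}\bigr)_k - h^{\gamma\beta}_k h^{\beta\alpha}_k + \partial^\gamma_{k+1}P^{\gamma\alpha}_k + P^{\gamma\alpha}_{k-1}\partial^\alpha_k = 0,
\]
i.e.\ the stated identity after rearranging signs; here one uses that the count over $W_{h^{\gamma\beta}\circ\psi_R^\beta\circ h^{\beta\alpha}}(x,z)$ equals $(h^{\gamma\beta}\circ h^{\beta\alpha})_k$ because $h^{\gamma\beta}\circ\psi_R^\beta\circ h^{\beta\alpha}$ is homotopic to $h^{\gamma\beta}\circ h^{\beta\alpha}$ (via $R\mapsto 0$) and both are transverse, so Proposition~\ref{prop:homotopy} identifies their induced maps on chains up to chain homotopy --- more carefully, one should absorb this comparison into $P^{\gamma\alpha}_*$ itself, or simply note the two transverse representatives of the same homotopy class give chain homotopic maps and fold that homotopy in.

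The main obstacle I expect is orientation bookkeeping: one must check that the induced orientation on $W_{h^{\gamma\beta},h^{\beta\alpha}}(x,z,R)$ (coming from the exact sequence of Proposition~\ref{prop:manifolds} applied to the parametrized map $H$, with the extra $(0,\infty)$ factor contributing via the fiber-first convention) is compatible with the gluing map $\#^3$ of Proposition~\ref{prop:functgluing} and with the two ``ordinary'' breaking gluing maps, so that the boundary identity carries the signs exactly as written. This is the analogue of the $\sigma^1$/$\sigma^2$ computation: I would expect $\#^3$ to contribute without an extra sign (the new $R$-direction matching the glued orbit parameter as in Lemma~\ref{lem:gluing}), the domain-breaking gluing to be orientation preserving, and the codomain-breaking gluing to be orientation reversing, together with the explicit sign on $-W_{h^{\gamma\beta}\circ\psi_R^\beta\circ h^{\beta\alpha}}(x,z)$ recorded already in Proposition~\ref{prop:functcompactification}; reconciling all of these to land on the precise right-hand side $P^{\gamma\alpha}_{k-1}\partial^\alpha_k+\partial^\gamma_{k+1}P^{\gamma\alpha}_k$ is the delicate part, but it is a finite sign check rather than new analysis.
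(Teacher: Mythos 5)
Your plan --- read the chain homotopy off the boundary of the one-dimensional parametrized moduli space --- is the standard alternative route, but as written it has two genuine problems. First, an index slip that matters: the identity to be proved is an equality of degree-zero maps, so it must be read off from $W_{h^{\gamma\beta},h^{\beta\alpha}}(x,z,R)$ with $|x|=|z|$, which has dimension $|x|-|z|+1=1$; the space with $|x|=|z|+1$ that you repeatedly point to is two-dimensional. Second, and more seriously, the boundary strata you need in order to produce $P^{\gamma\alpha}_{k-1}\partial^\alpha_k+\partial^\gamma_{k+1}P^{\gamma\alpha}_k$ --- namely $M(x,x'')\times W_{h^{\gamma\beta},h^{\beta\alpha}}(x'',z,R)$ and $W_{h^{\gamma\beta},h^{\beta\alpha}}(x,z'',R)\times M(z'',z)$ --- are \emph{empty} in the space of Proposition~\ref{prop:functcompactification}: the whole point of restricting to the range $(R,\infty)$ with $R$ large is that Proposition~\ref{prop:functcompactness} then excludes breaking in the domain and codomain, so the compactified $1$-manifold has exactly the two ends listed there, and its boundary count gives the \emph{exact} chain-level identity $(h^{\gamma\beta}\circ\psi^\beta_R\circ h^{\beta\alpha})_*=h^{\gamma\beta}_*h^{\beta\alpha}_*$, with no chain-homotopy correction. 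Consequently the map $P^{\gamma\alpha}$ you define by counting the zero-dimensional parametrized spaces over $(R,\infty)$ never enters the identity. To make your version work you would have to run the family over all of $(0,\infty)$, which requires a compactness and gluing analysis for breaking at bounded $R$ and a transversality/collar statement at the $R\to 0$ end; Propositions~\ref{prop:functcompactness} and~\ref{prop:functgluing} only treat $R\to\infty$ and supply none of this.

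The paper avoids all of that by taking the chain homotopy entirely from Proposition~\ref{prop:homotopy}: since $(h^{\gamma\beta}\circ\psi^\beta_R\circ h^{\beta\alpha})_*$ equals $h^{\gamma\beta}_*h^{\beta\alpha}_*$ on the nose for $R$ large, and $\lambda\mapsto h^{\gamma\beta}\circ\psi^\beta_{\lambda R}\circ h^{\beta\alpha}$ is a homotopy of transverse maps ending at $h^{\gamma\beta}\circ h^{\beta\alpha}$, the homotopy-invariance chain homotopy (the dynamical cone construction) \emph{is} the $P^{\gamma\alpha}$ of the statement. You do invoke Proposition~\ref{prop:homotopy} at the end for exactly this comparison, so once you notice that the extra boundary strata vanish for large $R$, your argument collapses to the paper's and your parametrized $P^{\gamma\alpha}$ plays no role. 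Either drop it and present the argument as the paper does, or commit fully to the direct parametrized proof and supply the missing compactness, gluing and $R\to 0$ analysis.
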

\begin{proof}
By Proposition~\ref{prop:functcompactification} we have that 

\begin{align*}
&\left(\left(h^{\gamma\beta}\circ\psi^\beta_R\circ h^{\beta\alpha}\right)_*-h^{\gamma\beta}_*h^{\beta\alpha}_*\right)(x)\\
&\qquad=\sum_{|z|=|x|}\left(n_{h^{\gamma\beta}\circ\psi_R^\beta\circ h^{\beta\alpha}}(x,z)-\sum_{|y|=|x|}n_{h^{\gamma\beta}}(y,z)n_{h^{\beta\alpha}}(x,y)\right)z\\
&\qquad=\sum_{|z|=|x|}\partial\widehat W(x,z,R)z=0.
\end{align*}
The homotopy between $h^{\gamma\beta}\circ h^{\beta\alpha}$ and $h^{\gamma\beta}\circ\psi^\beta_R\circ h^{\beta\alpha}$ sending $\lambda\in[0,1]$ to $h^{\gamma\beta}\circ\psi^\beta_{\lambda R}\circ h^{\beta\alpha}$, induces a chain homotopy by Proposition~\ref{prop:homotopy}. That is, there a degree $+1$ map $P_*$ such that
$$
\left(h^{\gamma\beta}\circ\psi^\beta_R\circ h^{\beta\alpha}\right)_k-\left(h^{\gamma\beta}\circ h^{\beta\alpha}\right)_k=-\partial^\gamma_{k+1}P_k^{\gamma\alpha}-P_{k-1}^{\gamma\alpha}\partial_k^{\alpha}\quad\text{for all} \quad k.
$$
Combining the last two equations gives the chain homotopy.

\end{proof}

\section{Isolation properties of maps}
\label{sec:isolation}

For the remainder of this paper, we do not assume that the base manifolds are necessarily closed. We localize the discussion on functoriality in Morse homology on closed manifolds, and study functoriality for local Morse homology, as well as functoriality for Morse-Conley-Floer homology. For this we need isolation properties of maps.

For a manifold equipped with a flow $\phi$, denote the forwards and backwards orbit as follows
$$
\cO_+(p):=\{\phi(t,p)\,|\, t\geq 0\},\quad 
\cO_-(p):=\{\phi(t,p)\,|\, t\leq 0\}.
$$ 
\begin{definition}
\label{defi:isolatedmap}
Let $M^\alpha,M^\beta$ be manifolds, equipped with flows $\phi^\alpha,\phi^\beta$. Let $N^\alpha,N^\beta$ be isolating neighborhoods. A map $h^{\beta\alpha}:M^\alpha\rightarrow M^\beta$ is an \emph{isolated map} (with respect to $N^\alpha, N^\beta)$ if the set
\begin{equation}
S_{h^{\beta\alpha}}:=\{p\in N^\alpha\,|\, \cO_-(p)\subset N^\alpha, \cO_+(h^{\beta\alpha}(p))\subset N^\beta\},
\label{eq:isolatedmapset}
\end{equation}
satisfies the property that for all $p\in S_{h^{\beta\alpha}}$ we have that
$$
\cO_-(p)\subset \Int N^\alpha, \qquad \cO_+(h^{\beta\alpha}(p))\subset \Int N^\beta.
$$
\end{definition}
Compositions of isolated maps need not be isolated, however the condition is open in the compact-open topology. 
\begin{proposition}
The condition of isolated maps is open: Let $M^\alpha,M^\beta$ be manifolds, equipped with flows $\phi^\alpha,\phi^\beta$. Let $N^\alpha,N^\beta$ be isolating neighborhoods and $h^{\beta\alpha}:M^\alpha\rightarrow M^\beta$ an isolated map. Then there exist an open neighborhood $A$ of $(\phi^\alpha,h^{\beta\alpha},\phi^\beta)$ in
\begin{align*}
\scrC=\{(\tilde\phi^\alpha,\tilde h^{\beta\alpha},\tilde \phi^\beta)&\in C^\infty(\mR\times M^\alpha,M^\alpha)\times C^\infty(M^\alpha, M^\beta)\times C^\infty(\mR\times M^\beta,M^\beta)\,|\, \\
&N^\alpha, N^\beta \text{ are isolating neighborhoods of flows } \tilde\phi^\alpha,\tilde \phi^\beta\}
\end{align*}
equipped with the compact open topology\footnote{The space $\scrC$ itself is open in the space of triples $(\tilde\phi^\alpha,\tilde h^{\beta\alpha},\tilde \phi^\beta)$ with $\tilde \phi^\alpha$ and $\tilde\phi^\beta$ flows, cf.\ \cite[Proposition 3.8]{rotvandervorst}.} such $\tilde h^{\beta\alpha}$ is isolated with respect to $N^\alpha,N^\beta$ and flows $\tilde \phi^\alpha$ and $\tilde \phi^\beta$. 
\label{prop:isolatedmapsareopen}
\end{proposition}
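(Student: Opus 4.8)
The plan is to argue by contradiction, producing a sequence of triples converging to $(\phi^\alpha,h^{\beta\alpha},\phi^\beta)$ in the compact-open topology and extracting a limiting point that violates the isolation property of $h^{\beta\alpha}$ itself. First I would fix a compact exhaustion: since $S_{h^{\beta\alpha}}$ is contained in the compact set $N^\alpha$, and isolated maps are characterized by the condition that for every $p\in S_{h^{\beta\alpha}}$ the backward orbit $\cO_-(p)$ lies in $\Int N^\alpha$ and the forward orbit $\cO_+(h^{\beta\alpha}(p))$ lies in $\Int N^\beta$, I would reformulate this as a condition on finite-time pieces of orbits. Concretely, for $R>0$ let
\[
S^R_{h^{\beta\alpha}}=\{p\in N^\alpha\,|\,\phi^\alpha(t,p)\in N^\alpha\text{ for }t\in[-R,0],\ \phi^\beta(t,h^{\beta\alpha}(p))\in N^\beta\text{ for }t\in[0,R]\}.
\]
These are compact, nested as $R$ increases, and $S_{h^{\beta\alpha}}=\bigcap_{R>0}S^R_{h^{\beta\alpha}}$. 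The isolation condition says $S_{h^{\beta\alpha}}$ is disjoint from the compact "exit" set $E=\{p\in N^\alpha\,|\,\phi^\alpha(t,p)\in\partial N^\alpha\text{ for some }t\le 0\text{ with }\phi^\alpha([t,0],p)\subset N^\alpha\}\cup\{p\,|\,\phi^\beta(t,h^{\beta\alpha}(p))\in\partial N^\beta\text{ for some }t\ge 0\text{ with }\phi^\beta([0,t],h^{\beta\alpha}(p))\subset N^\beta\}$; I would instead work directly with the finite-$R$ formulation to avoid fussing over whether such exit sets are closed.

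Second, the key reduction: I claim there exists $R_0>0$ and $\delta>0$ such that every $p\in S^{R_0}_{h^{\beta\alpha}}$ satisfies $\phi^\alpha(t,p)\in\Int N^\alpha$ for all $t\in[-R_0,0]$ and $\phi^\beta(t,h^{\beta\alpha}(p))\in\Int N^\beta$ for all $t\in[0,R_0]$, with $\delta$-room to spare (i.e. $\operatorname{dist}$ to the boundaries exceeds $\delta$ on these finite orbit segments). Suppose not; then for each $n$ there is $p_n\in S^{n}_{h^{\beta\alpha}}$ whose backward $\phi^\alpha$-orbit on $[-n,0]$ or whose forward $\phi^\beta$-orbit of $h^{\beta\alpha}(p_n)$ on $[0,n]$ comes within $1/n$ of the respective boundary. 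Passing to a subsequence $p_n\to p_\infty\in N^\alpha$ (compactness), and using continuity of the flows and of $h^{\beta\alpha}$, one gets $p_\infty\in S_{h^{\beta\alpha}}$ (the orbit-containment conditions pass to the limit since $N^\alpha,N^\beta$ are closed) while either $\cO_-(p_\infty)$ touches $\partial N^\alpha$ or $\cO_+(h^{\beta\alpha}(p_\infty))$ touches $\partial N^\beta$ — this is a standard diagonal/limit argument once one is careful that the "bad time" $t_n$ either stays bounded (extract convergent subsequence, contradiction at that time) or goes to infinity (then the limit orbit-segment stays in the closed set but the bad point escapes to a boundary point in the closure, still contradicting $p_\infty\in S_{h^{\beta\alpha}}$ via the $\Int$-condition). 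This contradicts that $h^{\beta\alpha}$ is an isolated map, establishing the claim.

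Third, with $R_0$ and $\delta$ fixed, I would choose the neighborhood $A$. The compact-open topology on $C^\infty(\mR\times M^\alpha,M^\alpha)$ restricted to the compact time-window $[-R_0,R_0]$ and a compact neighborhood of $N^\alpha$ in $M^\alpha$ lets me choose $A$ so that any $(\tilde\phi^\alpha,\tilde h^{\beta\alpha},\tilde\phi^\beta)\in A$ is $C^0$-close to $(\phi^\alpha,h^{\beta\alpha},\phi^\beta)$ uniformly on these compact sets, within $\delta/2$ say. I intersect with $\scrC$ (already open). Now I must check $\tilde h^{\beta\alpha}$ is isolated with respect to $N^\alpha,N^\beta,\tilde\phi^\alpha,\tilde\phi^\beta$. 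Take $p\in S_{\tilde h^{\beta\alpha}}$. Then $\tilde\phi^\alpha(t,p)\in N^\alpha$ for all $t\le 0$; in particular for $t\in[-R_0,0]$. By $C^0$-closeness, $\phi^\alpha(t,p)$ stays within $\delta/2$ of $N^\alpha$ on $[-R_0,0]$, and similarly $\phi^\beta(t,h^{\beta\alpha}(p))$ stays within $\delta/2$ of $N^\beta$ on $[0,R_0]$. But I cannot immediately conclude $p\in S^{R_0}_{h^{\beta\alpha}}$ because $p$ need not lie exactly on the original orbits — here is where I use that $N^\alpha,N^\beta$ are isolating neighborhoods for the perturbed flows too, plus a mild thickening argument: replace $N^\alpha$ by a slightly smaller isolating neighborhood $N^\alpha_-\subset\Int N^\alpha$ that still isolates the same invariant set, chosen so that $\delta$-neighborhoods of orbit segments in $N^\alpha_-$ lie in $\Int N^\alpha$; the point is that the finite-time estimate together with isolation upgrades to the full infinite-time $\Int$-containment. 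Concretely, for $p\in S_{\tilde h^{\beta\alpha}}$ the full backward $\tilde\phi^\alpha$-orbit lies in $N^\alpha$, hence (isolating neighborhood) in an invariant set inside $\Int N^\alpha$ eventually, and the finite piece near $p$ is controlled by the estimate; a clean way is to observe $S_{\tilde h^{\beta\alpha}}$ is itself contained in $\Inv(N^\alpha,\tilde\phi^\alpha)$ in backward time intersected with its $\tilde h^{\beta\alpha}$-preimage condition, and then run the same compactness argument as in step two \emph{uniformly over $A$} — i.e. actually prove the stronger statement that there is $R_0,\delta$ working for \emph{all} triples in a neighborhood at once, by a compactness-of-the-failure-set argument on $\scrC$.

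\textbf{Main obstacle.} The genuinely delicate point is step three: transferring the finite-time interior estimate for $p$ on the \emph{original} flows into the required interior estimate for $p$ on the \emph{perturbed} flows, for \emph{all} $p\in S_{\tilde h^{\beta\alpha}}$ simultaneously. The cleanest route, which I would pursue, is to prove step two in a parametrized form from the start: show that the set $\{(\tilde\phi^\alpha,\tilde h^{\beta\alpha},\tilde\phi^\beta)\in\scrC\ :\ \exists R,\delta\text{ with the finite-time }\delta\text{-interior property}\}$ is open and contains the original triple, by the same diagonal argument applied to sequences of triples in $\scrC$ converging to a limit triple; then openness is immediate and no delicate transfer is needed. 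I expect the limit-extraction when the "bad time" $t_n\to\infty$ to be the subtle bookkeeping step, handled exactly as the Broken Orbit Lemma type arguments are handled elsewhere in the paper, using that $N^\alpha,N^\beta$ are closed and that isolating neighborhoods force the limiting full orbit into the interior.
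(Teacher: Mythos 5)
There is a genuine gap, and it sits in your step two rather than in step three where you expected it. The claimed uniform finite-time interior estimate is false: it is not true in general that there exist $R_0,\delta>0$ such that every $p\in S^{R_0}_{h^{\beta\alpha}}$ has $\phi^\alpha(t,p)$ at distance $>\delta$ from $\partial N^\alpha$ for all $t\in[-R_0,0]$. Take $M^\alpha=M^\beta=\mR$ with the flow of $\dot x=-x$, $N^\alpha=N^\beta=[-1,1]$ and $h^{\beta\alpha}=\id$. This is an isolated map ($S_{h^{\beta\alpha}}=\{0\}$), the forward condition is automatic, and $S^{R}_{h^{\beta\alpha}}=[-e^{-R},e^{-R}]$; but the point $p=e^{-R}$ satisfies $\phi^\alpha(-R,p)=1\in\partial N^\alpha$, so for \emph{every} $R$ there is a point of $S^{R}_{h^{\beta\alpha}}$ whose backward orbit segment reaches the boundary exactly. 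The sets $S^R_{h^{\beta\alpha}}$ shrink to $S_{h^{\beta\alpha}}$, but the points of $S^R_{h^{\beta\alpha}}\setminus S_{h^{\beta\alpha}}$ are precisely those that eventually escape backward, and their finite-time orbit segments may graze $\partial N^\alpha$. Your contradiction argument does not close in the case $t_n\to-\infty$: the limit $q_\infty$ of $\phi^\alpha(t_n,p_n)$ lies on $\partial N^\alpha$ but is \emph{not} on the orbit of $p_\infty$ (in the example above $q_\infty=1$ while $p_\infty=0$), and a boundary point whose forward orbit stays in $N^\alpha$ is perfectly compatible with $N^\alpha$ being isolating and with $h^{\beta\alpha}$ being isolated, so no contradiction is reached. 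Since your step three and your proposed "parametrized" fallback both rest on this estimate, the whole strategy fails as stated.

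The paper's proof proceeds differently and avoids this trap: it reformulates isolation as a dichotomy for points $p$ of the compact boundaries $\partial N^\alpha$ and $\partial N^\beta$ — either $p$ escapes $N^\alpha$ in finite backward time (an open, finite-time condition), or its entire backward orbit lies in $N^\alpha$, in which case the forward escape time $T$ out of $N^\alpha$ is finite and every point of $\phi^\alpha([0,T],p)$ has image escaping $N^\beta$ in finite forward time (again finite-time, open conditions into the open set $M\setminus N$); a symmetric dichotomy is imposed on $\partial N^\beta$. Each alternative is stable under compact-open perturbations of the triple $(\phi^\alpha,h^{\beta\alpha},\phi^\beta)$ on an open neighborhood $U_p$ of $p$, and compactness of the boundaries yields a finite subcover and hence the open set $A$. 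If you want to salvage a quantitative approach, the uniformity has to be extracted from these boundary escape conditions (which are strict inclusions into open sets over compact time windows), not from an interior estimate on the truncated sets $S^R_{h^{\beta\alpha}}$.
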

\begin{proof}
Define the sets
\begin{align*}
S^\alpha_-&:=\{p\in N^\alpha\,|\, \cO_-(p)\subset N^\alpha\}=\bigcap_{T\leq 0} \phi^\alpha([T,0],N^\alpha),\\
S^\alpha_+&:=\{p\in N^\alpha\,|\, \cO_+(p)\subset N^\alpha\}=\bigcap_{T\geq 0} \phi^\alpha([0,T],N^\alpha),
\end{align*} 
which are compact. Then $S_{h^{\beta\alpha}}=S^\alpha_-\bigcap (h^{\beta\alpha})^{-1}(S^\beta_+)$. Note that a map being isolated is equivalent to the following two properties of points on the boundary of the isolating neighborhoods in the domain and the codomain.
\begin{enumerate}
\item[(D)] For all $p\in \partial N^\alpha$ either
\begin{enumerate}
\item[(ai)] There exists a $t<0$ such that $\phi^\alpha(t,p)\in M^\alpha\setminus N^\alpha$.
\item[(aii)] $\cO_-(p)\subset N^\alpha$. Then 
$$
T=\sup\{t\in \mR_{\geq 0}\,|\,\phi^\alpha([0,t],p)\subset N^\alpha\}
$$
is finite\footnote{Note that it cannot be the case that $\cO_+(p)$ is also contained in $N^\alpha$ since $N^\alpha$ is an isolating neighborhood of the flow $\phi^\alpha$ and the orbit through a boundary point must leave the isolating neighborhood at some time.}, and for all $q\in \phi^{\alpha}([0,T],p)$ there exists $s\geq 0$ such that $\phi^\beta(s,h^{\beta\alpha}(q))\in M^\beta\setminus N^\beta$.
\end{enumerate}
\item[(CD)] For all $p\in \partial N^\beta$ either
\begin{enumerate}
\item[(bi)] There exists a $t>0$ such that $\phi^\beta(t,p)\in M^\beta\setminus N^\beta.$
\item[(bii)] $\cO_+(p)\subset N^\beta$, and 
$$
T=\inf\{t\in \mR_{\leq 0}\,|\, \phi^{\beta}([t,0],p)\subset N^\beta\}
$$
is finite and $h^{\beta\alpha}(S_-^\alpha)\cap \phi^\beta([T,0],p)=\emptyset$.
\end{enumerate}
\end{enumerate}

In each of these cases we construct open sets $U^{\alpha/\beta}_p\subset M^{\alpha/\beta}$ and $A^{\alpha/\beta}_p\subset \scrC$ such that, for all $q\in U_p$ and $(\tilde \phi^\alpha,\tilde h^{\beta\alpha},\tilde \phi^\beta)\in A^{\alpha/\beta}_p$, these properties remain true for points on the boundary of the isolating neighborhoods. By compactness of $N^\alpha$ and $N^\beta$ we can choose finite number of $U^{\alpha/\beta}_p$ that still cover $\partial N^{\alpha/\beta}$. Then 
$$A=\left(\bigcap_jA^\alpha_{p_j}\right)\bigcap \left(\bigcap_jA^\beta_{p_j}\right)$$ 
is the required open set. 

Let $p\in \partial N^\alpha$ and assume that Property (ai) holds. Then there exists a $t<0$ such that $\phi^\alpha(t,p)\in M^\alpha\setminus N^\alpha$. By continuity there exists an open neighborhood $U_p\ni p$ such that $\phi^\alpha(t,\overline {U_p})\subset M^\alpha\setminus N^\alpha$. Define
$$
A^\alpha_p=\{(\tilde\phi^\alpha,\tilde h^{\beta\alpha},\tilde \phi^\beta)\in \scrC\,|\, \tilde \phi^\alpha(t,\overline{U_p})\subset M^\alpha\setminus N^\alpha\}. 
$$
This is by definition open in the compact-open topology. Then for all $q\in U_p$ and all $(\tilde\phi^\alpha,\tilde h^{\beta\alpha},\tilde \phi^\beta)\in A_p^\alpha$ there exists a $t<0$ such that $\tilde \phi^{\alpha}(t,q)\in M^\alpha\setminus N^\alpha$.

Now let $p\in \partial N^\alpha$ and assume that Property (aii) holds. Choose $t\in [0,T]$ and $s>0$ such that $\phi^\beta(s,h^{\beta\alpha}(\phi^\alpha(t,p)))\in M^\beta\setminus N^\beta$. By continuity of $\phi^{\beta}$ there exists a neighborhood $V^\beta_{t,p}\ni h^{\beta\alpha}(\phi^\alpha(t,p))$ such that $\phi^\beta(s,\overline{V^\beta_{t,p}})\subset M^\beta\setminus N^\beta$. Again by continuity there exists a neighborhood $V_{t,p}^\alpha\ni\phi^\alpha(t,p)$ such that $h^{\beta\alpha}(\overline{V^\alpha_{t,p}})\subset V^\beta_{t,p}$. Since $\phi^\alpha([0,T],p)$ is compact and covered by the $V^\alpha_{t,p}$, we can choose a finite number of $t_j$ such that the sets $V_{t_j,p}^\alpha$ cover $\phi^{\alpha}([0,T],p)$. Now there exists an $\epsilon>0$ small and a neighborhood $V_p\ni p$ such that $\phi^{\alpha}(T+\epsilon,\overline{V_p^\alpha})\in (M^\alpha\setminus N^\alpha)\bigcap (\bigcup_j V^\alpha_{t_j,p})$ and $\phi^\alpha([0,T+\epsilon],\overline{V_p^\alpha})\subset \bigcup_j V^\alpha_{t_j,p}$. Set
\begin{align*}
U_p^\alpha&=\left(\bigcap_j \phi^{\alpha}(-t_j,V^\alpha_{t_j,p})\right)\bigcap V_p,\\
A_p^\alpha&=\{(\tilde\phi^\alpha,\tilde h^{\beta\alpha},\tilde \phi^\beta)\in \scrC\,|\, \tilde \phi^\alpha(t+\epsilon,\overline{ V_p^\alpha})\subset (M^\alpha\setminus N^\alpha)\bigcap \left(\bigcup_j V^\alpha_{t_j,p}\right),\\
&\quad\tilde\phi^\alpha([0,T+\epsilon],\overline{V_p^\alpha})\subset \bigcup_j V^\alpha_{t_j,p},\quad \tilde h^{\beta\alpha}(\overline{V_{t_j,p}^\alpha})\subset V_{t_j,p}^\beta, \quad\tilde \phi^{\beta}(s_j,\overline{ V_{t_j,p}^\beta})\subset M^\beta\setminus N^\beta\}.
\end{align*}
Note that, for all $q\in U_p^\alpha$ and $(\tilde\phi^\alpha,\tilde h^{\beta\alpha},\tilde \phi^\beta)\in A_p^\alpha$ and all $t\geq 0$ such that $\tilde \phi^\alpha([0,t],q)\in N^\alpha$, there exists an $s\geq 0$ such that $\tilde \phi^\beta(s,\tilde h^{\beta\alpha}(t,q))\in M^\beta\setminus N^\beta$. It is not necessarily true that $\cO_-(q)\subset N^\alpha$, but this does not matter. Now, by compactness there exists a finite number of $p_i\in \partial N^\alpha$ such that the corresponding $U_{p_i}$ cover $\partial N^\alpha$. Then $A^\alpha=\bigcap_i A^\alpha_{p_i}$ is open, and for all $p\in \partial N^\alpha$ either Property (ai) or (aii) holds with respect to each $(\tilde\phi^\alpha,\tilde h^{\beta\alpha},\tilde \phi^\beta)\in A^\alpha$.

Now we study the codomain. Let $p\in \partial N^\beta$ and assume that Property (bi) holds. Then completely analogously to the situtation of Property (ai) there exists a $t>0$ and an open $U^\beta_p$ such that $\phi^{\beta}(t,\overline{U_p^\beta})\subset M^\beta\setminus N^\beta$. Define
$$
A_p^\beta=\{(\tilde\phi^\alpha,\tilde h^{\beta\alpha},\tilde \phi^\beta)\in \scrC\,|\, \tilde\phi^{\beta}(t,\overline{U_p^\beta})\subset M^\beta\setminus N^\beta\}.
$$
Then for all $q\in U^\beta_p$ and all $(\tilde\phi^\alpha,\tilde h^{\beta\alpha},\tilde \phi^\beta)\in A^\beta_p$ there exists a $t>0$ such that $\tilde \phi^\beta(t,q)\in N^\beta$. 

Finally let $p\in \partial N^\beta$ and assume that Property (bii) holds. Thus $\cO_+(p)\subset N^\beta$ and $T=\inf\{t\in \mR_{\leq 0}\,|\,\phi^\beta([T,0],p)\subset N^\beta\}>-\infty$ and $h^{\beta\alpha}(S_-)\cap \phi^{\beta}([T,0],p)=\emptyset$. Since both sets are compact, there exists an open $V\supset \phi^\beta([T,0],p)$ such that $h^{\beta\alpha}(S_-)\cap \overline{V}=\emptyset$. This implies that for all $q\in \bigl(h^{\beta\alpha}\bigr)^{-1}(\overline V)\cap N^\alpha$ there exists an $s<0$ such that $\phi^{\alpha}(s,q)\in M^\alpha\setminus N^\alpha$. But then there exists an open $V^\alpha_q\ni q$ such that $\phi^\alpha(s,\overline{V^\alpha_q})\subset M^\alpha\setminus N^\alpha$. The set $\bigl(h^{\beta\alpha}\bigr)^{-1}(\overline V)\cap N^\alpha$ is compact hence it is covered by $V^\alpha_{q_i}$ for a finite number of $q_i$. Note that, for all flows $\tilde \phi^\alpha$ with $\tilde \phi^\alpha(s_i,\overline{V^\alpha_{q_i}})\subset M^\alpha\setminus N^\alpha$, we have that $\tilde S_-^\alpha=\{p\in N^\alpha\,|\,\tilde\phi^\alpha(t,p)\in N^\alpha,\,\forall t\leq 0\}$ is contained in $N^\alpha \setminus \bigcup_i\overline V_{q_i}^\alpha$. Let us return to the codomain. By continuity of the flow and the choice of $T$ there exists an $\epsilon>0$ such that $\phi^\beta(T-\epsilon,p)\subset (M^\alpha \setminus N^\alpha)\bigcap V$ with $\phi^\beta([T-\epsilon,0],p)\subset V$. Fix a neighborhood $V_p^\beta\ni p$ such that $\phi^\beta(T-\epsilon,\overline{V_p^\beta})\subset (M^\beta\setminus N^\beta)\bigcap V)$. Now for $t\in [T,0]$ the sets $\phi^\beta(t,V_p^\beta)\cap V$ cover $\phi^\beta([T,0],p)$. Using compactness, choose a finite number of times $t_j$ such that $\phi^\beta(t_j,V_p^\beta)\cap V$. For all $q\in \bigcap_j\phi(-t_j,\phi^\beta(t_j,V_p^\beta)\cap V)$ we have that $T_q=\inf\{t\in \mR_{\leq 0}\,|\,\phi^{\beta}([t,0],q)\}\geq T-\epsilon$, and for all $t\in T_q$, we have $\phi^\beta(t,q)\subset V$. Define
\begin{align*}
A^\beta_p&=\{(\tilde \phi^\alpha,\tilde h^{\beta\alpha},\phi^\beta)\,|\, \tilde \phi(s_i,\overline{ V_{q_i}^\alpha})\subset M^\alpha\setminus N^\alpha\\
&\quad \tilde h^{\beta\alpha}(N^\alpha\setminus \bigcup V_{q_i}^\alpha)\subset M^\beta\setminus \overline{V},\quad \tilde \phi^\beta([T-\epsilon]\times \overline {U^\beta_p})\subset V\}.
\end{align*}
Then for all $q\in U_p^\beta$ and all $(\tilde \phi^\alpha,\tilde h^{\beta\alpha},\tilde\phi^\beta)\in A_p^\beta$
$$
\tilde h^{\beta\alpha}(\tilde S^\alpha_-)\cap \tilde \phi^{\beta}([T_q,0],q)=\emptyset.
$$
The sets $U^\beta_p$ cover $\partial N^\beta$. Hence we can choose a finite number of $p_j$ such that for all $p\in \partial N^\beta$ and all $(\tilde \phi^\alpha,\tilde h^{\beta\alpha},\phi^\beta)\in A^\beta=\bigcap_j A^\beta_{p_j}$ either Property (bi) or (bii) holds. 

Set $A=A^\alpha\cap A^\beta$. Then, by construction of the set $A$, for all $(\tilde \phi^\alpha,\tilde h^{\beta\alpha},\tilde\phi^\beta)\in A$ and all $p\in N^\alpha$ Properties (ai) and (aii) hold. And similarly for all $p\in \partial N^\beta$, Properties (bi) and (bii) hold. Thus $\tilde h^{\beta\alpha}$ is isolated with respect to the flows $\tilde \phi^\alpha,\tilde \phi^\beta$, and isolating neighborhoods $N^\alpha,N^\beta$.
\end{proof}

\begin{definition}
A homotopy $h_\lambda$ is \emph{isolated} with respect to isolated homotopies of flows $\phi_\lambda^\alpha$ and $\phi^{\beta}_\lambda$ if each $h_\lambda$ is an isolated map with respect to $\phi^\alpha_\lambda$ and $\phi^\beta_\lambda$. The maps $h_0$ and $h_1$ are said to be isolated homotopic to each other.
\end{definition}

We remark that isolated homotopies are also open in the compact open topology. An isolated homotopy $h_\lambda$ can be viewed as an isolated map $H:M^\alpha\times[0,1] \rightarrow M^\beta\times[0,1]$, with $H(x,\lambda)=(h_\lambda(x),\lambda)$ with flows $\Phi^\alpha=(\phi_\lambda^\alpha,\lambda)$. Thus the set of isolated homotopies is open in the compact open topology by Proposition~\ref{prop:isolatedmapsareopen}. 
Flow maps, cf.~Definition~\ref{defi:flowmap}, are isolated with respect to well chosen isolating neighborhoods. 

\begin{proposition}
Let $h^{\beta\alpha}:M^\alpha\rightarrow M^\beta$ be a flow map. Then for each isolating neighborhood $N^\beta$ of $\phi^\beta$, $N^\alpha=(h^{\beta\alpha})^{-1}(N^\beta)$ is an isolating neighborhood, and $S^{\alpha}=\Inv (N^{\alpha},\phi^\alpha)=(h^{\beta\alpha})^{-1}(\Inv( N^\beta,\phi^\beta))=(h^{\beta\alpha})^{-1}(S^\beta)$. Moreover $h^{\beta\alpha}$ is isolated w.r.t.~$N^\alpha$ and $N^\beta$. If $h^{\gamma\beta}$ is another flow map, and $N^\gamma$ is an isolating neighborhood then $h^{\gamma\beta}\circ \phi^\beta_R\circ h^{\beta\alpha}$ is isolated for all $R$ with respect to  $N^\alpha=(h^{\gamma\beta}\circ h^{\beta\alpha})^{-1}(N^\gamma)$.
\label{prop:flowmaps}
\end{proposition}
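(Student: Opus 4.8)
### Proof proposal for Proposition~\ref{prop:flowmaps}

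The plan is to treat the three assertions in order, each following from the properness and equivariance of flow maps together with the fact that $N^\beta$ (resp.\ $N^\gamma$) is an isolating neighborhood.

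\textbf{Step 1: $N^\alpha=(h^{\beta\alpha})^{-1}(N^\beta)$ is an isolating neighborhood and $S^\alpha=(h^{\beta\alpha})^{-1}(S^\beta)$.} First I would check that $N^\alpha$ is compact: it is closed since $N^\beta$ is closed and $h^{\beta\alpha}$ continuous, and it is contained in the compact set $(h^{\beta\alpha})^{-1}(N^\beta)$ — wait, that is circular; instead, since $N^\beta$ is compact and $h^{\beta\alpha}$ is proper, $(h^{\beta\alpha})^{-1}(N^\beta)$ is compact. Next, equivariance gives $h^{\beta\alpha}(\phi^\alpha(t,p))=\phi^\beta(t,h^{\beta\alpha}(p))$, so $\phi^\alpha(t,p)\in N^\alpha$ for all $t\in\mR$ if and only if $\phi^\beta(t,h^{\beta\alpha}(p))\in N^\beta$ for all $t\in\mR$; hence $p\in\Inv(N^\alpha,\phi^\alpha)$ iff $h^{\beta\alpha}(p)\in\Inv(N^\beta,\phi^\beta)=S^\beta$, which is exactly $S^\alpha=(h^{\beta\alpha})^{-1}(S^\beta)$. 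Finally, to see $N^\alpha$ is isolating I must show $S^\alpha\subset\Int N^\alpha$. Since $S^\beta\subset\Int N^\beta$ and $h^{\beta\alpha}$ is continuous, $(h^{\beta\alpha})^{-1}(\Int N^\beta)$ is open and contains $S^\alpha$; and $(h^{\beta\alpha})^{-1}(\Int N^\beta)\subset(h^{\beta\alpha})^{-1}(N^\beta)=N^\alpha$, so this open set is contained in $\Int N^\alpha$, giving $S^\alpha\subset\Int N^\alpha$.

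\textbf{Step 2: $h^{\beta\alpha}$ is isolated with respect to $N^\alpha,N^\beta$.} I unwind Definition~\ref{defi:isolatedmap}. Let $p\in S_{h^{\beta\alpha}}$, i.e.\ $p\in N^\alpha$ with $\cO_-(p)\subset N^\alpha$ and $\cO_+(h^{\beta\alpha}(p))\subset N^\beta$. By equivariance, $h^{\beta\alpha}(\cO_-(p))=\cO_-(h^{\beta\alpha}(p))$, so the first condition gives $\cO_-(h^{\beta\alpha}(p))\subset N^\beta$; combined with $\cO_+(h^{\beta\alpha}(p))\subset N^\beta$ this means the full orbit of $h^{\beta\alpha}(p)$ stays in $N^\beta$, hence $h^{\beta\alpha}(p)\in S^\beta\subset\Int N^\beta$. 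Then $\cO_+(h^{\beta\alpha}(p))=\cO_+$ of a point in $S^\beta$, and since $S^\beta$ is invariant and contained in $\Int N^\beta$, $\cO_+(h^{\beta\alpha}(p))\subset S^\beta\subset\Int N^\beta$. For the domain side: $p$ and $h^{\beta\alpha}(p)$ have their forward and backward orbits as above, so $p$ lies on an orbit mapping into $S^\beta$; since $h^{\beta\alpha}(p)\in S^\beta$ and $S^\beta$ is invariant, the whole orbit of $h^{\beta\alpha}(p)$ lies in $S^\beta$, whence $h^{\beta\alpha}(\phi^\alpha(t,p))\in S^\beta$ for all $t$, i.e.\ $\phi^\alpha(t,p)\in(h^{\beta\alpha})^{-1}(S^\beta)=S^\alpha\subset\Int N^\alpha$ for all $t\in\mR$. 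In particular $\cO_-(p)\subset\Int N^\alpha$, as required.

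\textbf{Step 3: $h^{\gamma\beta}\circ\phi^\beta_R\circ h^{\beta\alpha}$ is isolated for all $R$.} Write $g=h^{\gamma\beta}\circ h^{\beta\alpha}\colon M^\alpha\to M^\gamma$; it is a flow map (composition of proper equivariant maps is proper equivariant), so by Steps 1--2 applied to $g$ and $N^\gamma$, the set $N^\alpha=g^{-1}(N^\gamma)$ is isolating and $g$ is isolated. It remains to pass from $g$ to $h^{\gamma\beta}\circ\phi^\beta_R\circ h^{\beta\alpha}$. The key observation is that, by equivariance of $h^{\gamma\beta}$, $h^{\gamma\beta}\circ\phi^\beta_R\circ h^{\beta\alpha}=\phi^\gamma_R\circ h^{\gamma\beta}\circ h^{\beta\alpha}=\phi^\gamma_R\circ g$; composing with the time-$R$ map of the flow does not change orbits, so $\cO_\pm\big(\phi^\gamma_R(g(p))\big)=\cO_\pm(g(p))$ as subsets of $M^\gamma$. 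Hence the set $S_{\phi^\gamma_R\circ g}$ of Definition~\ref{defi:isolatedmap} equals $S_g$, and the isolation condition for $\phi^\gamma_R\circ g$ is literally the isolation condition for $g$, already established. This gives the claim for every $R$, and is uniform in $R$ because $\phi^\gamma_R$ affects only the parametrization of the codomain orbits.

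The routine part is the point-set topology in Step 1 (properness and continuity); the conceptual heart is the repeated use of equivariance to transport orbits through the maps in Steps 2 and 3. I do not expect a serious obstacle — the only thing to be careful about is matching the somewhat asymmetric conditions (D) and (CD)-style phrasing of Definition~\ref{defi:isolatedmap}, but once one observes that for flow maps every relevant point actually lies in the invariant set $S^\beta$ (resp.\ $S^\gamma$), which sits strictly inside the isolating neighborhood, all the required strict inclusions $\cO_-(p)\subset\Int N^\alpha$ and $\cO_+(h^{\beta\alpha}(p))\subset\Int N^\beta$ follow at once.
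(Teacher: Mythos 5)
Your Steps 1 and 2 are correct and coincide with the paper's own argument (which follows McCord): properness gives compactness of $N^\alpha$, equivariance identifies $\Inv(N^\alpha,\phi^\alpha)$ with $(h^{\beta\alpha})^{-1}(S^\beta)$, and for $p\in S_{h^{\beta\alpha}}$ equivariance forces the \emph{full} orbit of $p$ into $N^\alpha$, hence $p\in S^\alpha\subset\Int N^\alpha$ and $h^{\beta\alpha}(p)\in S^\beta\subset \Int N^\beta$. Whether you push $\cO_-(p)$ forward through $h^{\beta\alpha}$ or pull $\cO_+(h^{\beta\alpha}(p))$ back is immaterial.

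Step 3, however, rests on a false identity. You assert $\cO_\pm\bigl(\phi^\gamma_R(g(p))\bigr)=\cO_\pm(g(p))$ "as subsets of $M^\gamma$"; the two points do lie on the same orbit, but the \emph{half}-orbits are shifted: $\cO_+\bigl(\phi^\gamma_R(g(p))\bigr)=\{\phi^\gamma(s,g(p))\mid s\geq R\}$, which for $R>0$ is a proper subset of $\cO_+(g(p))$. Hence $S_{\phi^\gamma_R\circ g}$ need not equal $S_g$: a point $p\in S_{\phi^\gamma_R\circ g}$ only yields $\phi^\gamma(u,g(p))\in N^\gamma$ for $u\leq 0$ (from $\cO_-(p)\subset N^\alpha=g^{-1}(N^\gamma)$) and for $u\geq R$, and nothing controls the orbit of $g(p)$ on the time interval $(0,R)$ — it could leave $N^\gamma$ there and return — so you cannot conclude $g(p)\in S^\gamma$, nor $p\in S^\alpha$. (For $R\leq 0$ the two half-lines cover $\mR$ and your reduction to Step 2 does go through.) The definition of isolated map is asymmetric in exactly this way, and this is the substance of the statement: Figure~\ref{fig:counterexample} shows that inserting $\psi^\beta_R$ is precisely where isolation can fail. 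Note that the paper only says this part "follows along the same lines", and that where the claim is actually used with $R>0$, in the proof of Theorem~\ref{thm:morseconleyfloerfunctoriality}, a genuinely different argument appears: the monotone function $b_{p,R}$ built from Lyapunov functions is used to control the orbit of $h^{\beta\alpha}(p)$ on $[0,R]$ and force $S_{h^{\gamma\beta}\circ\psi^\beta_R\circ h^{\beta\alpha}}\subset S^\alpha$. Some such additional input is needed to close the gap $(0,R)$; the claim that "isolation of $\phi^\gamma_R\circ g$ is literally isolation of $g$" is not a proof.
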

\begin{proof}
We follow McCord \cite{mccord}. Since $h^{\beta\alpha}$ is proper $N^\alpha$ is compact. If $p\in \Inv(N^{\alpha})$ then $\phi^{\alpha}(t,p)\in N^\alpha$ for all $t\in \mR$. By equivariance $\phi^{\beta}(t,h^{\beta\alpha}(p))\in N^\beta$ for all $t$ and since $N^\beta$ is an isolating neighborhood $h^{\beta\alpha}(p)\in \Int(N^\beta)$. Thus $p\in (h^{\beta\alpha})^{-1}(\Int N^\beta)\subset \Int N^\alpha$. If $p\in S^\alpha$, then for all $t$ we have that $\phi^\alpha(t,p)\in N^\alpha$, and thus $h^{\beta\alpha}(\phi^\alpha(t,p))\in N^\beta$. By equivariance it follows that $\phi^{\beta}(t,h^{\beta\alpha}(p))\in N^\beta$ for all $t$. Hence $h^{\beta\alpha}(p)\in S^\beta$. Analogously, if $p\in (h^{\beta\alpha}(p))^{-1}(S^\beta)$ then $\phi^\beta(t,h^{\beta\alpha})\in S^\beta$ for all $t$. By equivariance it follows that $h^{\beta\alpha}(\phi^{\alpha}(t,p))\in S^\beta$ and this implies that $\phi(t,p)\in (h^{\beta\alpha})^{-1}(N^\beta)$ for all $t$. We finally show that $h^{\beta\alpha}$ is isolated. If $p\in S_{h^{\beta\alpha}}$, then $\phi^{\alpha}(t,p)\in N^\alpha$ for all $t<0$, and $\phi^{\beta}(t,h^{\beta\alpha}(p))\in N^\beta$ for all $t>0$. By equivariance $h^{\beta\alpha}(\phi^{\alpha}(t,p))\in N^\beta$ for all $t>0$, and thus $\phi^{\alpha}(t,p)\in N^\alpha$ for all $t$. Thus $p\in S^\alpha$ and $\cO-(p)\subset S^\alpha\subset \Int N^\alpha$. Similarly $h^{\beta\alpha}(p)\in S^\beta$ thus $\cO_+(h^{\beta\alpha}(p))\subset S^\beta \subset \Int N^\beta$.  
The proof of the latter statement follows along the same lines. 
\end{proof}

The previous proposition states that we can pull back isolated invariant sets and neighborhoods along flow maps. The same is true for Lyapunov functions.

\begin{proposition}
Let $h^{\beta\alpha}:M^\alpha\rightarrow M^\beta$ be a flow map with respect to $\phi^\alpha$ and $\phi^\beta$. Let $S^\beta\subset M^\beta$ be an isolated invariant set, and $f^\beta:M^\beta\rightarrow \mR$ a Lyapunov function satisfying the Lyapunov property with respect to an isolating neighborhood $N^\beta$. Then $f^\alpha:=f^\beta\circ h^{\beta\alpha}$ is a Lyapunov function for $S^\alpha:=(h^{\beta\alpha})^{-1}(S^\beta)$ satisfying the Lyapunov property on $N^\alpha=(h^{\beta\alpha})^{-1}(N^\beta)$. Moreover, for any metric $e^\alpha,e^\beta$ the map $h^{\beta\alpha}$ is isolated with respect to the gradient flows $\psi^\alpha$ and $\psi^\beta$ of $(f^\alpha,e^\alpha)$ and $(f^\beta,e^\beta)$.  
\label{prop:lyapunovpullback}
\end{proposition}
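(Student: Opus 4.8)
The plan is to prove the two assertions separately; the decisive point for the harder ``moreover'' part will be that the composition identity $f^\alpha=f^\beta\circ h^{\beta\alpha}$ pins the relevant orbits to a single critical level, which is what substitutes for the equivariance one has for flow maps. Write $c^\ast$ for the constant value of $f^\beta$ on $S^\beta$, let $X_{\phi^\alpha}$ denote the generating vector field of $\phi^\alpha$, and let $\psi^\alpha,\psi^\beta$ be the negative gradient flows of $(f^\alpha,e^\alpha)$ and $(f^\beta,e^\beta)$. For the first assertion I would argue as follows. By Proposition~\ref{prop:flowmaps}, $N^\alpha=(h^{\beta\alpha})^{-1}(N^\beta)$ is an isolating neighborhood of $\phi^\alpha$ and $S^\alpha=\Inv(N^\alpha,\phi^\alpha)=(h^{\beta\alpha})^{-1}(S^\beta)$, so $S^\alpha$ is an isolated invariant set with isolating neighborhood $N^\alpha$. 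Since $h^{\beta\alpha}(S^\alpha)\subset S^\beta$, the function $f^\alpha=f^\beta\circ h^{\beta\alpha}$ is constant, equal to $c^\ast$, on $S^\alpha$. If $p\in N^\alpha\setminus S^\alpha$ then $h^{\beta\alpha}(p)\in N^\beta\setminus S^\beta$, because $N^\alpha,S^\alpha$ are the $h^{\beta\alpha}$-preimages of $N^\beta,S^\beta$; moreover $f^\alpha(\phi^\alpha(t,p))=f^\beta(h^{\beta\alpha}(\phi^\alpha(t,p)))=f^\beta(\phi^\beta(t,h^{\beta\alpha}(p)))$ by equivariance, so differentiating at $t=0$ gives $\frac{d}{dt}\bigr|_{t=0}f^\alpha(\phi^\alpha(t,p))<0$ by the Lyapunov property of $f^\beta$ at $h^{\beta\alpha}(p)\in N^\beta\setminus S^\beta$. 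This is exactly the assertion that $f^\alpha$ is a Lyapunov function for $(S^\alpha,\phi^\alpha)$ on $N^\alpha$. I would record the consequence, used repeatedly below, that a critical point of $f^\alpha$ lying in $N^\alpha$ necessarily lies in $S^\alpha$ (otherwise the strict inequality $\langle\nabla_{e^\alpha}f^\alpha,X_{\phi^\alpha}\rangle<0$ on $N^\alpha\setminus S^\alpha$ fails there), with the analogous statement for $f^\beta,N^\beta,S^\beta$.

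Next I would check that $N^\alpha,N^\beta$ are isolating neighborhoods of $\psi^\alpha,\psi^\beta$ — needed for Definition~\ref{defi:isolatedmap} to apply, and the fact, already used in Section~\ref{sec:morselocalmorseintro} and in~\cite{rotvandervorst}, that makes $(f^\alpha,e^\alpha,N^\alpha)$ an admissible datum for local Morse homology. Directly: if $r$ has its full $\psi^\beta$-orbit in the compact set $N^\beta$, then $f^\beta$ is non-increasing along it, so the $\alpha$- and $\omega$-limit sets $\alpha^{\psi^\beta}(r),\omega^{\psi^\beta}(r)$ are nonempty $\psi^\beta$-invariant subsets of $N^\beta$ on which $f^\beta$ is constant, hence consist of critical points of $f^\beta$ in $N^\beta$, which by the previous paragraph lie in $S^\beta$ where $f^\beta=c^\ast$. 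Monotonicity then squeezes $f^\beta\equiv c^\ast$ along the whole orbit, so $\nabla_{e^\beta}f^\beta$ vanishes on it, $r$ is critical, and $r\in S^\beta\subset\Int N^\beta$. Thus $\Inv(N^\beta,\psi^\beta)\subset\Int N^\beta$; the same argument gives $\Inv(N^\alpha,\psi^\alpha)\subset\Int N^\alpha$.

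For the ``moreover'' part itself, take $p$ in the set $S_{h^{\beta\alpha}}$ of Definition~\ref{defi:isolatedmap} formed with $\psi^\alpha,\psi^\beta$ in place of $\phi^\alpha,\phi^\beta$: thus $p\in N^\alpha$, the backward $\psi^\alpha$-orbit of $p$ lies in $N^\alpha$, and the forward $\psi^\beta$-orbit of $q:=h^{\beta\alpha}(p)$ lies in $N^\beta$. The limit-set argument of the previous paragraph, applied to the backward $\psi^\alpha$-orbit of $p$ (contained in the compact set $N^\alpha$), yields $\alpha^{\psi^\alpha}(p)\subset S^\alpha$ with $f^\alpha\equiv c^\ast$ there, so $f^\alpha(p)\le c^\ast$ by monotonicity of $f^\alpha$ along $\psi^\alpha$; dually $\omega^{\psi^\beta}(q)\subset S^\beta$ and $f^\beta(q)\ge c^\ast$. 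The decisive point is that $f^\alpha(p)=f^\beta(h^{\beta\alpha}(p))=f^\beta(q)$, which forces $f^\alpha(p)=f^\beta(q)=c^\ast$. Feeding this back, $f^\alpha$ is constant equal to $c^\ast$ along the entire backward $\psi^\alpha$-orbit of $p$ (it is monotone, with value $c^\ast$ at $t=0$ and in the limit $t\to-\infty$), so $\nabla_{e^\alpha}f^\alpha$ vanishes along it; in particular $p$ is a critical point of $f^\alpha$ in $N^\alpha$, hence $\psi^\alpha(t,p)=p$ for all $t$ and $p\in S^\alpha\subset\Int N^\alpha$, so the backward $\psi^\alpha$-orbit of $p$, which is $\{p\}$, lies in $\Int N^\alpha$. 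The symmetric argument for $q$ with $f^\beta,\psi^\beta$ shows $q$ is critical and its forward $\psi^\beta$-orbit is $\{q\}\subset S^\beta\subset\Int N^\beta$. This verifies Definition~\ref{defi:isolatedmap}, so $h^{\beta\alpha}$ is isolated with respect to $\psi^\alpha,\psi^\beta$ for any choice of metrics $e^\alpha,e^\beta$.

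The step I expect to look like the main obstacle is controlling the \emph{entire} backward and forward orbits rather than only their limit sets: for the flows $\phi^\alpha,\phi^\beta$ this is precisely the equivariance step in the proof of Proposition~\ref{prop:flowmaps}, but $h^{\beta\alpha}$ is not equivariant for the gradient flows and that argument does not carry over verbatim. The identity $f^\alpha=f^\beta\circ h^{\beta\alpha}$ is what replaces equivariance: it couples domain and codomain at the common critical value $c^\ast$ and thereby collapses $S_{h^{\beta\alpha}}$ (for the gradient flows) onto critical points of $f^\alpha$ inside $S^\alpha$. The remaining ingredients — that limit sets of orbits trapped in a compact set are nonempty, compact and invariant, and that a function non-increasing along a gradient flow is constant on such a limit set — are standard and I would not dwell on them.
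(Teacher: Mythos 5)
Your proof is correct and follows essentially the same route as the paper: the first part is verbatim the paper's equivariance computation, and your coupling of the backward $\psi^\alpha$-orbit of $p$ with the forward $\psi^\beta$-orbit of $h^{\beta\alpha}(p)$ via $f^\alpha(p)=f^\beta(h^{\beta\alpha}(p))$ is exactly the paper's monotone function $b_p:\mR\to\mR$ (continuous at $t=0$ for precisely that reason), with the same conclusion that $b_p$ is squeezed to the constant $c$ and hence $p\in S^\alpha$. The only difference is that you prove directly that $N^\alpha,N^\beta$ isolate the gradient flows and that critical points in $N^\alpha$ lie in $S^\alpha$, where the paper cites Lemmas 3.1 and 3.3 of~\cite{rotvandervorst}.
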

\begin{proof}
Since $h^{\beta\alpha}(S^\alpha)\subset S^\beta$ and $f^\beta\bigr|_{S^\beta}\equiv c$ is constant it follows that $f^\alpha\bigr|_{S^\alpha}\equiv c$ is constant. If $p\in N^\alpha\setminus S^\alpha$ then there exists a $t\in \mR$ such that $\phi^\alpha(t,p)\subset M^\alpha\setminus N^\alpha$. Then $h^{\beta\alpha}(\phi^{\alpha}(t,p))\in M^\beta\setminus N^\beta$ and by equivariance $\phi^\beta(t,h^{\beta\alpha}(p))\in M^\beta\setminus N^\beta$. Therefore $h^{\beta\alpha}(p)\in N^\beta\setminus S^\beta$. Again by equivariance
$$
\frac{d}{dt}\Bigr|_{t=0}f^\alpha(\phi^\alpha(t,p))=\frac{d}{dt}\Bigr|_{t=0}f^{\beta}(h^{\beta\alpha}(\phi^\alpha(t,p)))=\frac{d}{dt}\Bigr|_{t=0}f^{\beta}(\phi^\beta(t,h^{\beta\alpha}(p)))<0.
$$
Thus $f^\alpha$ is a Lyapunov function. We now prove that $h^{\beta\alpha}$ is an isolated map with respect to the isolating neighborhoods $N^\alpha$ and $N^\beta$ for the gradient flows $\psi^\alpha$ and $\psi^\beta$. The neighborhoods are isolating for the gradient flows by~\cite[Lemma 3.3]{rotvandervorst}. Let $P_{h^{\beta\alpha}}$ be the set of Equation~\bref{eq:isolatedmapset} for the gradient flows. By the arguments of \cite[Lemma 3.3]{rotvandervorst} we have that, for $p\in P_{h^{\beta\alpha}}$ that
$$
\alpha(p)=\{\lim_{n\rightarrow \infty}\psi^\alpha(t_n,p)\,|\,\lim_{n\rightarrow \infty} t_n=-\infty\}\subset \crit f^\alpha\cap N^\alpha,
$$
and
$$
\omega(h^{\beta\alpha}(p))=\{\lim_{n\rightarrow \infty}\psi^\beta(t_n,h^{\beta\alpha}(p))\,|\,\lim_{n\rightarrow \infty} t_n=\infty\}\subset \crit f^\beta\cap N^\beta.
$$
Consider $b_p:\mR\rightarrow \mR$ with
$$
b_p(t)=\begin{cases} f^\alpha(\psi^\alpha(t,p))\quad\text{for}\quad& t\leq 0\\
f^\beta(\psi^\beta(t,h^{\beta\alpha}))&t>0.
\end{cases}
$$
The function $b_p$ is continuous, smooth outside zero and by the Lyapunov property $\frac{d}{dt}b_p(t)\leq 0$. By \cite[Lemma 3.1]{rotvandervorst} we know that $\crit f^\alpha\cap N^\alpha\subset S^\alpha$ and $\crit f^\beta\cap N^\beta\subset S^\beta$. Moreover $f^\alpha\bigr|_{S^\alpha}\equiv f^\alpha\bigr|_{S^\alpha}\equiv c$, hence $\lim_{t\rightarrow -\infty}b_p(t)=\lim_{t\rightarrow \infty}b_p(t)=c$. Hence $b_p$ is constant and it follows that $p\in S^\alpha$. Thus the full orbit through $p$ contained in $\Int N^\alpha$ and the full orbit through $h^{\beta\alpha}(p)$ is contained in $\Int N^\beta$. Thus $h^{\beta\alpha}$ is isolated with respect to the gradient flows.
\end{proof}

\section{Functoriality in local Morse homology}
\label{sec:functorialitylocalmorse}

We are interested in the functorial behavior of local Morse homology. Let us recall the definition of local Morse homology more in depth. Suppose $N^\alpha$ is an isolating neighborhood of the gradient flow of $f^\alpha$ an $g^\alpha$. The local stable and unstable manifolds of critical points inside $N^\alpha$ are defined by
\begin{align*}
W^{u}_\text{ loc}(x;N^\alpha)&:=\{p\in N^\alpha\,|\, \psi^{\alpha}(t,p)\in N^\alpha, \forall t<0,\text{ and } \lim_{t\rightarrow-\infty}\psi^\alpha(t,p)=x \},\\
W^{s}_\text{ loc}(x;N^\alpha)&:=\{p\in N^\alpha\,|\, \psi^{\alpha}(t,p)\in N^\alpha, \forall t>0,\text{ and } \lim_{t\rightarrow \infty}\psi^\alpha(t,p)=x \}.
\end{align*}
We write $W_{N^\alpha}(x,y)=W^{u}_\text{ loc}(x,y;N^\alpha)\cap W^s_\text{ loc}(y; N^\alpha)$, and $M_{N^\alpha}(x,y)=W_{N^\alpha}(x,y)/\mR$. We say that the gradient flow is Morse-Smale on $N^\alpha$, cf.\ \cite[Definition 3.5]{rotvandervorst}, if the critical points of $f^\alpha$ inside $N^\alpha$ are non-degenerate, and for each $p\in W_{N^\alpha}(x,y)$, we have that $T_pW^u(x)+T_p W^s(y)=T_pM^\alpha$. The intersection is said to be transverse and we write $W^u_\text{ loc}(x;N^\alpha)\pitchfork W^s_\text{ loc}(y;N^\alpha)$. Denote by $\cP^\alpha=(M^\alpha,f^\alpha,g^\alpha,N^\alpha,\co^\alpha)$ a choice of manifold (not necessarily closed), a function, a metric and an isolating neighborhood of the gradient flow, such that $(f^\alpha,g^\alpha)$ is Morse-Smale on $N^\alpha$, and a choice of orientations of the local unstable manifolds. For such a local Morse datum $\cP^\alpha$, we define the local Morse complex
$$
C_k(\cP^\alpha):=\crit_k f^\alpha\cap N^\alpha,\quad\partial(\cP^\alpha)(x):=\sum_{|y|=|x|-1}n_{N^\alpha,\text{ loc}}(x,y) y.
$$
Where $n_{N^\alpha,\text{loc}}(x,y)$ denotes the oriented count of points in $M_{N^\alpha}(x,y)$. The differential satisfies $\partial^2(\cP^\alpha)=0$, hence we can define local Morse homology. This not an invariant for $N^\alpha$ but crucially depends on the gradient flow. Compare for example the gradient flows of $f^\alpha(x)=x^2$ or $f^\beta(x)=-x^2$ on $\mR$ with isolating neighborhoods $N=[-1,1]$. The homology is invariant under homotopies $(f_\lambda,g_\lambda)$, as long as the gradient flows preserve isolation. The canonical isomorphisms, induced by continuation are denoted by $\Phi^{\beta\alpha}$. The local Morse homology recovers the Conley index of the gradient flow. 

If $h^{\beta\alpha}$ is isolated with respect to $\cP^\alpha$, $\cP^\beta$, we say it is transverse (with respect to $\cP^\alpha$ and $\cP^\beta$), if for all $p\in W^u_\text{ loc}(x;N^\alpha)\cap (h^{\beta\alpha})^{-1}(W^s_\text{ loc}(y,N^\beta))$, we have 
$$
dh^{\beta\alpha}T_p W^u(x)+T_{h^{\beta\alpha}(p)}W^s(y)=T_{h(p)}M^\beta.
$$
We write $W_{h^{\beta\alpha},\text{ loc}}(x,y)=W^u_\text{ loc}(x;N^\alpha)\cap W^s_\text{ loc}(y;N^\beta)$. The oriented intersection number is denoted by $n_{h^{\beta\alpha},\text{loc}}(x,y)$. 

\begin{figure}
\def\svgwidth{.9\textwidth}\begin{center}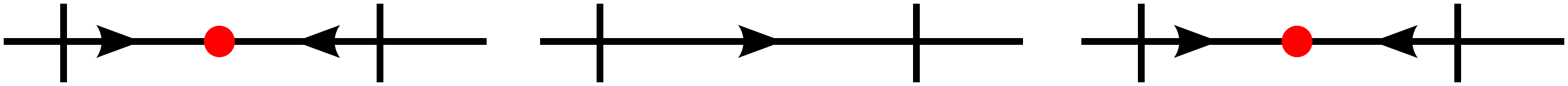\end{center}
\caption{All isolated maps induce chain maps in local Morse homology, but they are not necessarily functorial, as they do not capture the dynamical content. The gradient flows of $f^\alpha(x)=x^2$, $f^\beta=(x-3)^2$, and $f^\gamma(x)=x^2$ on $\mR$, with isolating neighborhood $N=[-1,1]$ are depicted. The identity maps are isolated. We compute that $\id^{\beta\alpha}_*=0$, $\id^{\gamma\beta}_*=0$, but $\id^{\gamma\alpha}_*$ is the identity. The problem is that $\id^{\gamma\beta}\circ\psi^\beta_{R}\circ \id^{\beta\alpha}$ is not isolated for all $R>0$.}
\label{fig:counterexample}
\end{figure}

\begin{proposition}Let $h^{\beta\alpha}\in \scrT(\cP^\alpha,\cP^\beta)$ and suppose $h^{\beta\alpha}$ is isolated with respect to $R^\alpha$ and $R^\beta$. 
\begin{itemize} 
\item Then $h_*^{\beta\alpha}(x):=\sum_{|x|=|y|}n_{h^{\beta\alpha},\text{loc}}(x,y)y$ is a chain map. 
\item Suppose $\cP^\gamma$ and $\cP^\delta$ are different local Morse data with 
$$
M^\alpha=M^\gamma,\quad N^\alpha=N^\gamma,\quad M^\beta=M^\delta\quad\text{and}\quad N^\beta=N^\delta,
$$
such that the gradient flow of $\cP^\alpha$ is isolated homotopic to the gradient flow of $\cP^\gamma$, the gradient flow of $\cP^\beta$ is isolated homotopic to the gradient flow of $\cP^\delta$, and $h^{\beta\alpha}\in \scrT(\cP^\alpha,\cP^\beta)$ and $h^{\delta,\gamma}\in \scrT(\cP^\gamma,\cP^\delta)$ are isolated homotopic through these isolated homotopies. Then $\Phi^{\delta\beta}_*h^{\beta\alpha}_*$ and $h^{\delta\gamma}_*\Phi^{\gamma\alpha}$ are chain homotopic. 
\end{itemize}
% If $h_{\lambda}$ is an isolated homotopy with to isolated homotopies of gradient flows on $M^\alpha$ and $M^\beta$ and $h_0=h^{\beta\alpha}$ is transverse to $\cP^\alpha$ and $\cP^\beta$, and $h_1=h^{\gamma\beta}$ is transverse with respect to $\cP^\gamma=(M^\alpha,f^\gamma,g^\gamma,N^\alpha,\co^\gamma)$ and $\cP^\delta=(M^\beta,f^\delta,g^\delta,N^\beta,\co^\delta)$, then $\Phi^{\delta\beta} h^{\beta\alpha}_*$ and $h^{\delta\gamma}_*\Phi^{\gamma\alpha}_*$ are chain homotopic. 
\label{prop:localmorseinduced}
\end{proposition}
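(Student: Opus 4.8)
\textbf{Proof strategy for Proposition~\ref{prop:localmorseinduced}.}
The plan is to localize the arguments of Sections~\ref{sec:chainmap} through~\ref{sec:functoriality} using the isolation hypotheses, so that the compactness and gluing statements carry over verbatim after we know that all relevant orbits stay inside the isolating neighborhoods. First I would establish the chain map property. The key point is that, because $h^{\beta\alpha}$ is an isolated map with respect to the isolating neighborhoods in $\cP^\alpha$ and $\cP^\beta$, any sequence $p_k\in W_{h^{\beta\alpha},\text{ loc}}(x,y')$ with $|x|=|y'|+1$ that converges in $M^\alpha$ must converge to a point $p$ whose backward orbit lies in $\Int N^\alpha$ and whose image has forward orbit in $\Int N^\beta$; otherwise a breaking would force a full orbit through a boundary point of $N^\alpha$ or $N^\beta$, contradicting that these are isolating neighborhoods for the gradient flows together with the isolated map condition (this is exactly the mechanism already used in the discussion surrounding Equation~\bref{eq:isolatedmapset} and Definition~\ref{defi:isolatedmap}). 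Consequently the local version of Proposition~\ref{prop:compactness} holds: the only limit points are in $W_{h^{\beta\alpha},\text{ loc}}(x,y')$, or broken configurations counted by $M_{N^\alpha}(x,y)\times W_{h^{\beta\alpha},\text{ loc}}(y,y')$ with $|y|=|y'|$, or by $W_{h^{\beta\alpha},\text{ loc}}(x,x')\times M_{N^\beta}(x',y')$ with $|x'|=|x|$. The gluing maps $\#^1,\#^2$ from Proposition~\ref{prop:gluing2} are local constructions (they only use the $\lambda$-Lemma near a critical point inside the isolating neighborhood and the Banach fixed point argument of Lemma~\ref{lem:gluing}), so they apply unchanged, with images landing in $W_{h^{\beta\alpha},\text{ loc}}(x,y')$ for $R_0$ large. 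The orientation analysis is purely local as well. Thus the local analogue of Theorem~\ref{prop:compactification} produces a compact oriented one-dimensional manifold with boundary, and counting boundary points with sign yields $h_{k-1}^{\beta\alpha}\partial^\alpha_k(\cP^\alpha)=\partial^\beta_k(\cP^\beta)h^{\beta\alpha}_k$ exactly as in Proposition~\ref{prop:chain}.

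For the second statement I would reproduce the dynamical cone construction of Proposition~\ref{prop:homotopy}, working now on $M^\alpha\times\mS^1$ and $M^\beta\times\mS^1$ with the product flows. Given an isolated homotopy $h_\lambda$ from $h^{\beta\alpha}$ to $h^{\delta\gamma}$ through transverse isolated maps, together with the isolated homotopies of gradient flows connecting $\cP^\alpha$ to $\cP^\gamma$ and $\cP^\beta$ to $\cP^\delta$, one forms the functions $F^\alpha,F^\beta$ and the map $H(x,\mu)=(h_{\omega(\mu)}(x),\mu)$ exactly as before. The essential new observation is that $N^\alpha\times\mS^1$ and $N^\beta\times\mS^1$ are isolating neighborhoods for the cone flows (this follows from \cite[Lemma~4.5]{rotvandervorst} and the fact that each $\phi^\alpha_{\omega(\mu)}$ has $N^\alpha$ isolating), and that $H$ is an isolated map with respect to them, since it is the suspension of the isolated homotopy $h_\lambda$ — precisely the viewpoint recorded in the paragraph preceding Proposition~\ref{prop:flowmaps} (``an isolated homotopy $h_\lambda$ can be viewed as an isolated map $H$''). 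Hence the first part of the present proposition applies to $H$ on the cone, so $H_*$ is a chain map. Reading off the lower-left corner of the matrix identity $H_{k-1}\Delta_k^\alpha=\Delta_k^\delta H_k$, with the block decompositions of $C_*(F^\alpha)$, $C_*(F^\beta)$ induced by the splitting~\bref{eq:splitting} and the boundary operators of the form given in Proposition~\ref{prop:homotopy}, produces the desired chain homotopy $P_*^{\delta\alpha}$ with $\Phi_k^{\delta\beta}h^{\beta\alpha}_k-h^{\delta\gamma}_k\Phi_k^{\gamma\alpha}=-\partial^\delta_{k+1}P_k^{\delta\alpha}-P_{k-1}^{\delta\alpha}\partial_k^{\alpha}$.

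The main obstacle, and the only place where genuine care beyond ``localize everything'' is needed, is the compactness step: one must verify that the isolated map hypothesis genuinely rules out \emph{all} the new ways a sequence in $W_{h^{\beta\alpha},\text{ loc}}(x,y')$ could escape — in particular, limits whose backward orbit (in the domain) exits $N^\alpha$, limits whose forward image orbit exits $N^\beta$, and mixed degenerations where breaking occurs simultaneously at an interior critical point and near the boundary. The dimension-counting argument of Proposition~\ref{prop:compactness} handles the interior breakings, and the definition of isolated map (together with compactness of $N^\alpha$, $N^\beta$ and the description of $S_{h^{\beta\alpha}}$ via $S^\alpha_-\cap(h^{\beta\alpha})^{-1}(S^\beta_+)$) is exactly what excludes the boundary degenerations; still, assembling these into a clean statement requires the observation that a convergent sequence of genuine connecting ``orbits with a detour through $h^{\beta\alpha}$'' has a limit lying in $S_{h^{\beta\alpha}}$, hence in the interior, so no gluing parameter runs off to a boundary point. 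Once this is in place the gluing and orientation parts are formal consequences of the already-established local versions of Lemma~\ref{lem:gluing} and Proposition~\ref{prop:gluing2}.
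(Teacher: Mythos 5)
Your proposal is correct and follows essentially the same route as the paper: localize the gluing maps of Proposition~\ref{prop:gluing2} by noting that the glued orbits converge geometrically to orbits contained in $\Int N^\alpha$ and $\Int N^\beta$, handle compactness via the identity $S_{h^{\beta\alpha}}=S^\alpha_-\cap(h^{\beta\alpha})^{-1}(S^\beta_+)=\bigcup_{x,y}W_{h^{\beta\alpha},\text{ loc}}(x,y)$ so that limits stay in the local moduli spaces, and obtain the chain homotopy by running the cone construction of Proposition~\ref{prop:homotopy} with the suspension $H$ of the isolated homotopy, which is itself isolated. The paper's proof is exactly this argument, so no further comment is needed.
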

\begin{proof}
We argue that the gluing maps constructed in Proposition~\ref{prop:gluing2} restrict to local gluing maps
\begin{align*}
\#^1:&M_{N^\alpha}(x,y)\times [R_0,\infty)\times W_{h^{\beta\alpha},\text{loc}}(y,y')\rightarrow W_{h^{\beta\alpha},\text{loc}}(x,y')\\
\#^2:&W_{h^{\beta\alpha},\text{loc}}(x,x')\times [R_0,\infty)\times M_{N^\beta}(x',y')\rightarrow W_{h^{\beta\alpha},\text{loc}}(x,y')
\end{align*}
Consider $\#^1$, let $\gamma_u\in M_{N^\alpha}(x,y)$ and $v\in W_{h^{\beta\alpha},\text{loc}}(y,y')$. Geometric convergence implies that the backwards orbit $\cO_-(\gamma_u\#^1_Rv)$ lies arbitrary close, for $R$ sufficiently large, to the images of $\cO(u)$ and $\cO_-(v)$. The latter are contained in $\Int N^\alpha$, hence $\cO_-(\gamma_u\#^1_R v)\subset \Int N^\alpha$. Similarly $\cO_+(h^{\beta\alpha}(u\#^1_Rv)\subset \Int N^\beta$ for $R$ sufficiently large since it must converge to $\cO_+(h^{\beta\alpha}(v))$. Thus for $R_0$ possibly larger than in Proposition~\ref{prop:gluing2} there is a well defined restriction of the gluing map $\#^1$. The situation for $\#^2$ is analogous.

For the compactness issues, observe that $S_{h^{\beta\alpha}}=S_-^\alpha\cap (h^{\beta\alpha})^{-1}(S^\beta_+)$ hence is compact. But it also equals the set $\bigcup_{x,y} W_{h^{\beta\alpha},\text{loc}}(x,y)$. Therefore if $p_k\in W_{h^{\beta\alpha},\text{loc}}(x,y)$ then it has a convergent subsequence $p_k\rightarrow p$ with $p\in W_{h^{\beta\alpha},\text{loc}}(x',y')$. Since $\cO_-(p_k)\subset \Int N^\alpha$ and $\cO_+(h^{\beta\alpha}(p_k))\subset \Int N^\beta$ it follows that the compactness issues are those described in Proposition~\ref{prop:compactness}, but then for the local moduli spaces.

Now consider the isolated homotopy. Isolation implies that we can follow the argument of Proposition~\ref{prop:homotopy} to construct isolating neighborhoods on $M^\alpha\times[0,1]$ and  $M^\beta\times [0,1]$ and a map $H^{\beta\delta}$ that is isolated with respect to these neighborhoods. Then the observation that $H^{\beta\delta}$ is a chain map implies that there exists a chain homotopy. 
\end{proof}

Recall that the local Morse homology of the isolating neighborhood of any pair $(f,g)$, which is not necessarily Morse-Smale, is defined by
$$
HM_*(f,g,N):=\varprojlim HM_*(\cP^\alpha)
$$
where the inverse limit runs over all local Morse data $\cP^\alpha$ whose gradient flows are isolated homotopic to the gradient flows of $(f,g)$ on $N$, with respect to the canonical isomorphisms.
\begin{proposition}
Let $(f^\alpha,g^\alpha)$ and $(f^\beta,g^\beta)$ pairs of functions of metrics on $M^\alpha$ and $M^\beta$, which are not assumed to be Morse-Smale. Suppose that $h^{\beta\alpha}:M^\alpha\rightarrow M^\beta$ is isolated with respect to isolating neighborhoods $N^\alpha$ and $N^\beta$. Then $h^{\beta\alpha}$ induces a map of local Morse homologies
$$
h_*^{\beta\alpha}:HM_*(f^\alpha,g^\alpha,N^\alpha)\rightarrow HM_*(f^\beta,g^\beta,N^\beta).
$$
Suppose $h^{\delta\gamma}$ is isolated homotopic to $h^{\beta\alpha}$ through isolated homotopies of gradient flows between $(f^\alpha,g^\alpha,N^\alpha)$ and $(f^\gamma,g^\gamma,N^\gamma)$ and $(f^\beta,g^\beta,N^\beta)$ and $(f^\delta,g^\delta,N^\delta)$. Then the diagram
$$
\xymatrix{HM_*(f^\alpha,g^\alpha,N^\alpha)\ar[r]^-{h^{\beta\alpha}_*}\ar[d]^{\Phi^{\gamma\alpha}_*}&HM_*(f^\beta,g^\beta,N^\beta)\ar[d]^{\Phi^{\delta\beta}_*}.\\
HM_*(f^\gamma,g^\gamma,N^\gamma)\ar[r]^-{h^{\delta\gamma}_*}&HM_*(f^\delta,g^\delta,N^\delta).}
$$
commutes.
\label{prop:degenerateinduced}
\end{proposition}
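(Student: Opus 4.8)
The plan is to reduce everything to the Morse--Smale situation already treated in Proposition~\ref{prop:localmorseinduced} and then pass to the inverse limits, with the openness of the isolated-map condition doing the work of keeping all the necessary homotopies isolated.

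To construct $h_*^{\beta\alpha}$ I would first produce a good representative. Being an isolating neighborhood is an open condition on the flow, and by Proposition~\ref{prop:isolatedmapsareopen} being an isolated map is an open condition on the triple $(\phi^\alpha,h^{\beta\alpha},\phi^\beta)$ in the compact--open topology; hence a sufficiently small perturbation of $(f^\alpha,g^\alpha)$ to a Morse--Smale pair and of $(f^\beta,g^\beta)$ to a Morse--Smale pair keeps $N^\alpha,N^\beta$ isolating, makes the linear homotopies isolated homotopies of gradient flows, and leaves $h^{\beta\alpha}$ isolated with respect to the perturbed gradient flows. This gives local Morse data $\cP^\alpha_0,\cP^\beta_0$ lying in the inverse systems defining $HM_*(f^\alpha,g^\alpha,N^\alpha)$ and $HM_*(f^\beta,g^\beta,N^\beta)$. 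Since transverse maps are generic (Theorem~\ref{thm:genericity}) and the isolated maps form an open set, a further small perturbation produces $\widetilde h^{\beta\alpha}\in\scrT(\cP^\alpha_0,\cP^\beta_0)$ that is still isolated and is joined to $h^{\beta\alpha}$ by an isolated homotopy. By Proposition~\ref{prop:localmorseinduced} it induces $(\widetilde h^{\beta\alpha})_*\colon HM_*(\cP^\alpha_0)\to HM_*(\cP^\beta_0)$, and composing with the canonical projection isomorphism $HM_*(f^\alpha,g^\alpha,N^\alpha)\to HM_*(\cP^\alpha_0)$ and the inverse of the canonical projection isomorphism $HM_*(f^\beta,g^\beta,N^\beta)\to HM_*(\cP^\beta_0)$ defines $h_*^{\beta\alpha}$.

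For independence of the choices I would take two such systems of choices and concatenate the small homotopies through $(f^\alpha,g^\alpha)$ and through $(f^\beta,g^\beta)$, obtaining isolated homotopies of gradient flows connecting the two choices of $\cP^\alpha$ and the two choices of $\cP^\beta$ (all perturbations taken small enough to stay in the open region where isolation persists), through which the two transverse representatives are isolated homotopic. The second part of Proposition~\ref{prop:localmorseinduced} then gives that the two induced maps on homology agree after transport by the relevant continuation isomorphisms, which is precisely that the two definitions of $h_*^{\beta\alpha}$ coincide; compatibility with the continuation isomorphisms internal to each inverse system is seen the same way, so $h_*^{\beta\alpha}$ is a well-defined map of inverse limits. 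For the commuting square I would pick small Morse--Smale perturbations $\cP^\alpha,\cP^\gamma$ of $(f^\alpha,g^\alpha),(f^\gamma,g^\gamma)$ and $\cP^\beta,\cP^\delta$ of $(f^\beta,g^\beta),(f^\delta,g^\delta)$, together with transverse isolated perturbations $\widetilde h^{\beta\alpha},\widetilde h^{\delta\gamma}$ of $h^{\beta\alpha},h^{\delta\gamma}$; concatenating the small homotopy from $\widetilde h^{\beta\alpha}$ to $h^{\beta\alpha}$, the given isolated homotopy from $h^{\beta\alpha}$ to $h^{\delta\gamma}$, and the small homotopy from $h^{\delta\gamma}$ to $\widetilde h^{\delta\gamma}$ exhibits $\widetilde h^{\beta\alpha}$ as isolated homotopic to $\widetilde h^{\delta\gamma}$ through isolated homotopies joining $\cP^\alpha$ to $\cP^\gamma$ and $\cP^\beta$ to $\cP^\delta$. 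Proposition~\ref{prop:localmorseinduced} then yields that $(\widetilde h^{\delta\gamma})_*$ and $(\widetilde h^{\beta\alpha})_*$ agree on homology after transport by the continuation isomorphisms between $HM_*(\cP^\alpha)$ and $HM_*(\cP^\gamma)$ and between $HM_*(\cP^\beta)$ and $HM_*(\cP^\delta)$, and unwinding the definitions of $h_*^{\beta\alpha},h_*^{\delta\gamma},\Phi^{\gamma\alpha}_*,\Phi^{\delta\beta}_*$ on the inverse limits gives the stated commutative diagram.

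The analytic content --- existence of induced chain maps and of the required chain homotopies in the Morse--Smale setting --- is already contained in Proposition~\ref{prop:localmorseinduced}, so the remaining difficulty is organizational: all the perturbations have to be chosen coherently and small enough in the compact--open topology that every homotopy appearing above stays isolated, which is exactly what Proposition~\ref{prop:isolatedmapsareopen} makes possible. The point requiring the most care is that $h^{\beta\alpha}$ need not be isolated with respect to an \emph{arbitrary} local Morse datum of the inverse system, only with respect to small perturbations of $(f^\alpha,g^\alpha)$ and $(f^\beta,g^\beta)$; this is why the induced map must be built from one such perturbation and then shown to be independent of it, rather than defined term by term over the whole system.
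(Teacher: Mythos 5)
Your proposal is correct and follows essentially the same route as the paper: perturb $(f^\alpha,g^\alpha)$ and $(f^\beta,g^\beta)$ to Morse--Smale data and $h^{\beta\alpha}$ to a transverse map while preserving isolation via Proposition~\ref{prop:isolatedmapsareopen} and Theorem~\ref{thm:genericity}, apply Proposition~\ref{prop:localmorseinduced}, and use concatenated isolated homotopies to show independence of choices and hence well-definedness on the inverse limits. Your closing remark about why the map must be built from a single small perturbation rather than termwise over the whole inverse system correctly identifies the only delicate point.
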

\begin{proof}
Let $\cP^\gamma=(M^\alpha,f^\gamma,g^\gamma,N^\alpha,\co^\gamma)$ and $\cP^\delta=(M^\beta,f^\delta,g^\delta,N^\beta,\co^\delta)$ be local Morse data such that 
$$(f^\gamma,g^\gamma)\in \IMS(f^\alpha,g^\alpha,N^\alpha),\quad (f^\delta,g^\delta)\in \IMS(f^\beta,g^\beta,N^\beta),$$ and $h^{\beta\alpha}$ is isolated for the isolated homotopies connecting the $(f^\alpha,g^\alpha)$ with  $(f^\gamma,g^\gamma)$ and $(f^\beta,g^\beta)$ with $(f^\delta,g^\delta)$ and $h^{\beta\alpha}\in \scrT(\cP^\gamma,\cP^\delta)$. This is possible by Proposition~\ref{prop:isolatedmapsareopen}, and \cite[Corollary 3.12]{rotvandervorst}. By the density of transverse maps Theorem~\ref{thm:genericity}, there exist a small perturbation $h^{\delta\gamma}$ isolated homotopic to $h^{\beta\alpha}$ by this homotopy. By Proposition~\ref{prop:localmorseinduced} we get a map
$$
h_*^{\delta\gamma}:HM_*(\cP^\gamma)\rightarrow HM_*(\cP^\delta).
$$
Moreover, given different local Morse data $\cP^\epsilon$ and $\cP^\zeta$ as above, we can construct an isolated map $h^{\zeta\epsilon}$ isolated homotopic to $h^{\beta\alpha}$ and hence also to $h^{\delta\beta}$ by concatenation. From~Proposition~\ref{prop:localmorseinduced} it follows that there is the following commutative diagram
$$
\xymatrix{HM_*(\cP^\gamma)\ar[d]_{\Phi^{\epsilon\gamma}_*}\ar[r]^{h^{\delta\gamma}_*}&HM_*(\cP^\delta)\ar[d]^{\Phi_*^{\zeta\delta}}\\
HM_*(\cP^\epsilon)\ar[r]_{h^{\zeta\epsilon}_*}&HM_*(\cP^\zeta)
}
$$
Which means that we have an induced map $h^{\beta\alpha}_*:HM_*(f^\alpha,g^\alpha,N^\alpha)\rightarrow HM_*(f^\beta,g^\beta,N^\beta)$ between the inverse limits. The arguments for the homotopy of the maps is analogous.
\end{proof}

 The chain maps defined above are not necessarily functorial. Consider for example Figure~\ref{fig:counterexample}. The problem is that, in the proof of functoriality for Morse homology, we need the fact that $h^{\gamma\beta}\circ \psi^\beta_R\circ h^{\beta\alpha}$ is isolated for all $R\geq 0$ to establish functoriality. If we require this almost homotopy to be isolated the proof of functoriality follows mutatis mutandis. 
 \begin{proposition}
Let $h^{\beta\alpha}$ and $h^{\gamma\beta}$ be transverse and isolated. Assume that $h^{\gamma\beta}\circ h^{\beta\alpha}$ is transverse and isolated, and assume that $h^{\gamma\beta}\circ\psi^\beta_R\circ h^{\beta\alpha}$ is an isolated homotopy for $R\in[0,R']$ for each $R'>0$. Then $h^{\gamma\beta}_*h^{\beta\alpha}_*$ and $\left(h^{\gamma\beta}\circ h^{\beta\alpha}\right)_*$ are chain homotopic.
\label{prop:localmorsefunctorial}
\end{proposition}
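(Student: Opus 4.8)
The plan is to transcribe the argument of Section~\ref{sec:functoriality} word for word, replacing every moduli space by its local counterpart and invoking the isolation hypotheses at exactly the two points where closedness of the base manifolds was used in the proof of Proposition~\ref{prop:functoriality}: the compactness argument and the verification that glued configurations lie in the local moduli space. Write $\psi^\beta$ for the negative gradient flow of $(f^\beta,g^\beta)$. After a small perturbation of $h^{\gamma\beta}$ --- which remains isolated by Proposition~\ref{prop:isolatedmapsareopen} and which we may take transverse by parametric transversality and Theorem~\ref{thm:genericity} --- we may assume the local moduli spaces
\[
W_{h^{\gamma\beta},h^{\beta\alpha},\text{loc}}(x,z)=\{(p,R)\in M^\alpha\times(0,\infty)\mid p\in W^u_{\text{loc}}(x;N^\alpha),\ h^{\gamma\beta}\circ\psi^\beta_R\circ h^{\beta\alpha}(p)\in W^s_{\text{loc}}(z;N^\gamma)\}
\]
are manifolds of dimension $|x|-|z|+1$, for $x\in\crit f^\alpha\cap N^\alpha$ and $z\in\crit f^\gamma\cap N^\gamma$.

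First I would establish the local analogue of Proposition~\ref{prop:functcompactness}. The hypothesis that $h^{\gamma\beta}\circ\psi^\beta_R\circ h^{\beta\alpha}$ is an isolated map for every $R$ says precisely, via Definition~\ref{defi:isolatedmap}, that a point whose backward orbit stays in $N^\alpha$, whose forward $\psi^\beta$-detour stays in $N^\beta$, and whose further image stays in $N^\gamma$, in fact has all these orbit segments in the interiors. Combining this with the Broken Orbit Lemma applied inside $\Int N^\beta$, exactly as in Proposition~\ref{prop:functcompactness}, a sequence $(p_k,R_k)$ with $R_k\to\infty$ has, along a subsequence, $p_k\to p\in W_{h^{\beta\alpha},\text{loc}}(x,y)$ and $\psi^\beta_{R_k}\circ h^{\beta\alpha}(p_k)\to q\in W_{h^{\gamma\beta},\text{loc}}(y,z)$ for some $y\in\crit f^\beta\cap N^\beta$ with $|x|=|y|=|z|$; no escape to $\partial N^\beta$ occurs because isolation forces these segments into the interior. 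Next I would localize the gluing map of Proposition~\ref{prop:functgluing}: the construction via Lemma~\ref{lem:gluing} in $M^\beta$ is unchanged, and the only new point --- that the glued configuration lies in the local moduli space --- follows from the geometric convergence argument already used in the proof of Proposition~\ref{prop:localmorseinduced}, since for $R$ large the backward orbit of $(h^{\beta\alpha})^{-1}(\psi^\beta(-R,\rho(R)))$ is $C^0$-close to the backward orbit of $u\in W_{h^{\beta\alpha},\text{loc}}(x,y)$, which lies in $\Int N^\alpha$, and the forward orbit of its image is close to the forward orbit of $v\in W_{h^{\gamma\beta},\text{loc}}(y,z)$, which lies in $\Int N^\gamma$. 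The orientation analysis of Section~\ref{sec:functoriality} applies verbatim.

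Putting these together yields the local analogue of Proposition~\ref{prop:functcompactification}: for $R$ sufficiently large, $h^{\gamma\beta}\circ\psi^\beta_R\circ h^{\beta\alpha}\in\scrT(\cP^\alpha,\cP^\gamma)$ is isolated, and $W_{h^{\gamma\beta},h^{\beta\alpha},\text{loc}}(x,z)\cap M^\alpha\times(R,\infty)$ compactifies to a compact oriented $1$-manifold with boundary $-W_{h^{\gamma\beta}\circ\psi^\beta_R\circ h^{\beta\alpha},\text{loc}}(x,z)\cup\bigcup_{|y|=|x|}W_{h^{\beta\alpha},\text{loc}}(x,y)\times W_{h^{\gamma\beta},\text{loc}}(y,z)$. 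Counting boundary points with sign, as in Proposition~\ref{prop:functoriality}, gives the chain-level identity $(h^{\gamma\beta}\circ\psi^\beta_R\circ h^{\beta\alpha})_*=h^{\gamma\beta}_*h^{\beta\alpha}_*$ for such $R$. Finally, the hypothesis that $R\mapsto h^{\gamma\beta}\circ\psi^\beta_R\circ h^{\beta\alpha}$ is an isolated homotopy on $[0,R']$ for every $R'$ makes the path $\lambda\mapsto h^{\gamma\beta}\circ\psi^\beta_{\lambda R}\circ h^{\beta\alpha}$, $\lambda\in[0,1]$, an isolated homotopy of transverse maps (after a generic perturbation as in Proposition~\ref{prop:localmorseinduced}), so the isolated homotopy invariance of Proposition~\ref{prop:localmorseinduced} makes $(h^{\gamma\beta}\circ\psi^\beta_R\circ h^{\beta\alpha})_*$ and $(h^{\gamma\beta}\circ h^{\beta\alpha})_*$ chain homotopic. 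Composing the two relations gives $h^{\gamma\beta}_*h^{\beta\alpha}_*\simeq(h^{\gamma\beta}\circ h^{\beta\alpha})_*$.

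The genuinely new content --- and the main obstacle --- is the bookkeeping that the isolation hypotheses rule out exactly those boundary degenerations absent from the list above: in particular that the orbit segments in $M^\beta$ produced by the detour $\psi^\beta_R$ cannot drift off to $\partial N^\beta$ as $R\to\infty$, and that the limiting broken configurations are honestly assembled from local (un)stable manifolds rather than global ones. This is dispatched by unwinding Definition~\ref{defi:isolatedmap} for the composite almost-homotopy together with the Broken Orbit Lemma, precisely as the interiority claims are handled in the proof of Proposition~\ref{prop:localmorseinduced}; everything else is a transcription of Section~\ref{sec:functoriality}.
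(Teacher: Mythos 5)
Your proposal is correct and follows exactly the route the paper intends: the paper gives no written proof of Proposition~\ref{prop:localmorsefunctorial} beyond the remark that, once $h^{\gamma\beta}\circ\psi^\beta_R\circ h^{\beta\alpha}$ is assumed isolated for all $R$, the argument of Section~\ref{sec:functoriality} goes through \emph{mutatis mutandis}, with isolation substituting for compactness of the base manifolds just as in Proposition~\ref{prop:localmorseinduced}. Your write-up supplies more detail than the paper does, and correctly identifies the two places (compactness as $R\to\infty$ and the localization of the gluing map) where the isolation hypotheses are actually used.
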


\begin{proposition}
Let $(f^\alpha,g^\alpha)$, $(f^\beta,g^\beta)$, and $(f^\gamma,g^\gamma)$ be pairs of functions and metrics on manifolds $M^\alpha,M^\beta,M^\gamma$, and suppose $h^{\gamma\beta}$ and $h^{\beta\alpha}$ are flow maps with respect to the gradient flows. Let $N^{\gamma}$ be an isolating neighborhood of the gradient flow of $(f^\gamma,g^\gamma)$. Set
$$
N^{\beta}:=(h^{\gamma\beta})^{-1}(N^\gamma),\quad N^\alpha=(h^{\beta\alpha})^{-1}(N^\alpha). 
$$
Then these are isolating neighborhoods, the maps $h^{\gamma\beta}$, $h^{\beta\alpha}$ and $h^{\gamma\beta}\circ h^{\beta\alpha}$ are isolated and the following diagram commutes
$$
\xymatrix{HM_*(f^\alpha,g^\alpha,N^\alpha)\ar[rd]_{(h^{\gamma\beta}\circ h^{\beta\alpha})_*}\ar[r]^{h^{\beta\alpha}_*}&HM_*(f^\beta,g^\beta,N^\beta)\ar[d]^{h^{\gamma\beta}_*}\\
&HM_*(f^\gamma,g^\gamma,N^\gamma)}
$$
\label{prop:degeneratefunctoriality}
\end{proposition}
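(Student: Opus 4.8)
The plan is to reduce this statement to the already-established Proposition~\ref{prop:localmorsefunctorial} by verifying its hypotheses in the flow-map setting. First I would invoke Proposition~\ref{prop:flowmaps}: since $h^{\gamma\beta}$ is a flow map with respect to the gradient flows, $N^\beta=(h^{\gamma\beta})^{-1}(N^\gamma)$ is an isolating neighborhood for $\psi^\beta$, and since $h^{\beta\alpha}$ is a flow map, $N^\alpha=(h^{\beta\alpha})^{-1}(N^\beta)$ is an isolating neighborhood for $\psi^\alpha$. Proposition~\ref{prop:flowmaps} also directly gives that $h^{\beta\alpha}$ and $h^{\gamma\beta}$ are isolated with respect to these neighborhoods, that $h^{\gamma\beta}\circ h^{\beta\alpha}$ is isolated with respect to $N^\alpha=(h^{\gamma\beta}\circ h^{\beta\alpha})^{-1}(N^\gamma)$, and — crucially — that $h^{\gamma\beta}\circ\psi^\beta_R\circ h^{\beta\alpha}$ is isolated for all $R$, which is precisely the hypothesis needed to get an isolated homotopy as $R$ ranges over $[0,R']$ (openness in the compact-open topology from Proposition~\ref{prop:isolatedmapsareopen} is not even needed here, since each member of the family is literally isolated).

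Next I would address transversality. The maps $h^{\beta\alpha}$, $h^{\gamma\beta}$, and their composition are a priori only isolated, not transverse, but by the genericity statement Theorem~\ref{thm:genericity} (density of transverse maps) together with Proposition~\ref{prop:degenerateinduced}, we may perturb within the isolated homotopy class to transverse representatives $\widetilde h^{\beta\alpha}\in\scrT(\cP^\alpha,\cP^\beta)$ and $\widetilde h^{\gamma\beta}\in\scrT(\cP^\beta,\cP^\gamma)$ for suitable local Morse data $\cP^\alpha,\cP^\beta,\cP^\gamma$ with underlying isolating neighborhoods $N^\alpha,N^\beta,N^\gamma$; a further generic perturbation ensures $\widetilde h^{\gamma\beta}\circ\widetilde h^{\beta\alpha}$ is transverse as well, and by Proposition~\ref{prop:isolatedmapsareopen} all these perturbations can be taken small enough that the maps and the family $\widetilde h^{\gamma\beta}\circ\psi^\beta_R\circ\widetilde h^{\beta\alpha}$ remain isolated. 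At this point all hypotheses of Proposition~\ref{prop:localmorsefunctorial} are met, so $\widetilde h^{\gamma\beta}_*\widetilde h^{\beta\alpha}_*$ and $(\widetilde h^{\gamma\beta}\circ\widetilde h^{\beta\alpha})_*$ are chain homotopic, hence induce the same map on local Morse homology. Passing to the inverse limit over local Morse data — using Proposition~\ref{prop:degenerateinduced} to see that the induced maps $h^{\beta\alpha}_*$, $h^{\gamma\beta}_*$, $(h^{\gamma\beta}\circ h^{\beta\alpha})_*$ are well defined on $HM_*(f,g,N)$ and independent of the chosen transverse representative — yields the commutativity of the triangle.

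The main obstacle I anticipate is the bookkeeping in the perturbation step: one must choose the transverse representatives $\widetilde h^{\beta\alpha}$, $\widetilde h^{\gamma\beta}$ compatibly so that \emph{simultaneously} (a) each is transverse, (b) the composition is transverse, and (c) the whole family $\{\widetilde h^{\gamma\beta}\circ\psi^\beta_R\circ\widetilde h^{\beta\alpha}\}_{R\in[0,R']}$ stays isolated, and one must check that the resulting induced maps agree with the ones defined on the inverse limits independently of these choices. The isolation part of (c) is actually automatic from Proposition~\ref{prop:flowmaps} provided the perturbations are $C^0$-small, because the relevant property is open; the transversality parts follow from parametric transversality exactly as in the proof of Proposition~\ref{prop:functcompactification}. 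So the argument is essentially a matter of quoting Propositions~\ref{prop:flowmaps}, \ref{prop:isolatedmapsareopen}, \ref{prop:degenerateinduced}, and \ref{prop:localmorsefunctorial} in the right order, with the genericity theorem supplying the transverse perturbations; no new moduli-space analysis is required.
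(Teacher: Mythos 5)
Your proposal is correct and follows exactly the paper's route: the paper's proof is literally the one-line statement that the proposition follows by combining Propositions~\ref{prop:flowmaps}, \ref{prop:localmorsefunctorial} and~\ref{prop:degenerateinduced}, and your write-up is a faithful (and more detailed) expansion of that, correctly identifying that Proposition~\ref{prop:flowmaps} supplies the isolation of the whole family $h^{\gamma\beta}\circ\psi^\beta_R\circ h^{\beta\alpha}$ needed as the hypothesis of Proposition~\ref{prop:localmorsefunctorial}, with genericity and openness handling the transverse perturbations.
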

\begin{proof}
The proposition follows from combining Propositions~\ref{prop:flowmaps},~\ref{prop:localmorsefunctorial} and ~\ref{prop:degenerateinduced}.
\end{proof}

Theorem~\ref{thm:localmorsefunctoriality} now follows from the fact that isolated homotopic maps induce the same maps in local Morse homology.  

\section{Functoriality in Morse-Conley-Floer homology}
\label{sec:functorialitymorseconleyfloer}

We use the induced maps of local Morse homology to define induced maps for flow maps in Morse-Conley-Floer homology. 

\begin{theorem}
Let $h^{\beta\alpha}:M^\alpha\rightarrow M^\beta$ be a flow map. Let $S^\beta\subset M^\beta$ be an isolated invariant set. Then $S^\alpha=(h^{\beta\alpha})^{-1}(S^\beta)$ is an isolated invariant set and there is an induced map
$$
h^{\beta\alpha}_*:HI_*(S^\alpha,\phi^\alpha)\rightarrow HI_*(S^\beta,\phi^\beta),
$$
which is functorial: The identity is mapped to the identity and the diagram
$$
\xymatrix{HI_*((h^{\gamma\beta}\circ h^{\beta\alpha})^{-1}(S^\gamma),\phi^\alpha)\ar[r]^-{h^{\beta\alpha}_*}\ar[dr]_{(h^{\gamma\beta}\circ h^{\beta\alpha})_*}&HI_*((h^{\gamma\beta})^{-1}(S^\gamma),\phi^\beta)\ar[d]^{h^{\gamma\beta}_*}\\
&HI_*(S^\gamma,\phi^\gamma).}
$$
commutes.
\label{thm:morseconleyfloerfunctoriality}
\end{theorem}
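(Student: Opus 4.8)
The idea is to lift the statement to the level of local Morse homology of Lyapunov functions, where Propositions~\ref{prop:flowmaps},~\ref{prop:lyapunovpullback},~\ref{prop:degenerateinduced} and~\ref{prop:localmorsefunctorial} apply, and then to pass to the inverse limits defining $HI_*$.

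First, the object-level claim is immediate from Proposition~\ref{prop:flowmaps}: for any isolating neighborhood $N^\beta$ of $S^\beta$, the set $N^\alpha=(h^{\beta\alpha})^{-1}(N^\beta)$ is an isolating neighborhood of $\phi^\alpha$ with $S^\alpha=(h^{\beta\alpha})^{-1}(S^\beta)=\Inv(N^\alpha,\phi^\alpha)$, so $HI_*(S^\alpha,\phi^\alpha)$ is defined. To construct the induced map, fix a Lyapunov datum $(f^\beta_\phi,e^\beta,N^\beta)$ for $(S^\beta,\phi^\beta)$. By Proposition~\ref{prop:lyapunovpullback}, $f^\alpha:=f^\beta_\phi\circ h^{\beta\alpha}$ is a Lyapunov function for $(S^\alpha,\phi^\alpha)$ on $N^\alpha$, and for any metric $e^\alpha$ the map $h^{\beta\alpha}$ is isolated with respect to the gradient flows of $(f^\alpha,e^\alpha)$ and $(f^\beta_\phi,e^\beta)$. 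Proposition~\ref{prop:degenerateinduced}, applied after a generic perturbation of $h^{\beta\alpha}$ to a transverse map (Theorem~\ref{thm:genericity}), yields an induced map $h^{\beta\alpha}_*\colon HM_*(f^\alpha,e^\alpha,N^\alpha)\to HM_*(f^\beta_\phi,e^\beta,N^\beta)$ and, crucially, a square showing it commutes with the continuation isomorphisms $\Phi$. Since $HI_*(S^\alpha,\phi^\alpha)$ and $HI_*(S^\beta,\phi^\beta)$ are inverse limits of such local Morse homologies over systems of continuation isomorphisms, every projection onto a single local Morse homology is an isomorphism; conjugating the map above by two such projections defines $h^{\beta\alpha}_*\colon HI_*(S^\alpha,\phi^\alpha)\to HI_*(S^\beta,\phi^\beta)$, and the commuting square of Proposition~\ref{prop:degenerateinduced}, applied to two different pulled-back Lyapunov data, shows the result is independent of all choices.

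Finally, functoriality. That the identity induces the identity is clear: $\id$ is a flow map pulling a Lyapunov datum back to itself, and on the chain level the induced map of the identity is the identity because $W_{\id}(x,x)=\{x\}$ carries sign $+1$. For the composition of flow maps $h^{\gamma\beta}$ and $h^{\beta\alpha}$, fix a Lyapunov datum $(f^\gamma_\phi,e^\gamma,N^\gamma)$ on $S^\gamma$ and pull it back successively via Proposition~\ref{prop:lyapunovpullback} to Lyapunov data $(f^\beta:=f^\gamma_\phi\circ h^{\gamma\beta},e^\beta,N^\beta)$ and $(f^\alpha:=f^\beta\circ h^{\beta\alpha},e^\alpha,N^\alpha)$, so that $f^\alpha=f^\beta\circ h^{\beta\alpha}$ and $f^\beta=f^\gamma_\phi\circ h^{\gamma\beta}$. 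By Propositions~\ref{prop:flowmaps} and~\ref{prop:lyapunovpullback} each of $h^{\beta\alpha}$, $h^{\gamma\beta}$ and $h^{\gamma\beta}\circ h^{\beta\alpha}$ is isolated for the corresponding pulled-back gradient flows; after perturbing these data to Morse--Smale form (keeping the maps isolated, by Proposition~\ref{prop:isolatedmapsareopen}) and perturbing the maps to transverse ones (generic), one is in the setting of Proposition~\ref{prop:localmorsefunctorial}. The only remaining hypothesis to verify is that $h^{\gamma\beta}\circ\psi^\beta_R\circ h^{\beta\alpha}$ is an isolated homotopy for $R\in[0,R']$, for every $R'$, and this is the main obstacle. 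I expect to prove it by a Lyapunov-function argument modelled on the proof of Proposition~\ref{prop:lyapunovpullback}: for $p$ in the set of~\bref{eq:isolatedmapset} associated to $h^{\gamma\beta}\circ\psi^\beta_R\circ h^{\beta\alpha}$, form the function equal to $t\mapsto f^\alpha(\psi^\alpha(t,p))$ on $(-\infty,0]$, to $t\mapsto f^\beta(\psi^\beta(t,h^{\beta\alpha}(p)))$ on $[0,R]$, and to the $f^\gamma_\phi$-trace of the forward $\psi^\gamma$-orbit of $h^{\gamma\beta}(\psi^\beta(R,h^{\beta\alpha}(p)))$ on $[R,\infty)$; it is continuous since $f^\alpha=f^\beta\circ h^{\beta\alpha}$ and $f^\beta=f^\gamma_\phi\circ h^{\gamma\beta}$, it is non-increasing by the Lyapunov/gradient property, and both its limits at $\pm\infty$ equal the common value of $f^\gamma_\phi$ on $S^\gamma$ because the relevant $\alpha$- and $\omega$-limit sets lie in the critical set inside the isolating neighborhoods, hence in the isolated invariant sets, by~\cite[Lemmas 3.1 and 3.3]{rotvandervorst}. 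Therefore it is constant, which forces $p$ and its image to lie on critical orbits inside the isolated invariant sets, hence in the interiors of $N^\alpha$ and $N^\gamma$, uniformly in $R$. With this, Proposition~\ref{prop:localmorsefunctorial} gives a chain homotopy $h^{\gamma\beta}_*h^{\beta\alpha}_*\simeq(h^{\gamma\beta}\circ h^{\beta\alpha})_*$ on the pulled-back data, and the compatibility squares of Proposition~\ref{prop:degenerateinduced} propagate it through the inverse limits, yielding the commuting triangle.
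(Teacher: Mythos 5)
Your overall strategy coincides with the paper's: pull back a Lyapunov datum along the flow map via Proposition~\ref{prop:lyapunovpullback}, obtain the induced map on local Morse homology from Proposition~\ref{prop:degenerateinduced} after generic perturbation, and verify the hypothesis of Proposition~\ref{prop:localmorsefunctorial} by means of the piecewise function built from the Lyapunov traces along $\psi^\alpha$ on $(-\infty,0]$, $\psi^\beta$ on $[0,R]$ and $\psi^\gamma$ on $[R,\infty)$. That last argument --- continuity from $f^\alpha=f^\beta\circ h^{\beta\alpha}$ and $f^\beta=f^\gamma\circ h^{\gamma\beta}$, monotonicity from the Lyapunov property, equal limits $c$ at $\pm\infty$, hence constancy and $S_{h^{\gamma\beta}\circ\psi^\beta_R\circ h^{\beta\alpha}}\subset S^\alpha$ uniformly in $R$ --- is exactly the paper's proof of the crucial isolation of the almost-homotopy, so the functoriality part of your plan is correct and complete in outline.

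The one step that is genuinely underargued is the independence of $h^{\beta\alpha}_*$ from the choice of Lyapunov datum, which is what makes the map well defined on the inverse limits. Two Lyapunov data $(f^\beta,e^\beta,N^\beta)$ and $(f^\delta,e^\delta,N^\delta)$ for $(S^\beta,\phi^\beta)$ will in general have \emph{different} isolating neighborhoods, whereas Proposition~\ref{prop:degenerateinduced} compares induced maps only along isolated homotopies, which by definition keep the isolating neighborhoods fixed; so ``applying the commuting square to two different pulled-back Lyapunov data'' is not directly available. The paper bridges this by first passing to the intersections $N^\beta\cap N^\delta$ and $N^\alpha\cap N^\gamma$, which are again isolating neighborhoods by \cite[Theorem 4.7]{rotvandervorst}, observing that $h^{\beta\alpha}$ remains isolated with respect to these because $S_{h^{\beta\alpha}}\subset S^\alpha$ and $h^{\beta\alpha}(S_{h^{\beta\alpha}})\subset S^\beta$ for any choice of Lyapunov function, and then assembling a ladder of commutative squares (restriction to the intersection, continuation between the two Lyapunov gradient flows on the intersection, restriction back) whose vertical maps are isomorphisms. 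You need to insert this intermediate comparison; with it, the rest of your plan goes through as written.
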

\begin{proof}
The induced map is defined as follows. Let $f^\beta$ be a Lyapunov function\footnote{We previously denoted this by $f_{\phi^\beta}$ but this notation is too unwieldy here}  with respect to the isolating neighborhood $N^\beta$ of $(S^\beta,\phi^\beta)$. Then $N^\alpha:=(h^{\beta\alpha})^{-1}(N^\beta)$ is an isolating neighborhood of $S^\alpha=(h^{\beta\alpha})^{-1}(S^\beta)$, and $f^\alpha:=f^\beta\circ h^{\beta\alpha}$ is a Lyapunov function by Proposition~\ref{prop:lyapunovpullback}. Moreover $h^{\beta\alpha}$ is isolated with respect to the gradient flows for any two metrics $g^\alpha$ and $g^\beta$. By Proposition~\ref{prop:degenerateinduced} we have an induced map
$$
h^{\beta\alpha}_*:HM_*(f^\alpha,g^\alpha,N^\alpha)\rightarrow HM_*(f^\beta,g^\beta,N^\beta)
$$
computed by perturbing everything to a transverse situation, preserving isolation, and counting as in Proposition~\ref{prop:localmorseinduced}. 

We now want to prove that the induced map passes to the inverse limit. If $f^\delta$ is another Lyapunov function with respect to an isolating neighborhood $N^\delta$ of $(S^\beta,\phi^\beta)$, and $N^\gamma=(h^{\beta\alpha})^{-1}(N^\delta)$, $f^\gamma=f^\delta\circ h^{\beta\alpha}$ then we cannot directly compare the local Morse homologies. However, as in \cite[Theorem 4.7]{rotvandervorst} the sets $N^\beta\cap N^\delta$ and $N^\alpha\cap N^\gamma$ are isolating neighborhoods, and $h^{\beta\alpha}$ is isolated with respect to these since $S_{h^{\beta\alpha}}\subset S^\alpha$ and $h^{\beta\alpha}(S_{h^{\beta\alpha}})\subset S^\beta$. 

The claim is that the following diagram in local Morse homology commutes
$$
\xymatrix{HM_*(f^\alpha,g^\alpha,N^\alpha)\ar[r]^-{h^{\beta\alpha}_*}\ar[d]&HM_*(f^\beta,g^\beta,N^\beta)\ar[d]\\
HM_*(f^\alpha,g^\alpha,N^\alpha\cap N^\gamma)\ar[r]^-{h^{\beta\alpha}_*}\ar[d]_{\Phi^{\gamma\alpha}_*}&HM_*(f^\beta,g^\beta,N^\beta\cap N^\delta)\ar[d]^{\Phi^{\delta\beta}_*}\\
HM_*(f^\gamma,g^\gamma,N^\alpha\cap N^\gamma)\ar[r]^-{h^{\beta\alpha}_*}\ar[d]&HM_*(f^\delta,g^\delta,N^\beta\cap N^\delta)\ar[d]\\
HM_*(f^\gamma,g^\gamma,N^\gamma)\ar[r]^-{h^{\beta\alpha}_*}&HM_*(f^\delta,g^\delta,N^\delta)}
$$
where the vertical maps are all isomorphisms. This proves that we have a well defined map in Morse-Conley-Floer homology. 

The commutativity of the square in the middle is induced by continuation as in Proposition~\ref{prop:degenerateinduced}. Gradient flows of different Lyapunov functions on the same isolating neighborhood are isolated homotopic by the proof of \cite[Theorem 4.4]{rotvandervorst}. Moreover since the set  $S_{h^{\beta\alpha}}$ of Equation~\bref{eq:isolatedmapset} is contained in $S^\alpha$ and $h^{\beta\alpha}(S^\beta)\subset S^\beta$ for any choice of Lyapunov function. The isolated homotopies of the gradient flows preserve the isolation of $h^{\beta\alpha}$. 

The upper square in the diagram commutes because the set $S_{h^{\beta\alpha}}$ of Equation~\bref{eq:isolatedmapset} is contained in $S^\alpha$ and $h^{\beta\alpha}(S^\alpha)\subset S^\beta$. It is possible to perturb the function $f^\alpha$ to a Morse-Smale function preserving the isolation in $N^\alpha\cap N^\gamma$ and similarly for $f^\beta$, while also preserving isolation of $h^{\beta\alpha}$. We use the openness of isolated maps, cf.~Proposition~\ref{prop:isolatedmapsareopen} and genericity of transverse maps, Theorem~\ref{thm:genericity}. Then the counts with respect to $N^\alpha$ and $N^\alpha\cap N^\gamma$ and $N^\beta$ and $N^\beta\cap N^\delta$ are the same for such perturbations from which it follows that the diagram commutes. The situation for the lower square is completely analogous. 

We have a well defined map in Morse-Conley-Floer homology. Functoriality might not be clear at this point, however If $h^{\gamma\beta}:M^\beta\rightarrow M^\gamma$ is another flow map, and $f^\gamma$ is a Lyapunov function for $\phi^\gamma$, then $f^\beta=f^\gamma\circ h^{\gamma\beta}$ and $f^\alpha=f^\gamma\circ h^{\gamma\beta}\circ h^{\beta\alpha}$ are Lyapunov functions, for the obvious isolated invariant sets and neighborhoods. Choose auxiliary metrics $g^\alpha,g^\beta,g^\gamma$, and the denote the gradient flows of the Lyapunov functions by $\psi^\alpha,\psi^\beta,\psi^\gamma$. Define for $R\geq 0$ and $p\in S_{h^{\gamma\beta}\circ \psi^{\beta}_R\circ h^{\beta\alpha}}$ the map $b_{p,R}:\mR\rightarrow \mR$ by
$$
b_{p,R}(t)=\begin{cases}
f^\alpha(\psi^\alpha(t,p))\qquad\qquad\qquad\qquad\text{for }&t<0\\
f^\beta(\psi^\beta(t,h^{\beta\alpha}(p)))\qquad\qquad &0<t<R\\
f^\gamma(\psi^\gamma(t,h^{\gamma\beta}\circ \psi^\beta_R\circ h^{\beta\alpha}(p)))&t>R.
\end{cases}
$$
The map $b_{p,R}$ is continuous and smooth outside $t=0,R$, and outside $t=0,R$ we see that $\frac{d}{dt}b_{p,R}\leq 0$. As in Proposition~\ref{prop:lyapunovpullback} we have limits $\lim_{t\rightarrow\pm\infty} b_{p,R}(t)=c$, where $c$ is the constant with $f^\gamma\bigr|_{S^\gamma}\equiv c$. It follows that $ S_{h^{\gamma\beta}\circ \psi_R^\beta\circ h^{\beta\alpha}}\subset S^\alpha$ and hence that the orbits through $p\in S_{h^{\gamma\beta}\circ \psi_R^\beta\circ h^{\beta\alpha}}$ are contained in $\Int N^\alpha$ and the orbits through $h^{\gamma\beta}\circ\psi_R^\beta\circ h^{\beta\alpha}(p)$ are contained in $\Int N^\gamma$ for all $R$. Thus $h^{\gamma\beta}\circ \psi^{\beta}_R\circ h^{\beta\alpha}$ is isolated for all $R\geq 0$. Perturbing everything to a transverse situation as before preserving isolation we get from Proposition~\ref{prop:localmorsefunctorial} functoriality in Morse-Conley-Floer homology.
\end{proof}

\section{Duality}
\label{sec:duality}
Given a chain complex $(C_*,\partial_*)$ the dual complex is defined via $C^k:=\Hom(C_k,\mZ)$ with  boundary operator $\delta^k:C^{k}\rightarrow C^{k+1}$ the pullback of the boundary operator $\partial_{k+1}:C_{k+1}\rightarrow C_{k}$. Thus for $\eta\in C^{k}$ we have $\delta^k \eta=\eta\circ \partial_{k+1}:C_{k+1}\rightarrow \mZ$. The homology of the dual complex $C^*$ is the \emph{cohomology} of $C_*$ and is denoted by $H^k(C_*):=\ker \delta ^{k}/\im \delta^{k-1}.$ 
\subsection{Morse cohomology and Poincar\'e duality}

If $A,B$ are oriented and cooriented submanifolds of a manifold $M$ that intersect transversely, the intersection $A\cap B$ is an oriented submanifold. If the ambient manifold $M$ is oriented, the exact sequence $0\rightarrow TA\rightarrow TM\rightarrow NA\rightarrow 0$ coorients $A$, and similarly orients $B$. The orientation on $B\cap A$, seen as an oriented submanifold, is related to the orientation of $A\cap B$ by the formula
\begin{equation}
\label{eq:reversingorientations}
\Or(B\cap A)=(-1)^{\dim A\cap B+\dim A\dim B}\Or(A\cap B).
\end{equation}
Now let $\cQ^\alpha=\{M^\alpha,f^\alpha,g^\alpha,\co^\alpha\}$ be a Morse datum on an \emph{oriented closed} manifold $M^\alpha$. We have the exact sequence of vector spaces
$$
\xymatrix{0\ar[r]&T_xW^u(x)\ar[r]&T_xM\ar[r]&N_xW^u(x)\ar[r]&0.}
$$
Because $T_xW^u(x)$ is oriented by $\co^\alpha$ and $T_xM$ is also oriented, this sequence orients $N_xW^u(x)\cong T_xW^s(x)$. The stable manifolds are therefore also oriented. The stable manifolds of $(f^\alpha,g^\alpha)$ are precisely the \emph{unstable manifolds} of $(-f^\alpha,g^\alpha)$ which we orient by the above exact sequence. We denote this choice of orientation of the unstable manifolds of $(-f^\alpha,g^\alpha)$ by $\widehat\co^\alpha$
\begin{definition}
Let $\cQ^\alpha$ be a Morse datum on an oriented manifold. The \emph{dual Morse datum} $\widehat\cQ^\alpha$ is defined by $\widehat \cQ^\alpha=\{M^\alpha,-f^\alpha, g^\alpha,\widehat \co^\alpha\}.$
\end{definition}

Under our compactness assumptions the Morse complex is finitely generated. Hence the dual complex is finitely generated. A basis of $C^*(\cQ^\alpha)$ is the dual basis, given for $x\in \crit f^\alpha$, by
$$\eta_x(y)=\begin{cases}
1\qquad &x=y\\
0\qquad &x\not=y
\end{cases}
$$
Note that a critical point of $f^\alpha$ of index $|x|$ is a critical point of $-f^\alpha$ of index $m^\alpha-|x|$. Define the Poincar\'e duality map $\PD_k:C_k(\cQ^\alpha)\rightarrow C^{m^\alpha-k}(\widehat\cQ^\alpha)$ by
$$
\PD_k(x)=\eta_x.
$$
As sets, $W(x,y;\cQ^\alpha)$ equals $W(y,x;\widehat\cO^\alpha)$. Using Equation~\bref{eq:reversingorientations} we see that $\Or(W(x,y;\cQ^\alpha))=(-1)\Or(W(y,x;\widehat\cO^\alpha))$. The negative gradient flow of $\cQ^\alpha$ is \emph{minus} the negative gradient flow of $\widehat \cQ^\alpha$. Quotienting out the induced $\mR$ actions we see that see that the minus sign dissappears and that $n(x,y;\cQ^\alpha)=n(y,x;\widehat \cQ^\alpha)$. Then
\begin{align*}
\delta^{n-k} \PD_k(x)=\eta_x\partial_{n-k+1}=&\sum_{|y;\widehat\cQ^\alpha|=|x;\widehat\cQ^\alpha|+1}n(y,x;\widehat\cQ^\alpha)\eta_y\\=&\sum_{|x;\cQ^\alpha|=|y;\cQ^\alpha|+1} n(x,y;\cQ^\alpha)\eta_y=\PD_{k-1}\partial_{k} x.
\end{align*}
\begin{theorem}
Let $\cQ^\alpha$ be a Morse datum on an oriented closed manifold. The Poincar\'e duality map is a chain map hence induces a map
$$
\PD_k:HM_k(\cQ^\alpha)\rightarrow HM^{m^\alpha-k}(\widehat \cQ^\alpha).
$$
The duality map is an isomorphism, and commutes with the canonical isomorphisms in the following way. If $\Phi^{\beta\alpha}_*:HM_*(\cQ^\alpha)\rightarrow HM_*(\cQ^\beta)$ and $(\widehat\Phi^{\beta\alpha})^*:HM^*(\widehat\cQ^\beta)\rightarrow HM^*(\widehat\cQ^\alpha)$ are canonical isomorphisms then
$$
\PD_k\Phi^{\beta\alpha}_k=(\widehat\Phi^{\beta\alpha})^{n-k}\PD_k.
$$
\label{thm:poincaremorse}
\end{theorem}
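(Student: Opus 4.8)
\emph{Proof plan.}

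The plan is to handle the three assertions in order; the chain-map property and the isomorphism claim are immediate from what precedes, and the compatibility with the canonical isomorphisms carries the real content. First, the chain-map property is exactly the displayed identity $\delta^{m^\alpha-k}\PD_k=\PD_{k-1}\partial_k$ verified just above, so $\PD_*$ descends to (co)homology. For the isomorphism: since $M^\alpha$ is closed the complexes are finitely generated, and a critical point of $f^\alpha$ of index $k$ is a critical point of $-f^\alpha$ of index $m^\alpha-k$; hence on the chain level $\PD_k:C_k(\cQ^\alpha)\rightarrow C^{m^\alpha-k}(\widehat\cQ^\alpha)=\Hom\bigl(C_{m^\alpha-k}(\widehat\cQ^\alpha),\mZ\bigr)$, $x\mapsto\eta_x$, carries the critical-point basis bijectively onto the dual basis, so $\PD_*$ is an isomorphism of (co)chain complexes and $\PD_k:HM_k(\cQ^\alpha)\rightarrow HM^{m^\alpha-k}(\widehat\cQ^\alpha)$ is an isomorphism.

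For compatibility with continuation I would work inside the $\mS^1$-model of the continuation map from Section~\ref{sec:homotopy}. A homotopy $(f^{\beta\alpha}_\lambda,g^{\beta\alpha}_\lambda)$ from $\cQ^\alpha$ to $\cQ^\beta$ is encoded, as in the proof of Proposition~\ref{prop:homotopy} and \cite[Equation~(2.11)]{rotvandervorst}, by the Morse function $F^\alpha(x,\mu)=f^{\beta\alpha}_{\omega(\mu)}(x)+r(1+\cos\pi\mu)$ on $M^\alpha\times\mS^1$, with the product metric and the orientation convention $\cO^\alpha=(\partial\mu\oplus\co^\alpha)\cup\co^\beta$, whose boundary operator has block form with $\Phi^{\beta\alpha}$ in the off-diagonal relative to the splitting $C_k(F^\alpha)\cong C_{k-1}(f^\alpha)\oplus C_k(f^\beta)$ of \bref{eq:splitting}. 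The key observation is that $-F^\alpha$, after a rotation of $\mS^1$ by half a period and an irrelevant additive constant, is again a function of this same type but with the two distinguished slices interchanged: it is the $\mS^1$-model of a continuation between the dual data $\widehat\cQ^\alpha$ and $\widehat\cQ^\beta$, so that the off-diagonal of its coboundary operator $\widehat\Delta$ computes the continuation cochain isomorphism $(\widehat\Phi^{\beta\alpha})^{*}$ of the statement (different homotopies yield chain-homotopic continuation maps, so this choice is harmless).

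Finally I would run the chain-level Poincar\'e duality argument already carried out for $(M^\alpha,f^\alpha)$ verbatim on $(M^\alpha\times\mS^1,F^\alpha)$: the moduli spaces of $F^\alpha$ and of $-F^\alpha$ coincide as sets, their orientations differ by the sign of \bref{eq:reversingorientations}, and that sign cancels against the reversal of the induced $\mR$-action on unparametrized orbits, exactly as in the computation of $n(x,y;\cQ^\alpha)=n(y,x;\widehat\cQ^\alpha)$ preceding the statement. This produces a (co)chain isomorphism $C_*(F^\alpha)\rightarrow C^{(m^\alpha+1)-*}(\widehat{F^\alpha})$ intertwining $\Delta^\alpha$ with $\widehat\Delta$; under \bref{eq:splitting} and its dual this map is block anti-diagonal, the degrees matching because a critical point at the index-raised slice has index $|\cdot|+1$, and its two nonzero blocks are, up to a fixed sign, the Poincar\'e duality maps on $M^\alpha$ and on $M^\beta$. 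Reading off one off-diagonal entry of the intertwining identity then yields $\PD_k\Phi^{\beta\alpha}_k=(\widehat\Phi^{\beta\alpha})^{m^\alpha-k}\PD_k$ up to chain homotopy, hence on (co)homology.

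The main obstacle is the orientation bookkeeping. One must align the fiber-first convention for the exact sequences of vector bundles, the product orientation $\cO^\alpha=(\partial\mu\oplus\co^\alpha)\cup\co^\beta$ on $M^\alpha\times\mS^1$ and its dual, the reversal formula \bref{eq:reversingorientations}, the half-period rotation of $\mS^1$, and the reversal of the gradient flow, so that the off-diagonal blocks match on the nose and no index-dependent sign survives --- precisely as in the (sign-free) boundary-operator computation preceding the statement. There is no new geometric input beyond Sections~\ref{sec:chainmap}--\ref{sec:homotopy}; I would pin the conventions down on a small example, such as $\mS^2$ with the height function and its negative, before committing to the general argument.
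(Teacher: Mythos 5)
Your plan is correct, and for the first two assertions it coincides with what the paper (implicitly) does: the chain-map identity is exactly the displayed computation preceding the theorem, and the isomorphism claim is immediate because $\PD_k$ carries the critical-point basis of $C_k(\cQ^\alpha)$ bijectively onto the dual basis of $C^{m^\alpha-k}(\widehat\cQ^\alpha)$, so it is already an isomorphism of complexes. The paper gives no argument at all for the compatibility with the canonical isomorphisms, so your third step is where you supply something the text omits, and the route you choose --- dualize the $\mS^1$ cone model of Proposition~\ref{prop:homotopy}, observe that $-F^\alpha$ is, after a half-period rotation and an additive constant, the cone model of a continuation between the dual data with the two distinguished slices exchanged, and read the claim off the off-diagonal block of the intertwining identity $\widehat\delta\circ\PD=\PD\circ\Delta^\alpha$ --- is sound and uses only machinery already present in Sections~\ref{sec:chainmap}--\ref{sec:homotopy}. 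Two remarks. First, since $\PD$ on $M^\alpha\times\mS^1$ is just the anti-diagonal basis relabelling, the block identity you extract is an exact chain-level identity; chain homotopies enter only when the particular homotopy $-f_{\omega(\mu+1)}$ is replaced by an arbitrary one defining the canonical isomorphism, which is harmless on homology. Second, your model naturally produces the dual of the continuation $\widehat\Phi^{\alpha\beta}:C_*(\widehat\cQ^\beta)\rightarrow C_*(\widehat\cQ^\alpha)$, i.e.\ a map $HM^{m^\alpha-k}(\widehat\cQ^\alpha)\rightarrow HM^{m^\alpha-k}(\widehat\cQ^\beta)$; this is in fact the only direction for which the asserted formula typechecks (as printed, the two sides of $\PD_k\Phi^{\beta\alpha}_k=(\widehat\Phi^{\beta\alpha})^{n-k}\PD_k$ do not compose), so your construction resolves rather than creates that ambiguity. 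The one genuinely open item is the orientation bookkeeping you flag yourself: one must check that the product orientation $(\partial\mu\oplus\co^\alpha)\cup\co^\beta$ and its dual restrict on each slice to the given $\co$ and $\widehat\co$ up to a sign that is uniform in the index, so that no index-dependent sign survives in the off-diagonal blocks; this is routine but must actually be done before the formula can be asserted on the nose.
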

Since the duality map commutes with the canonical isomorphisms, and the gradient flows of $\cQ^\alpha$ and $\widehat \cQ^\alpha$ are isolated homotopic, this gives duality of the Morse complex of the manifold, i.e. there is a Poincar\'e duality map $\PD_k:HM_k(M^\alpha)\rightarrow HM^{m^\alpha-k}(M^\alpha)$.

\subsection{Duality in local Morse homology}
Recall that a closed subset $C$ of a manifold $M$ is \emph{orientable} if there exist a continuous section in the orientation bundle over $M$, cf.~\cite[Chapter VI.7]{Bredon_Topology}. Let $\cP^\alpha$ be a local Morse datum. The local Morse datum is \emph{orientable} if $S^\alpha=\Inv(N^\alpha,\psi^\alpha)$ is orientable. A choice of section of the orientation bundle is an \emph{orientation} of $\cP^\alpha$. If $M^\alpha$ is an oriented manifold, all closed subsets are oriented, thus on an orientable manifold all local Morse data are orientable. 

If a local Morse datum $\cP^\alpha$ is oriented we can define the dual local Morse datum $\widehat \cP^\alpha=\{M^\alpha, -f^\alpha,g^\alpha,N^\alpha,\widehat \co^\alpha\}$ as before. Again we have Poincar\'e duality isomorphisms $\PD_k:HM_k(\cP^\alpha)\rightarrow HM^{m^\alpha-k}(\widehat \cP^\alpha)$. A crucial difference is now that the gradient flow of $\cP^\alpha$ is in general \emph{not} isolated homotopic to the gradient flow of $\cP^\alpha$, thus $HM^{m^\alpha-k}(\widehat \cQ^\alpha)\not\cong HM^{m^\alpha-k}(\cQ^\alpha)$. % Recall that if $(f,g,N)$ is a triple of a function, metric and isolating neighborhood of the flow we define its local Morse homology via $HM_*(f,g,N)=\varprojlim HM_*(\cP^\alpha)$ over all local Morse data that are isolated homotopic to the gradient flow of $(f,g)$ in $N$. It is readily seen that $N$ also isolates the gradient flow of $(-f,g)$. We have the following Poincar\'e duality statement

\begin{theorem}
Let $(f,g,N)$ be a triple of a function a metric and an isolating neighborhood of the gradient flow. Assume that  $S=\Inv(N,\psi)$ is oriented. Then there are Poincar\'e duality isomorphisms
$$
PD_k:HM_k(f,g,N)\rightarrow HM^{m^\alpha-k}(-f,g,N).
$$
\label{thm:poincarelocalmorse}
\end{theorem}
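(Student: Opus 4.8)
The plan is to deduce the statement from the Poincar\'e duality already available for a single oriented local Morse datum, together with the way the duality operation $\cP^\alpha\mapsto\widehat\cP^\alpha$ interacts with the inverse limits defining $HM_*(f,g,N)$ and $HM^*(-f,g,N)$.

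First, recall (or re-establish) the datum-level duality exactly as before Theorem~\ref{thm:poincaremorse}, the only change being that the orientation of the ambient manifold is replaced by a section of the orientation bundle over the isolated invariant set. If $\cP^\alpha=(M^\alpha,f^\alpha,g^\alpha,N^\alpha,\co^\alpha)$ is oriented, then a critical point of $f^\alpha$ in $N^\alpha$ of index $|x|$ is one of $-f^\alpha$ in $N^\alpha$ of index $m^\alpha-|x|$; since all critical points in $N^\alpha$ and all connecting orbits in $N^\alpha$ lie in $S^\alpha$ (cf.~\cite[Lemmas~3.1, 3.3]{rotvandervorst}), the chosen section orients $T_xM^\alpha$ at each such $x$, hence $N_xW^u(x)\cong T_xW^s(x)$, hence the local unstable manifolds of $-f^\alpha$, producing $\widehat\co^\alpha$ and $\widehat\cP^\alpha$. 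As sets $W_{N^\alpha}(x,y;\cP^\alpha)=W_{N^\alpha}(y,x;\widehat\cP^\alpha)$, and formula~\bref{eq:reversingorientations} together with the fact that the negative gradient flow of $-f^\alpha$ is the time-reversal of that of $f^\alpha$ gives $n_{N^\alpha,\text{loc}}(x,y;\cP^\alpha)=n_{N^\alpha,\text{loc}}(y,x;\widehat\cP^\alpha)$; hence $\PD_k(x)=\eta_x$ defines a chain isomorphism $C_k(\cP^\alpha)\to C^{m^\alpha-k}(\widehat\cP^\alpha)$ and $\delta^{m^\alpha-k}\PD_k=\PD_{k-1}\partial_k$. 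Applying the same set-equality to the local continuation moduli spaces --- which by the isolation results of Section~\ref{sec:functorialitylocalmorse} also remain inside the relevant isolated invariant sets --- shows that $\PD$ intertwines the canonical continuation isomorphisms of the data $\cP$ with the dualized ones of the data $\widehat\cP$, i.e.\ the local analogue of the last identity of Theorem~\ref{thm:poincaremorse}.

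Next, observe that $\cP^\alpha\mapsto\widehat\cP^\alpha$ is an involution on oriented local Morse data on $N$ that carries the inverse system defining $HM_*(f,g,N)$ onto the one defining $HM^*(-f,g,N)$. Indeed the negative gradient flow of $\widehat\cP^\alpha$ is the time-reversal of that of $\cP^\alpha$, and $\Inv(N,-\phi)=\Inv(N,\phi)$ for every flow $\phi$, so $N$ stays an isolating neighborhood under reversal and an isolated homotopy of gradient flows from $\psi^\alpha$ to the gradient flow of $(f,g)$ negates to an isolated homotopy from the gradient flow of $\widehat\cP^\alpha$ to the gradient flow of $(-f,g)$; the invariant set of $\widehat\cP^\alpha$ equals $S^\alpha$, so orientability is unaffected. (One may restrict all these inverse limits to orientable data: any two such are joined by a canonical isomorphism, and orientable data exist whenever $S$ is orientable --- e.g.\ a sufficiently small Morse--Smale perturbation, whose invariant set lies in a neighborhood of $S$ on which the orientation bundle is trivial.) Thus $\cP^\alpha$ runs over the indexing system of $HM_k(f,g,N)$ exactly when $\widehat\cP^\alpha$ runs over that of $HM^{m^\alpha-k}(-f,g,N)$, and by the first step the maps $\PD_k:HM_k(\cP^\alpha)\to HM^{m^\alpha-k}(\widehat\cP^\alpha)$ form a compatible family of isomorphisms between the two inverse systems. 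Therefore they induce an isomorphism $PD_k:HM_k(f,g,N)=\varprojlim HM_k(\cP^\alpha)\longrightarrow\varprojlim HM^{m^\alpha-k}(\widehat\cP^\alpha)=HM^{m^\alpha-k}(-f,g,N)$, as desired.

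The step I expect to require the most care is the compatibility of $\PD$ with the canonical isomorphisms, since --- unlike the closed case --- it must be verified for the \emph{local} continuation complexes; this comes down to checking that the continuation moduli spaces and the orientation data carried on them stay confined to the isolated invariant sets, so that the section of the orientation bundle over $S^\alpha$, rather than an orientation of $M^\alpha$, suffices. Once this (and the minor point that orientability propagates through the inverse system) is in place, everything else is a formal manipulation of inverse limits together with the line-by-line repetition of the sign computation carried out before Theorem~\ref{thm:poincaremorse}.
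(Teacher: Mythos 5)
Your proposal is correct and follows the same route the paper takes (the paper in fact leaves this proof implicit, relying on the datum-level duality $\PD_k:HM_k(\cP^\alpha)\to HM^{m^\alpha-k}(\widehat\cP^\alpha)$ established just before the theorem, its compatibility with the continuation isomorphisms as in Theorem~\ref{thm:poincaremorse}, and the observation that $\cP^\alpha\mapsto\widehat\cP^\alpha$ interchanges the inverse systems defining $HM_*(f,g,N)$ and $HM^*(-f,g,N)$ because time reversal preserves isolating neighborhoods and isolated homotopies). Your explicit verification that the relevant moduli spaces stay inside $S^\alpha$ and that orientability propagates through the inverse system fills in exactly the details the paper omits.
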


\subsection{Duality in Morse-Conley-Floer homology}

If $\phi$ is a flow, then the \emph{reverse flow} $\phi^{-1}$ defined
via $\phi^{-1}(t,x)=\phi(-t,x)$ is also a flow. The following duality
statement is analogous to a theorem for the Conley index, due to
McCord~\cite{McCord:1992vy}.

\begin{theorem}
Let $S$ be an oriented isolated invariant set of a flow $\phi$. Then
there are Poincar\'e duality isomorphisms
$$
PD_k:HI_k(S,\phi)\rightarrow HI^{m-k}(S,\phi^{-1}).
$$
\end{theorem}
\begin{proof}
If $f_\phi$ is a Lyapunov function for $(S,\phi)$ then $-f_\phi$ is a
Lyapunov function for $(S,\phi^{-1})$. Let $N$ be an isolating
neighborhood. Choosing a metric $g$, we get by Theorem~\ref{thm:poincarelocalmorse} an isomorphism
$HM_k(f_\phi,g,N)\rightarrow HM^{n-k}(-f_\phi,g,N)$. The Poincar\'e
isomorphisms commute with the continuation isomorphisms hence combine
to a Poincar\'e duality isomorphism as in the theorem.
\end{proof}

% Note that an isolated invariant set on an orientable manifold is always orientable. Similarly a manifold is always orientable with $\mZ/2\mZ$ coefficients, hence for Morse-Conley-Floer homology with $\mZ/2\mZ$ coefficients the above duality statement always holds.

\begin{remark}
There exist various algebraic structures on Morse-Conley-Floer homology. A Morse homological interpretation of the cross product in cohomology, along with functoriality, allows one to define cup products. Functoriality and duality allows one to define shriek maps which give rise to intersection products. Details are available in~\cite{Rot:ww}. Cap products are more intrincate. There do not exist cap products
$$
\frown : HI^k(S,\phi)\otimes HI_l(S,\phi)\rightarrow HI_{k-l}(S,\phi)
$$
However we can define cap products
$$
\frown : HI^k(S,\phi)\otimes HM_l(f,g,N)\rightarrow HI_{k-l}(S,\phi^{-1}),
$$
Where $(f,g)$ satisfies the following properties. $N$ is an isolating neighborhood of $S$ as well as the gradient flow of $f$. Moreover $f>0$ on $N$ and $f\bigr|_{\partial N}=0$.
\end{remark}

\appendix

\section{Transverse maps are generic}
\label{sec:generic}

\begin{theorem}
Let $\cQ^\alpha,\cQ^\beta$ be Morse-Smale triples. The set $\scrT(\cQ^\alpha,\cQ^\beta)$ is residual in the compact-open topology, i.e. it contains a countable intersection of open and dense sets.
\label{thm:genericity}
\end{theorem}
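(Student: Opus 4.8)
The plan is to run the standard parametric transversality argument, reduced to a countable family of open-and-dense conditions on compact pieces. Since $f^\alpha,f^\beta$ are Morse on compact manifolds, $\crit f^\alpha$ and $\crit f^\beta$ are finite, so $\scrT(\cQ^\alpha,\cQ^\beta)=\bigcap_{x,y}\{h\mid h|_{W^u(x)}\pitchfork W^s(y)\}$ is a \emph{finite} intersection over $(x,y)\in\crit f^\alpha\times\crit f^\beta$. Each $W^u(x)$ is diffeomorphic to an open disc; fix compact exhaustions $W^u(x)=\bigcup_{n\ge1}K^x_n$ with $K^x_n\subset K^x_{n+1}$. Put
$$
\mathcal{W}_n=\bigl\{\,h\in C^\infty(M^\alpha,M^\beta)\ \big|\ h|_{W^u(x)}\text{ is transverse to }W^s(y)\text{ at every }p\in K^x_n\text{ with }h(p)\in W^s(y),\ \text{for all }x,y\,\bigr\}.
$$
Then $\scrT(\cQ^\alpha,\cQ^\beta)=\bigcap_{n\ge1}\mathcal{W}_n$, and since $C^\infty(M^\alpha,M^\beta)$ with the weak topology is a Baire space, it suffices to show each $\mathcal{W}_n$ is open and dense; this already exhibits $\scrT(\cQ^\alpha,\cQ^\beta)$ as a countable intersection of open dense sets.

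\emph{Density.} Given $h$ and a neighborhood of it, cover each $K^x_n$ by finitely many coordinate balls $U^x_i\subset M^\alpha$ whose $h$-images lie in coordinate balls of $M^\beta$, choose bump functions $\beta^x_i$ supported in $U^x_i$, and let $h_b$ (for $b$ in a small ball $B\subset\mR^N$) be obtained from $h$ by adding $\sum_{x,i}\beta^x_i(\cdot)\,b^x_i$ in the codomain charts; all $h_b$ are $C^\infty$-close to $h$. For each $(x,y)$ the map $(p,b)\mapsto h_b(p)$ on $K^x_n\times B$ is a submersion (at $p\in K^x_n$ some $\beta^x_i(p)>0$, so the $b$-derivative is onto $T_{h_b(p)}M^\beta$), hence transverse to $W^s(y)$, so by the parametric transversality theorem~\cite[Theorem 2.7, page 79]{Hirsch} the set of $b$ with $h_b|_{W^u(x)}\pitchfork W^s(y)$ along $K^x_n$ has full measure. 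Intersecting these finitely many full-measure sets over $(x,y)$ produces arbitrarily small $b$ with $h_b\in\mathcal{W}_n$. (Alternatively one applies the Sard--Smale theorem to the projection $\{(p,h)\mid h(p)\in W^s(y)\}\to C^r(M^\alpha,M^\beta)$, which is Fredholm of index $|x|-|y|$.)

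\emph{Openness.} Suppose $h_j\to h\in\mathcal{W}_n$ but $h_j\notin\mathcal{W}_n$. By finiteness of critical points and compactness of $K^x_n$, after passing to a subsequence there are fixed $x,y$, points $p_j\in K^x_n$ with $h_j(p_j)\in W^s(y)$ at which transversality fails, and $p_j\to p\in K^x_n$, so $h(p)\in\overline{W^s(y)}$. In the Morse--Smale setting $\overline{W^s(y)}$ is the finite union of the stable manifolds $W^s(z)$, $|z|\ge|y|$, occurring as its strata; pick $z_0$ with $h(p)\in W^s(z_0)$. Since $h\in\mathcal{W}_n$ and $p\in K^x_n$, the map $h|_{W^u(x)}$ is transverse to $W^s(z_0)$ at $p$, and the local structure of $\overline{W^s(y)}$ near the stratum $W^s(z_0)$ (equivalently, the compactness/broken-orbit analysis for the relevant moduli spaces, cf.~Proposition~\ref{prop:compactness}) then forces $h|_{W^u(x)}$ to be transverse to $W^s(y)$ at every intersection point near $p$, contradicting $h_j(p_j)\to h(p)$. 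Hence $\mathcal{W}_n$ is open, and $\scrT(\cQ^\alpha,\cQ^\beta)=\bigcap_n\mathcal{W}_n$ is residual.

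\emph{Main obstacle.} The transversality step itself is routine; the genuine subtlety is that $W^u(x)$ is not compact and $W^s(y)$ is not closed, so naive openness of the transversality condition can fail when an intersection point escapes off the open end of $W^s(y)$. This is the reason for imposing transversality against \emph{all} stable manifolds simultaneously and for invoking the stratified structure of the closures $\overline{W^s(y)}$ (as finite unions of stable manifolds of higher-index critical points) — the one place where the compactness/gluing package of Morse homology is really needed — together with the exhaustion of $W^u(x)$ that makes Baire's theorem applicable.
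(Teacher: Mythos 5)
Your overall strategy -- reduce to a finite intersection over pairs $(x,y)$, exhaust $W^u(x)$ by compacta, and exhibit $\scrT(\cQ^\alpha,\cQ^\beta)$ as $\bigcap_n\mathcal{W}_n$ with each $\mathcal{W}_n$ open and dense -- is a legitimate and more self-contained route than the paper's, which instead cites Hirsch's transversality theorem for maps $W^u(x)\rightarrow M^\beta$ and pulls back its open dense sets along the continuous restriction map $\rho_{W^u(x)}:C^\infty(M^\alpha,M^\beta)\rightarrow C^\infty(W^u(x),M^\beta)$. Your density argument for $\mathcal{W}_n$ is fine. The gap is in the openness of $\mathcal{W}_n$. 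Because $W^s(y)$ is not closed and your $\mathcal{W}_n$ imposes transversality against \emph{all} of $W^s(y)$ over the compact set $K^x_n$, the escaping sequence $h_j(p_j)\rightarrow h(p)\in W^s(z_0)\subset\overline{W^s(y)}$ can only be ruled out if the tangent planes $T_{h_j(p_j)}W^s(y)$ converge (subsequentially) to a plane \emph{containing} $T_{h(p)}W^s(z_0)$ -- i.e.\ Whitney condition (a) for the stratification of $M^\beta$ by stable manifolds. That statement is true for Morse--Smale gradient flows, but it is a genuine $C^1$ assertion that follows from the $\lambda$-Lemma, not from Proposition~\ref{prop:compactness}, which is a purely $C^0$ compactness statement about limit points of moduli spaces and says nothing about convergence of tangent planes. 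As written, the crux of your openness claim is asserted rather than proved.

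The gap is avoidable by a small change in the setup: also exhaust each $W^s(y)$ by compacta $L^y_n\subset W^s(y)$ and define $\mathcal{W}_n$ to require transversality only at those $p\in K^x_n$ with $h(p)\in L^y_n$. Then any failing sequence has $h_j(p_j)\in L^y_n$, so the limit $h(p)$ lies in $L^y_n\subset W^s(y)$ itself, where $W^s(y)$ is an honest submanifold with continuously varying tangent planes, and openness is immediate; one still has $\scrT(\cQ^\alpha,\cQ^\beta)=\bigcap_n\mathcal{W}_n$, and the density argument is unchanged. This is exactly the mechanism hidden inside the residuality statement of \cite[Theorem 2.1(a)]{Hirsch} that the paper invokes: the open dense sets there are indexed by compact pieces of both the domain and the (non-closed) target submanifold. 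Alternatively, keep your definition of $\mathcal{W}_n$ but state and prove the Whitney-(a) property of the stable-manifold stratification from the $\lambda$-Lemma; that is the one nontrivial lemma your version of the argument actually needs.
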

\begin{proof}
Let $x\in\crit f^\alpha$ and $y\in \crit f^\beta$. We show that $$\scrT(x,y)=\{h\in C^\infty(M^\alpha,M^\beta)\,|\, h\Bigr|_{W^u(x)}\pitchfork W^s(y)\}$$ is residual, from which it follows that the set of transverse maps is residual. 

We first show density of $\scrT(x,y)$. The set 
$$\pitchfork(W^u(x),M^\beta; W^s(y))=\{h\in C^{\infty}(W^u(x),M^\beta)\,|\, h\pitchfork W^s(y)\},$$
 is residual, cf.~\cite[Theorem 2.1(a)]{Hirsch}, and by Baire's category theorem it is dense. We show that $h^{\beta\alpha}\in \pitchfork(M^\alpha,M^\beta;W^s(y))$ can be approximated in the compact-open topology by maps in $\scrT(x,y)$. Because all maps in $C^\infty(M^\alpha,M^\beta)$ can be approximated by such maps $h^{\beta\alpha}$ we get the required density. 

The unstable manifold $W^u(x)$ is contractible thus its normal bundle is contractible. We identify a neighborhood of $W^u(x)$ in $M^\alpha$ with $W^u(x)\times \mR^{n-|x|}$. By Parametric Transversality~\cite[Theorem 2.7 page 79]{Hirsch}, the set of $v\in \mR^{n-|x|}$ such that $h^{\beta\alpha}\bigr|_{W^u(x)\times \{v\}}\pitchfork W^s(y)$ is dense. For a given $v$ it is now possible to construct a flow whose time-$1$ map $\phi^v$ locally translates $W^u(x)\times\{0\}$ to $W^s(x)\times \{v\}$. Then $h^{\beta\alpha}$ can be approximated by $h^{\beta\alpha}\circ \phi^{v_k}$ with $v_k\rightarrow 0$. Thus $\scrT(x,y)$ is dense in $C^\infty(M^\alpha,M^\beta)$. 

We now argue that $\scrT(x,y)$ contains a countable intersection of open sets, from which it follows that this set is residual. Consider the restriction mapping $\rho_{W^u}:C^\infty(M^\alpha,M^\beta)\rightarrow C^\infty (W^u(x),M^\beta)$, which is continuous in both the weak and strong topology. Again the transversality theorem gives that the set of transverse maps $\pitchfork(W^u(x),M^\beta; W^s(y))$ is residual, that is $\pitchfork(W^u(x),M^\beta; W^s(y))\supset \bigcap_{k\in \mN}U_k$ with $U_k$ open and dense. Note that 
$$\scrT(x,y)=\rho^{-1}_{W^u(x)}\left(\pitchfork(W^u(x),M^\beta; W^s(y))\right)\supset \bigcap_{k\in \mN}\rho^{-1}_{W^u(x)}(U_k)$$ 
hence contains a countable intersection of open sets. By the previous reasoning $\scrT(x,y)$ is dense, hence the open sets must be dense as well, and $\scrT(x,y)$ is residual. 

Since there are only a countable number of critical points of $f^\alpha$ and $f^\beta$ it follows that $\scrT(\cQ^\alpha,\cQ^\beta)=\bigcap_{x\in \crit f^\alpha, y\in \crit f^\beta} \scrT(x,y)$ is residual.  
\end{proof}

\bibliographystyle{abbrv}
\bibliography{functoriality}

\end{sloppypar}
\end{document}